\newtheorem{thm}{Theorem}[section]
\newtheorem{cor}[thm]{Corollary}
\newtheorem{lem}[thm]{Lemma}
\newtheorem{prop}[thm]{Proposition}
\theoremstyle{definition}
\newtheorem{defn}[thm]{Definition}
\theoremstyle{remark}
\newtheorem{rem}[thm]{Remark}
\numberwithin{equation}{section}
\newcommand{\norm}[1]{\left\Vert#1\right\Vert}
\newcommand{\prode}[1]{\left\langle#1\right\rangle}
\newcommand{\paren}[1]{\left(#1\right)}
\newcommand{\Hil}{\mathcal{H}}
\newcommand{\Vsb}{\mathcal{V}}
\newcommand{\Wsb}{\mathcal{W}}
\newcommand{\N}{\mathbb{N}}
\newcommand{\e}{\epsilon}
\begin{document}

\title[Approximate oblique duality for fusion frames]{Approximate oblique duality for fusion frames}

\author[Jorge P. Díaz]{Jorge P. D\'{i}az$^{1}$}%

\author[Sigrid B. Heineken]{Sigrid B. Heineken$^{2}$}%
%%{Sigrid B. Heineken\\Departamento de Matem\'atica \\Facultad de Ciencias Exactas y Naturales\\
%%Universidad de Buenos Aires\\ Pabell\'on I\\ Pabell\'on I\\
%%C1428EGA C.A.B.A.\\ Argentina\\ IMAS, UBA-CONICET\\Argentina.}
%%\email{sheinek@dm.uba.ar}%

\author[Patricia M. Morillas]{Patricia M. Morillas$^{1,*}$\\\\ $^{1}$\textit{I\lowercase{nstituto de }M\lowercase{atem\'{a}tica }A\lowercase{plicada} S\lowercase{an} L\lowercase{uis, }UNSL-CONICET, E\lowercase{j\'{e}rcito de los }A\lowercase{ndes 950, 5700 }S\lowercase{an} L\lowercase{uis,} A\lowercase{rgentina} \\ $^2$ D\lowercase{epartamento de} M\lowercase{atem\'atica}, FCE\lowercase{y}N, U\lowercase{niversidad de }B\lowercase{uenos} A\lowercase{ires}, P\lowercase{abell\'on} I, C\lowercase{iudad }U\lowercase{niversitaria}, IMAS, UBA-CONICET, C1428EGA C.A.B.A., A\lowercase{rgentina}}}
%%\address{Patricia M. Morillas\\Instituto de Matem\'{a}tica Aplicada San Luis\\
%%         UNSL-CONICET\\
%%         and Departamento de Matem\'{a}tica\\
%%         FCFMyN, UNSL\\
%%         Ej\'{e}rcito de los Andes 950, 5700 San Luis, Argentina}%
%%\email{morillas@unsl.edu.ar}%
%%
%
%%\date{}%
%%\dedicatory{}%
%%\commby{}%
%
\thanks{* Corresponding author. E-mail address: morillas.unsl@gmail.com\\
\textit{E-mail addresses:} jpdiaz1179@gmail.com (J. P. Díaz),
sheinek@dm.uba.ar (S. B. Heineken), morillas.unsl@gmail.com (P. M.
Morillas).}
%% ---------------------------------------------------------------
%

\begin{abstract}
Fusion frames are a convenient tool in applications where we deal with a large amount of data or when a combination of local data is needed. Oblique dual fusion frames are suitable in situations where the analysis for the data and its subsequent synthesis have to be implemented in different subspaces of a Hilbert space. These procedures of analysis and synthesis are in general not exact, and also there are circumstances where the exact dual is not available or it is necessary to improve its properties. To resolve these questions we introduce the concept of approximate oblique dual fusion frame, and in particular of approximate oblique dual fusion frame system. We study their properties. We give the relation to approximate oblique dual frames. We provide methods for obtaining them. We show how to construct other duals from a given one that give reconstructions errors as small as we want.

\bigskip

\bigskip

{\bf Keywords:} Fusion frames; Hilbert spaces; Approximate duals; Oblique duals; Analysis and synthesis procedures.

\medskip

{\bf Mathematics Subject Classification 2020:} 42C15, 42C40,
46C05.

\end{abstract}

\maketitle

% ----------------------------------------------------------------
\section{Introduction}

A fusion frame \cite{Casazza-Kutyniok (2004), Casazza-Kutyniok-Li (2008), KBCHHMS (2023)} is a collection of closed subspaces of a separable Hilbert space together with a sequence of weights. They are a generalization of frames \cite{Christensen (2016), Kovacevic-Chebira (2008)}. Frames and fusion frames play a central role in applications where it is required to have representations of vectors in a separable Hilbert space that are redundant (or equivalently, not unique), robust to noises
and erasures, and stable against perturbations. Fusion frames turned out to be particularly useful in situations with distributed processing requirements that cannot be suitably handled by classical frames. For example, they appear as a tool to overcome the difficulty of having a large amount of data. They are also used when there are collector nodes that provide overlapping local data about some object, and this data is gathered to obtain information about the object. Both situations share the following common structure: the \emph{synthesis process} that fuses each portion of data or local data that is obtained by the \emph{analysis process} in the form of fusion frame coefficients. The analysis and synthesis are implemented with a pair of dual fusion frames \cite{Heineken-Morillas-Benavente-Zakowicz (2014), Heineken-Morillas (2014)}. Oblique dual fusion frames \cite{Heineken-Morillas (2018)} are used when we need to work in different subspaces for the analysis and the synthesis procedures. Applications of fusion frames include wireless sensor networks \cite{Iyengar-Brooks 2012}, visual and hearing systems \cite{Rozell-Johnson (2006)}, geophones in geophysics \cite{Craig-Genter (2006)}, quantum communications \cite{Bodmann-Kribs-Paulsen (2007)}, filter banks \cite{Chebira-Fickus-Mixon (2011)}, signal processing \cite{Chen-Powell (2016), Gao-Krahmer-Powell (2021), Xia-Li (2017)}, image processing \cite{Li-Yao-Yi (2012)}, and much more.

In the implementation of oblique dual fusion frames we have limitations and computational difficulties to overcome:

- In some cases, the subspaces are determined by the particular problem and the oblique dual fusion frame is unique. This unique oblique dual fusion frame could not have the adequate properties and it is necessary to improve it.

- In a computer, we work with finite sequences of numbers, hence it is mandatory to truncate the series of the fusion frame representations. 

- The fusion frame coefficients come from measures and thus they are not exact. They are also subject to quantization errors.

- The duals can in general only be computed approximately. Moreover, it may be not possible to
give an analytic expression of it.

In this paper, we attack these difficulties introducing approximate oblique duality for fusion frames, and in particular for fusion frames systems, i.e. fusion frames with generating frame systems in each subspace. In general, the duality theory for
fusion frames is much more elaborate than for the standard frames. This is why in this work we need to use additional tools to overcome the obstacles that emerge and the results are more intricate.

We organize the paper in the following way:

In Section 2, we present the notation and preliminary results about operators, oblique projections, frames and fusion frames.

In Section 3, we introduce approximate oblique duality for fusion frames and fusion frame systems. We obtain some fundamental properties. We establish the link to approximate oblique dual frames. We set requirements for proper analysis and approximate synthesis procedures, namely, uniqueness and approximate consistency for the sampling. We analyse these requirements in relation to approximate oblique duality.

In Section 4, we give methods for obtaining approximate oblique dual fusion frames and approximate oblique dual fusion
frame systems. We study the behaviour of the approximate oblique duals under the perturbations of local frames in each subspace and of the entire fusion frame.

In Section 5, starting from a fusion frame and an approximate oblique dual of it, we construct a sequence of other approximate oblique duals that give reconstructions as close as we desire. We also construct (exact) oblique dual fusion frames from approximate oblique dual fusion frames.

% ----------------------------------------------------------------
\section{Preliminaries}

We first introduce the notation and later we will review some
definitions and present properties that we will need later.

\subsection{Notation}

Consider $\mathcal{H}, \mathcal{K}$ separable Hilbert spaces over
$\mathbb{F}=\mathbb{R}$ or $\mathbb{F}=\mathbb{C}$. We denote by $B(\mathcal{H},\mathcal{K})$ the space of
bounded operators from $\mathcal{H}$ to $\mathcal{K}$. For $A \in
B(\mathcal{H},\mathcal{K})$ we denote the image, the null space and
the adjoint of $A$ by  ${\rm Im}(A)$, ${\rm Ker}(A)$ and $A^{\ast}$,
respectively. If $A$ has closed range we write $A^{\dagger}$ for the Moore-Penrose pseudo-inverse of $A.$ The
inner product and the norm in $\mathcal{H}$ will be denoted by
$\langle \cdot ,\cdot \rangle$ and $\|\cdot\|$, respectively.

Different from what occurs with frames, fusion frames do not behave well under the action of operators \cite{Ruiz-Stojanoff (2008)}. For this reason we recall the concept of reduced minimum modulus that will help us with this question when it arises. If $A \in B(\mathcal{H},\mathcal{K})$, $\gamma(A)$ is the reduced minimum modulus of $A$, i.e.,
$\gamma(A)=\text{inf}\{\|Ax\|:\|x\|=1, x \in {\rm Ker}(A)^{\perp}\}.$ We have $\gamma(A) > 0$ if and only if $A$ has a closed non zero range.

Let $I$ be a countable index set and $\{\mathcal{H}_{i}\}_{i \in I}$
be a sequence of Hilbert spaces. Then

\centerline{$\oplus_{i \in I}\mathcal{H}_{i} = \{(f_{i})_{i \in
I}:f_{i} \in \mathcal{H}_{i} \text{ and } \{\|f_{i}\|\}_{i \in I}
\in \ell^{2}(I)\}$}

\noindent is a Hilbert space with inner product $\langle(f_{i})_{i \in I},(g_{i})_{i
\in I}\rangle=\sum_{i \in I}\langle f_{i}, g_{i}\rangle.$

For $J \subseteq I$ let $\chi_{J} : I \rightarrow \{0, 1\}$ be the
characteristic function of $J.$ We abbreviate
$\chi_{\{j\}}=\chi_{j}$.

Throughout this work, $\mathcal{W}$ and $\mathcal{V}$ will be two closed
subspaces of $\mathcal{H}$.

\subsection{Oblique projections and left inverses}

Assume that $\mathcal{H}=\mathcal{V} \oplus
\mathcal{W}^{\perp} $. By \cite[Lemma 2.1]{Christensen-Eldar (2004)}
this is equivalent to $\mathcal{H}=\mathcal{W} \oplus
\mathcal{V}^{\perp}$. The oblique projection onto $\mathcal{V}$ along
$\mathcal{W}^{\perp}$, is the unique operator that satisfies 

\centerline{$\pi_{\mathcal{V},\mathcal{W}^{\perp}}f=f$, for all $f \in \mathcal{V}$,$\,\,\,\,\,\,\,\,\,\,\pi_{\mathcal{V},\mathcal{W}^{\perp}}f=0$ for
all $f \in \mathcal{W}^{\perp}$.}

\noindent Equivalently, ${\rm
Im}(\pi_{\mathcal{V},\mathcal{W}^{\perp}})=\mathcal{V}$ and ${\rm
Ker}(\pi_{\mathcal{V},\mathcal{W}^{\perp}})=\mathcal{W}^{\perp}$. We have $\left(\pi_{\Vsb\Wsb^{\perp}}\right)^{\ast} =
\pi_{\Wsb\Vsb^{\perp}}$ and
$(\pi_{\Vsb\Wsb^{\perp}})_{\mid\Vsb}=I_{\Vsb}$. If
$\mathcal{W}=\mathcal{V}$ we obtain the orthogonal projection onto
$\mathcal{W}$, denoted by $\pi_{\mathcal{W}}$.

For $\mathcal{H}=\mathcal{V} \oplus
\mathcal{W}^{\perp}$ and $T \in B(\mathcal{H},\mathcal{K})$ such that
${\rm Ker}(T)=\mathcal{W}^{\perp}$, we denote by $\mathfrak{L}_{T,\epsilon}^{\mathcal{V},\mathcal{W}^{\perp}}$ the set of $\epsilon$-approximate oblique left inverses of $T$ on $\mathcal{V}$
along $\mathcal{W}^{\perp}$ which image is equal to $\mathcal{V}$,
i. e.,

\centerline{$\mathfrak{L}_{T,\epsilon}^{\mathcal{V},\mathcal{W}^{\perp}}=\{U
\in
B(\mathcal{K},\mathcal{H}):||UT-\pi_{\mathcal{V},\mathcal{W}^{\perp}}||
\leq \epsilon \text{ and } {\rm Im}(U)=\mathcal{V}\}$.}

If $\mathcal{V}=\mathcal{W}$, we write
$\mathfrak{L}_{T,\epsilon}^{\mathcal{W}}=\{U\in
B(\mathcal{K},\mathcal{H}):||UT-\pi_{\mathcal{W}}||
\leq \epsilon \text{ and } {\rm Im}(U)=\mathcal{W}\}$, for the set of $\epsilon$-approximate left inverses of $T$ on
$\mathcal{W}$ such that ${\rm Im}(U)=\mathcal{W}$.

\subsection{Frames}

Frames have been introduced by Duffin and Schaeffer in
\cite{Duffin-Schaeffer (1952)}. Frames give representations of elements in a Hilbert space that are not necessarily unique. This redundancy makes them
useful for applications in many different areas, such as signal
processing, coding theory, communication theory, sampling theory and
quantum information.

The concept of frame for a closed subspace of
$\mathcal{H}$ is the following:

\begin{defn}\label{D frame}
Let $\{f_{i}\}_{i \in I} \subset \mathcal{W}$. Then $\{f_{i}\}_{i \in
I}$ is a \emph{frame} for $\mathcal{W}$, if there exist constants $0
< \alpha \leq \beta < \infty$ such that
\begin{equation}\label{E cond frame}
\alpha\|f\|^{2} \leq \sum_{i \in I}|\langle f,f_{i}\rangle |^{2}
\leq \beta\|f\|^{2}  \text{ for all $f \in \mathcal{W}$.}
\end{equation}
\end{defn}

If the right inequality in (\ref{E cond frame}) holds, then
$\{f_{i}\}_{i \in I}$ is a {\it Bessel sequence} for $\mathcal{W}$. We call the constants $\alpha$ and $\beta$ {\it frame bounds}. When $\alpha=\beta,$ we say that $\{f_{i}\}_{i \in I}$ is an {\it
$\alpha$-tight frame}, and if $\alpha=\beta=1$ that it is a {\it Parseval
frame} for $\mathcal{W}$.

We have the following bounded operators associated to a Bessel sequence $\mathcal{F}=\{f_{i}\}_{i \in I} \subseteq \mathcal{W}$:

- the {\it synthesis operator} $T_{\mathcal{F}}:\ell^2(I)\rightarrow \mathcal{H},$
$T_{\mathcal{F}}\{c_i\}_{i\in I}=\sum_{i\in I}c_if_i,$

- the {\it analysis operator} $T_{\mathcal{F}}^{\ast}: \mathcal{H}\rightarrow \ell^2(I)$,
$T_{\mathcal{F}}^{\ast}f=\{\langle f,f_i\rangle \}_{i\in I},$

- the {\it frame operator} $S_{\mathcal{F}}=T_{\mathcal{F}}T_{\mathcal{F}}^{\ast}$.

\noindent We say that a Bessel sequence $\mathcal{F}=\{f_{i}\}_{i \in I}$ for
$\mathcal{W}$ is a frame for $\mathcal{W}$ if and only ${\rm
Im}(T_{\mathcal{F}})=\mathcal{W}$, or equivalently,
${S_{\mathcal{F}}}_{|\Wsb}$ is invertible.
Additionally, $\mathcal{F}$ is an $\alpha$-tight frame for
$\mathcal{W}$ if and only if $S_{\mathcal{F}}=\alpha
\pi_{\mathcal{W}}$.

When $\mathcal{W}$ is finite-dimensional we will contemplate
finite frames for it, i.e., frames with a finite number of elements.
We point out that if $\text{dim}(\mathcal{W})<\infty$ and
$|I|<\infty$, then $\{f_{i}\}_{i \in I} \subset \mathcal{H}$ is a
frame for $\mathcal{W}$ if and only if $\text{span}\{f_{i}\}_{i \in
I}=\mathcal{W}$.

We refer the reader to
\cite{Christensen (2016), Kovacevic-Chebira
(2008)} for more details about frames. Oblique dual frames  \cite{Eldar (2003a),
Eldar (2003b), Eldar-Werther (2005), Li-Lian
(2011), Xiao-Zhu-Zeng (2013)} are defined in the following way:

\begin{defn}\label{D oblique dual frame}
Let $\mathcal{H}=\mathcal{V}\oplus
\mathcal{W}^{\perp}$. Let $\mathcal{F}=\{f_{i}\}_{i \in I}$ be a
frame for $\mathcal{W}$ and $\mathcal{G}=\{g_{i}\}_{i \in I}$ be a
frame for $\mathcal{V}$. If $T_{\mathcal{G}}T_{\mathcal{F}}^{\ast}=\pi_{\mathcal{V},\mathcal{W}^{\perp}},$ we say that $\mathcal{G}$ is an {\em oblique dual frame} of $\mathcal{F}$ on $\mathcal{V}$.
\end{defn}

We call
$\{\pi_{\mathcal{V},\mathcal{W}^{\perp}}S_{\mathcal{F}}^{\dagger}f_{i}\}_{i
\in I}$ the \emph{canonical oblique dual frame} of $\{f_{i}\}_{i
\in I}$ on $\mathcal{V}$.

\begin{rem}
If $\mathcal{V}=\mathcal{W}$ we have the classical duals and we say dual frame instead of oblique dual frame on
$\mathcal{W}$.
\end{rem}

A frame for $\mathcal{W}$ which is also a basis is called a {\it Riesz basis} for $\mathcal{W}$. Note that a Riesz basis has a unique dual,
the canonical one.

\subsection{Fusion frames}

In \cite{Casazza-Kutyniok (2004)} fusion frames were introduced under the name of \textit{frames of
subspaces}. They are an excellent instrument for dealing with different situations in sensor networking, distributed processing, etc.
In the following we will work with fusion frames for closed
subspaces of $\mathcal{H}$.

Suppose that $\{W_{i}\}_{i \in I}$ is a family of closed subspaces in
$\mathcal{W}$, with $\{w_{i}\}_{i \in I}$ a family of weights, i.e.,
$w_{i}
> 0$ for all $i \in I$. We write $\mathbf{W}$ for $\{W_{i}\}_{i \in I}$, $\mathbf{w}$ for $\{w_{i}\}_{i
\in I}$, and $(\mathbf{W},\mathbf{w})$ for
$\{(W_i,w_{i})\}_{i \in I}$. For $A \in B(\mathcal{H},\mathcal{K})$ we denote $\{(AW_i, w_{i})\}_{i \in I}$ with $(A\mathbf{W},\mathbf{w})$.

Let us consider the Hilbert space $\mathcal{K}_{\mathbf{W}}=\oplus_{i
\in I}W_{i}$.

\begin{defn}\label{D fusion frame}
We say that $(\mathbf{W},\mathbf{w})$ is a {\it fusion frame} for
$\mathcal{W}$, if there exist constants $0 < \alpha \leq \beta <
\infty$ such that
\begin{equation}\label{E cond ff}
\alpha\|f\|^{2} \leq \sum_{i \in I}w_{i}^{2}\|\pi_{W_{i}}(f)\|^{2}
\leq \beta\|f\|^{2}  \text{ for all $f \in \mathcal{W}$.}
\end{equation}
\end{defn}

We name $\alpha$ and $\beta$ the \textit{fusion frame bounds}. The collection $(\mathbf{W},\mathbf{w})$ is an $\alpha$-\textit{tight
fusion frame} for $\mathcal{W}$, if in (\ref{E cond ff}) we can take $\alpha$ and $\beta$ such that $\alpha =
\beta$, and a \textit{Parseval fusion frame} for $\mathcal{W}$
if $\alpha = \beta = 1.$ The family $(\mathbf{W},\mathbf{w})$ is called a \textit{fusion Bessel sequence} for $\mathcal{W}$ with
fusion Bessel bound $\beta$  if it has
an upper fusion frame bound, but not necessarily a lower bound. When $w_{i}=c$ for all $i\in I,$ we use the notation
$\mathbf{w}=c$. If $\mathcal{W}$ is the direct sum of the $W_i$ we call $(\mathbf{W},{\bf w})$ a \textit{Riesz fusion basis} for
$\mathcal{W}$. A fusion frame that is not a Riesz
fusion basis is an overcomplete fusion frame. A fusion frame
$(\mathbf{W},1)$ is called an \textit{orthonormal fusion basis} for
$\mathcal{W}$ if $\mathcal{W}$ is the orthogonal sum of the
subspaces $W_{i}.$

Given a fusion Bessel sequence $(\mathbf{W},\mathbf{w})$ for
$\mathcal{W},$ we  associate the following bounded operators:

- the \textit{synthesis operator} $T_{\mathbf{W},\mathbf{w}} : \mathcal{K}_{\mathbf{W}}
\rightarrow  \mathcal{H},$
$\,\,\,T_{\mathbf{W},\mathbf{w}}(f_{i})_{i \in I}=\sum_{i \in
  I}w_{i}f_{i},$

- the \textit{analysis operator} $T_{\mathbf{W},\mathbf{w}}^{\ast} : \mathcal{H} \rightarrow
\mathcal{K}_{\mathbf{W}},$
$\,\,\,T_{\mathbf{W},\mathbf{w}}^{\ast}f=(w_{i}\pi_{W_{i}}(f))_{i \in
I}$,

- the \textit{fusion frame operator} $S_{\mathbf{W},\mathbf{w}}=T_{\mathbf{W},\mathbf{w}}T_{\mathbf{W},\mathbf{w}}^{\ast}.$

\noindent Similar to what happens for frames, $(\mathbf{W},\mathbf{w})$ is a fusion Bessel sequence for $\mathcal{W}$ if and only if
$T_{\mathbf{W},\mathbf{w}}$ is a well defined bounded linear
operator. A fusion Bessel sequence $(\mathbf{W},\mathbf{w})$  for
$\mathcal{W}$ is a fusion frame for $\mathcal{W}$ if and only if
${\rm Im}(T_{\mathbf{W},\mathbf{w}})=\mathcal{W}$, or equivalently,
${S_{\mathbf{W},\mathbf{w}}}_{|\Wsb}$ is invertible. Furthermore, $(\mathbf{W},\mathbf{w})$ is an
$\alpha$-tight fusion frame for $\mathcal{W}$ if and only if
$S_{\mathbf{W},\mathbf{w}}=\alpha \pi_{\mathcal{W}}.$

For finite-dimensional subspaces $\mathcal{W}$ we will consider
finite fusion frames, i.e., fusion frames with a finite set of
indices. Observe that if $\text{dim}(\mathcal{W})<\infty$ and
$|I|<\infty$, then $(\mathbf{W},\mathbf{w})$ is a frame for
$\mathcal{W}$ if and only $\text{span}\cup_{i \in
I}W_{i}=\mathcal{W}$.

As was noted in \cite[Remark
2.4]{Heineken-Morillas-Benavente-Zakowicz (2014)}, if $w \in
\ell^{2}(I)$, then $(\mathcal{W},w)$ is a fusion Bessel sequence for
$\mathcal{H}$, and if $T_{\mathcal{W},w}$ is a well defined bounded
operator then $w \in \ell^{\infty}(I)$. In view of this, in the
sequel we suppose that each family of weights is in
$\ell^{\infty}(I)$.

Fusion frames allow local processing in each of the
subspaces. Considering this, having a set of local frames for its
subspaces is convenient.

\begin{defn}
Let $(\mathbf{W},{\bf w})$ be a fusion frame (\emph{fusion Bessel sequence}) for $\mathcal{W}$, and let $\{f_{i,l}\}_{l\in L_i}$ be a
frame for $W_{i}$ for $i \in I$. Then $\{(W_i,w_i,\{f_{i,l}\}_{l\in
L_i})\}_{i \in I}$ is called a \emph{fusion frame system}
(\emph{fusion Bessel system}) for $\mathcal{W}$.
\end{defn}

In what follows we will use the notation
$\mathcal{F}_{i}=\{f_{i,l}\}_{l \in L_i}$,
$\mathcal{F}=\{\mathcal{F}_{i}\}_{i \in I},$ ${\bf
w}\mathcal{F}=\{w_{i}\mathcal{F}_{i}\}_{i \in I},$ and we write
$(\mathbf{W}, {\bf w}, \mathcal{F})$ for
$\{(W_i,w_i,\{f_{i,l}\}_{l\in L_i})\}_{i \in I}$. If $A \in
B(\mathcal{H},\mathcal{K})$ we write $A\mathcal{F}$ for
$\{\{A{f}_{i,l}\}_{l\in L_i}\}_{i \in I}$ and $A\mathcal{F}_{i}$ for
$\{Tf_{i,l}\}_{l \in L_i}$.

\begin{thm}\cite[Theorem 3.2]{Casazza-Kutyniok (2004)}\label{T wF marco sii WwF fusion frame system}
Given $(\mathbf{W},\mathbf{w})$, let $\mathcal{F}_{i}$ be a frame for
$W_{i}$ with frame bounds $\alpha_{i}, \beta_{i}$ such that $0 <
\alpha ={\rm inf}_{i \in I}\alpha_{i} \leq {\rm sup}_{i \in
I}\beta_{i}= \beta < \infty$. The following assertions are
equivalents:
\begin{enumerate}
  \item $\mathbf{w}\mathcal{F}$ is a frame for $\mathcal{W}$.
  \item $(\mathbf{W},\mathbf{w})$ is a fusion frame for
  $\mathcal{W}$.
\end{enumerate}
If $(\mathbf{W},\mathbf{w})$ is a fusion frame for $\mathcal{W}$
with fusion frame bounds $\gamma$ and $\delta$, then
$\mathbf{w}\mathcal{F}$ is a frame for $\mathcal{W}$ with frame
bounds $\alpha \gamma$ and $\beta \delta$. If
$\mathbf{w}\mathcal{F}$ is a frame for $\mathcal{W}$ with frame
bounds $\gamma$ and $\delta$, then $(\mathbf{W}, {\bf w})$ is a
fusion frame for $\mathcal{W}$ with fusion frame bounds
$\frac{\gamma}{\beta}$ and $\frac{\delta}{\alpha}$. The previous assertions are valid if we replace fusion frame by
fusion Bessel sequence and consider only the upper bounds.
\end{thm}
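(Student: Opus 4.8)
The plan is to work at the level of the synthesis operators and relate $T_{\mathbf{w}\mathcal{F}}$ to $T_{\mathbf{W},\mathbf{w}}$ via the operators $T_{\mathcal{F}_i}$, since both ``frame'' and ``fusion frame'' are characterized in the excerpt by the synthesis operator being bounded (Bessel case) and surjective onto $\mathcal W$ (frame case). First I would set up the decomposition $\ell^2(\sqcup_i L_i)\cong \oplus_{i\in I}\ell^2(L_i)$ and observe that, under this identification, $T_{\mathbf{w}\mathcal{F}}(c_{i,l})_{i,l}=\sum_{i}w_i\sum_{l}c_{i,l}f_{i,l}=\sum_i w_i T_{\mathcal{F}_i}(c_{i,l})_{l}$. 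Since each $\mathcal{F}_i$ is a frame for $W_i$ with bounds $\alpha_i,\beta_i$, we have $\alpha_i\,\mathrm{id}_{W_i}\le S_{\mathcal{F}_i}{}_{|W_i}\le \beta_i\,\mathrm{id}_{W_i}$, hence $\sqrt{\alpha_i}\|\pi_{W_i}f\|\le \|T_{\mathcal{F}_i}^\ast f\|\le\sqrt{\beta_i}\|\pi_{W_i}f\|$, and with the uniform bounds $0<\alpha\le\alpha_i\le\beta_i\le\beta<\infty$ this gives the two-sided comparison $\alpha\|\pi_{W_i}f\|^2\le \|T_{\mathcal{F}_i}^\ast f\|^2\le\beta\|\pi_{W_i}f\|^2$ for every $i$ and every $f\in\mathcal H$.

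Next I would compute the analysis operator of $\mathbf{w}\mathcal{F}$: for $f\in\mathcal W$,
\begin{equation*}
\sum_{i\in I}\sum_{l\in L_i}|\langle f,w_if_{i,l}\rangle|^2=\sum_{i\in I}w_i^2\|T_{\mathcal{F}_i}^\ast f\|^2,
\end{equation*}
and then sandwich this between $\alpha\sum_i w_i^2\|\pi_{W_i}f\|^2$ and $\beta\sum_i w_i^2\|\pi_{W_i}f\|^2$ using the pointwise estimate from the previous step. This single chain of inequalities simultaneously proves: (i) the equivalence $(1)\Leftrightarrow(2)$, because the middle quantity $\sum_i w_i^2\|\pi_{W_i}f\|^2$ is exactly the fusion-frame expression in \eqref{E cond ff} — so an upper/lower bound for one side is, up to the factors $\alpha,\beta$, an upper/lower bound for the other; (ii) the quantitative bound $\alpha\gamma,\beta\delta$ for $\mathbf{w}\mathcal{F}$ when $(\mathbf{W},\mathbf{w})$ has fusion bounds $\gamma,\delta$ (substitute $\gamma\|f\|^2\le\sum_i w_i^2\|\pi_{W_i}f\|^2\le\delta\|f\|^2$); and (iii) the bounds $\gamma/\beta,\delta/\alpha$ for $(\mathbf{W},\mathbf{w})$ when $\mathbf{w}\mathcal{F}$ has frame bounds $\gamma,\delta$ (divide the displayed inequalities by $\beta$ and $\alpha$ respectively). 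The Bessel statement is read off the right-hand inequality alone, ignoring all lower bounds.

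The main obstacle I anticipate is purely bookkeeping rather than conceptual: making the identification $\ell^2(\sqcup_i L_i)\cong\oplus_{i\in I}\ell^2(L_i)$ precise and checking that $T_{\mathbf{w}\mathcal{F}}$ is genuinely well defined and bounded on all of $\ell^2(\sqcup_i L_i)$ — this is where the uniform Bessel bound $\beta=\sup_i\beta_i<\infty$ is essential, since without it the series defining $T_{\mathbf{w}\mathcal{F}}$ need not converge. One must also be a little careful that ``$\mathbf{w}\mathcal{F}$ is a frame for $\mathcal W$'' requires $\overline{\mathrm{span}}\,\{w_if_{i,l}\}=\mathcal W$; this follows because $\mathrm{span}\{f_{i,l}:l\in L_i\}$ is dense in $W_i$ and, once the fusion-frame lower bound is available, $\overline{\mathrm{span}}\cup_i W_i=\mathcal W$. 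Everything else is the elementary manipulation of the inequality chain described above, carried out once and then specialized.
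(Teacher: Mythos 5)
Your proposal is correct and is essentially the standard sandwich argument $\alpha\sum_i w_i^2\|\pi_{W_i}f\|^2\le\sum_{i,l}|\langle f,w_if_{i,l}\rangle|^2\le\beta\sum_i w_i^2\|\pi_{W_i}f\|^2$ used in the cited source (Casazza--Kutyniok, Theorem 3.2), which the paper quotes without reproving. The only superfluous worry is the spanning condition at the end: by Definition~\ref{D frame} the frame inequality is only required for $f\in\mathcal W$ and all $w_if_{i,l}$ already lie in $\mathcal W$, so the lower bound alone settles completeness.
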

We
refer the reader to \cite{Casazza-Kutyniok (2004),
Casazza-Kutyniok-Li (2008), Casazza-Fickus-Heinecke-Wang-Zhou (2012)} for more details about fusion frames and fusion frame systems.

Consider the selfadjoint
operator $M_{J,\mathbf{W}} : \mathcal{K}_{\mathbf{W}} \rightarrow
\mathcal{K}_{\mathbf{W}}, M_{J,\mathbf{W}}(f_i)_{i \in
I}=(\chi_{J}(i)f_{i})_{i \in I}.$ We only write $M_{J}$ if it clear
to which $\mathbf{W}$ we refer to. We write $M_{\{j\},\mathbf{W}}=M_{j,\mathbf{W}}$ and $M_{\{j\}}=M_{j}$. We note that if $A\in B(\mathcal{H},\mathcal{K})$, then $AW_{j}=\{A T_{\mathbf{W},{\bf
w}} M_j (f_{i})_{i \in I} : (f_{i})_{i \in I} \in
\mathcal{K}_{\mathbf{W}}\}$.

\begin{defn}Let $Q \in B(\mathcal{K}_{\mathbf{W}}, \mathcal{K}_{\mathbf{V}})$.
\begin{enumerate}
\item If $QM_{j,\mathbf{W}}\mathcal{K}_{\mathbf{W}}
\subseteq M_{j, {\bf V}}\mathcal{K}_{\mathbf{V}}$ for each $j \in
I,$ we call $Q$ \emph{block diagonal}.
\item If $QM_{j,\mathbf{W}}\mathcal{K}_{\mathbf{W}}
= M_{j, {\bf V}}\mathcal{K}_{\mathbf{V}}$ for each $j \in I,$ we call $Q$ {\em component preserving}.
\end{enumerate}
\end{defn}
Observe that $Q$ is block diagonal if and only if
$QM_{J,\mathbf{W}}=M_{J,\mathbf{V}}Q$ for each $J \subseteq I$, i.e. $QM_{j,\mathbf{W}}=M_{j, {\bf V}}Q$ for each $j \in
I$. Note that if $Q$ is block diagonal, then $Q^{\ast}$ is
block diagonal.

% ----------------------------------------------------------------
\section{Approximate oblique dual fusion frames and fusion frame systems}

Oblique duality comes up in many situations where we need to reconstruct a signal in a
different subspace than the one where the analysis is done. On the other hand, sometimes it is difficult to have an expression for an oblique dual fusion frame or there only exists one oblique dual fusion frame which lacks the attributes we desire. To address these issues we will now introduce the definitions of approximate oblique dual fusion frame and of approximate oblique dual fusion frame system.

\begin{defn}\label{D oblique fusion frame dual}
Let $\mathcal{H}=\mathcal{V}\oplus
\mathcal{W}^{\perp}$. Let $(\mathbf{W},{\bf w})$ be a fusion frame
for $\mathcal{W}$ and $({\bf V},{\bf v})$ be a fusion frame for
$\mathcal{V}$. Let $\epsilon\geq 0.$ We say that $(\mathbf{V},{\bf v})$ is an
$\epsilon$-approximate oblique dual fusion frame of
$(\mathbf{W},{\bf w})$ on $\mathcal{V}$ if there exists $Q \in
B(\mathcal{K}_{\mathbf{W}}, \mathcal{K}_{\mathbf{V}})$ such that
\begin{equation}\label{E TvQTw*=I}
||T_{{\bf V},{\bf v}}QT^{\ast}_{\mathbf{W},{\bf
w}}-\pi_{\mathcal{V},\mathcal{W}^{\perp}}|| \leq \epsilon.
\end{equation}
\end{defn}
When we want to refer explicitly to a particular $Q$ we say that $({\bf V},{\bf
v})$ is an $\epsilon$-approximate oblique $Q$-dual fusion frame of $(\mathbf{W},{\bf w})$.
Observe that if $({\bf V},{\bf v})$ is an $\e$-approximate oblique $Q$-dual fusion frame
of $(\mathbf{W},{\bf w})$ on $\mathcal{V}$, then $({\bf W},{\bf w})$
is an $\e$-approximate oblique $Q^{\ast}$-dual fusion frame of $(\mathbf{V},{\bf v})$ on
$\mathcal{W}$. If in Definition~\ref{D oblique fusion frame dual}
$Q$ is block diagonal (component preserving) we say that $({\bf
V},{\bf v})$ is  an $\epsilon$-approximate oblique \emph{block diagonal dual fusion frame}
($\epsilon$-approximate oblique \emph{component preserving dual fusion frame}) of
$(\mathbf{W},{\bf w})$. We refer to \cite{KBCHHMS (2023), Heineken-Morillas (2018)} for a more detailed discussion about the inclusion of an operator $Q$ and its role, which applies also here in this more general setting.
\begin{rem}\label{Obs definicion de mdoa}
\begin{enumerate}
\item[(i)]
For the particular case $ \e = 0 $, $(\mathbf{W},{\bf w})$ and $(\mathbf{V},{\bf v})$ are
\emph{ oblique dual fusion frames} \cite{Heineken-Morillas (2018)}. Moreover, if $ \Vsb = \Wsb = \Hil $ and $ \e = 0 $, then $(\mathbf{W},{\bf
w})$ and $(\mathbf{V},{\bf v})$ are \emph{dual fusion frames} as
defined in \cite{Heineken-Morillas-Benavente-Zakowicz (2014),
Heineken-Morillas (2014)}.

\item[(ii)]
For $ \e < 1 $, it is enough that $(\mathbf{W},{\bf w})$ and $(\mathbf{V},{\bf v})$ are fusion Bessel sequences because $
\mathcal{R}( T_{\mathbf{W},{\bf w}}) = \Wsb $ by (ii) and so $(\mathbf{W},{\bf w})$ is a fusion frame for $ \Wsb $. Similarly $(\mathbf{V},{\bf v})$ is a fusion frame for $ \Vsb $.
\end{enumerate}
\end{rem}
The following characterization lemma has a proof similar to the one of
\cite[Lemma 4.1]{Diaz-Heineken-Morillas (2023)}.
\begin{lem}\label{L equivalencias}
Let $\mathcal{H}=\mathcal{V}\oplus
\mathcal{W}^{\perp} $. Let $(\mathbf{W}, {\bf w})$ be a fusion Bessel sequence for $\mathcal{W}$, $(\mathbf{V}, {\bf v})$ be a
fusion Bessel sequence for $\mathcal{V}$, and let $Q \in
B(\mathcal{K}_{\mathbf{W}}, \mathcal{K}_{\mathbf{V}})$. Let $\e \geq 0$. Then the
following statements are equivalent:
\begin{enumerate}
  \item[(i)] $||\pi_{\mathcal{V},{\mathcal{W}}^\bot}f-T_{{\bf V},{\bf v}}Q T^{\ast}_{{\bf W},{\bf w}}f|| \leq \epsilon ||f||$ for all $f\in \mathcal{H}.$
  \item[(ii)] $||\pi_{\mathcal{W},{\mathcal{V}}^\bot}f-T_{{\bf W},{\bf w}}Q^* T^{\ast}_{{\bf V},{\bf v}}f||  \leq \epsilon ||f||$ for all $f\in \mathcal{H}.$
  \item[(iii)] $|\prode{\pi_{\mathcal{W},{\mathcal{V}}^\bot} f,g}-\prode{Q^*T^{\ast}_{{\bf V},{\bf v}}f,T_{{\bf W},{\bf w}}^{\ast}g}| \leq \epsilon ||f|| ||g||$
  for all $f, g \in \mathcal{H}$.
 \item[(iv)] $|\prode{\pi_{\mathcal{V},{\mathcal{W}}^\bot} f,g}-\prode{QT^{\ast}_{{\bf W},{\bf w}}f,T_{{\bf V},{\bf v}}^{\ast}g}| \leq \epsilon ||f|| ||g||$
  for all $f, g \in \mathcal{H}$.
 \end{enumerate}
If $(\mathbf{W}, {\bf w})$ is a fusion frame for $\mathcal{W}$, $(\mathbf{V}, {\bf v})$ is a fusion frame for $\mathcal{V}$, and any of these equivalent conditions is satisfied, then
$(\mathbf{V}, {\bf v})$  is an $\epsilon$-approximate oblique $Q$-dual fusion frame of
$(\mathbf{W}, {\bf w})$ on $\mathcal{V},$ and $(\mathbf{W}, {\bf
w})$ is an $\epsilon$-approximate oblique $Q^{\ast}$-dual fusion frame of $(\mathbf{V}, {\bf
v})$ on $\mathcal{W}.$
\end{lem}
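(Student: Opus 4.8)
The plan is to establish the chain of equivalences $(i)\Leftrightarrow(ii)\Leftrightarrow(iii)\Leftrightarrow(iv)$ using only elementary Hilbert-space adjoint manipulations, and then read off the final assertion as an immediate consequence of Definition~\ref{D oblique fusion frame dual} together with Remark~\ref{Obs definicion de mdoa}(i). First I would observe that the operator $A:=\pi_{\mathcal{V},\mathcal{W}^{\perp}}-T_{{\bf V},{\bf v}}QT^{\ast}_{{\bf W},{\bf w}}$ is bounded on $\mathcal{H}$, and that statement $(i)$ says precisely $\|A\|\le\epsilon$. Taking adjoints and using the two facts recalled in the preliminaries, namely $(\pi_{\mathcal{V},\mathcal{W}^{\perp}})^{\ast}=\pi_{\mathcal{W},\mathcal{V}^{\perp}}$ and $(T_{{\bf V},{\bf v}}QT^{\ast}_{{\bf W},{\bf w}})^{\ast}=T_{{\bf W},{\bf w}}Q^{\ast}T^{\ast}_{{\bf V},{\bf v}}$, we get $A^{\ast}=\pi_{\mathcal{W},\mathcal{V}^{\perp}}-T_{{\bf W},{\bf w}}Q^{\ast}T^{\ast}_{{\bf V},{\bf v}}$, and since $\|A\|=\|A^{\ast}\|$ this gives $(i)\Leftrightarrow(ii)$.

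For $(ii)\Leftrightarrow(iii)$ and $(i)\Leftrightarrow(iv)$ I would use the standard fact that for a bounded operator $B$ on $\mathcal{H}$ one has $\|B\|\le\epsilon$ if and only if $|\langle Bf,g\rangle|\le\epsilon\|f\|\,\|g\|$ for all $f,g\in\mathcal{H}$. Applying this to $B=A^{\ast}$ we rewrite $\langle A^{\ast}f,g\rangle=\langle\pi_{\mathcal{W},\mathcal{V}^{\perp}}f,g\rangle-\langle T_{{\bf W},{\bf w}}Q^{\ast}T^{\ast}_{{\bf V},{\bf v}}f,g\rangle$, and then move $T_{{\bf W},{\bf w}}$ across the inner product as $T^{\ast}_{{\bf W},{\bf w}}$ to obtain $\langle Q^{\ast}T^{\ast}_{{\bf V},{\bf v}}f,T^{\ast}_{{\bf W},{\bf w}}g\rangle$; this is exactly the quantity appearing in $(iii)$. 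The equivalence $(i)\Leftrightarrow(iv)$ is identical after applying the same criterion directly to $B=A$ and shifting $T_{{\bf V},{\bf v}}$ over as its adjoint. One should take a little care that these reformulations are genuinely two-sided: the inequality characterization of the operator norm is an honest "if and only if", so no information is lost.

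Finally, for the concluding sentence of the lemma: if in addition $(\mathbf{W},{\bf w})$ is a fusion frame for $\mathcal{W}$ and $(\mathbf{V},{\bf v})$ is a fusion frame for $\mathcal{V}$, then condition $(i)$ is literally inequality~\eqref{E TvQTw*=I} of Definition~\ref{D oblique fusion frame dual}, so $(\mathbf{V},{\bf v})$ is an $\epsilon$-approximate oblique $Q$-dual fusion frame of $(\mathbf{W},{\bf w})$ on $\mathcal{V}$; and condition $(ii)$, being $\|A^{\ast}\|\le\epsilon$, is the corresponding inequality with the roles of $\mathbf{W},\mathbf{V}$ and of $Q,Q^{\ast}$ interchanged, which is exactly the statement that $(\mathbf{W},{\bf w})$ is an $\epsilon$-approximate oblique $Q^{\ast}$-dual fusion frame of $(\mathbf{V},{\bf v})$ on $\mathcal{W}$. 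I do not expect any serious obstacle here; the only points requiring attention are bookkeeping ones, namely correctly tracking which adjoints land where when passing operators across inner products, and remembering that $Q^{\ast\ast}=Q$ so that the symmetry between the two fusion frames is exact. The whole argument mirrors that of \cite[Lemma 4.1]{Diaz-Heineken-Morillas (2023)}, as the statement itself indicates.
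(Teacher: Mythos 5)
Your proof is correct and follows essentially the same route as the paper, which itself only remarks that the argument is the standard adjoint/sesquilinear-form manipulation carried over from \cite[Lemma 4.1]{Diaz-Heineken-Morillas (2023)}: setting $A=\pi_{\mathcal{V},\mathcal{W}^{\perp}}-T_{{\bf V},{\bf v}}QT^{\ast}_{{\bf W},{\bf w}}$, using $\|A\|=\|A^{\ast}\|$ and the characterization of the operator norm via $|\langle Af,g\rangle|$ gives exactly the four equivalences, and the final assertion is then Definition~\ref{D oblique fusion frame dual} read off from (i) and (ii).
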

Taking into account Lemma~\ref{L equivalencias}(i), for a block diagonal $Q$, (\ref{E TvQTw*=I}) yields the next simpler approximate reconstruction formula:

\centerline{$||\pi_{\mathcal{V},\mathcal{W}^{\perp}}f-\sum_{j \in
I}v_{j}w_{j}Q_{j}f|| \leq \epsilon||f||~,~~\forall f \in
\mathcal{H}$,}

\noindent where $Q_{j} \in B\!\paren{\mathcal{H}, V_{j}}$ is given by $Q_{j}f=(QM_{j}(\pi_{W_{i}}f)_{i \in I})_{j}$.

We will now define and develop the concept of oblique
dual fusion frame systems in order to consider local processing in the setting described previously. We will use the
following operator, which was introduced in \cite{Heineken-Morillas
(2014)}, and links the synthesis
operator of a fusion frame system and the synthesis operator of its
associated frame.

Assume that $(\mathbf{W}, {\bf w})$ is a fusion Bessel sequence for
$\mathcal{W}$ and $\mathcal{F}_{i}$ be a frame for $W_i$ with frame
bounds $\alpha_{i}, \beta_{i}$ such that ${\rm sup}_{i \in
I}\beta_{i}= \beta < \infty$. We consider the operator

\centerline{$C_{\mathcal{F}}: \oplus_{i \in I}
\ell^2(L_{i})\rightarrow \mathcal{K}_{\mathbf{W}},\,\,
C_{\mathcal{F}}((x_{i,l})_{l \in L_i})_{i \in
I}=(T_{\mathcal{F}_{i}}(x_{i,l})_{l \in L_i})_{i \in I}.$}

Observe that $C_{\mathcal{F}}$ is a surjective bounded operator and
$||C_{\mathcal{F}}|| \leq \beta$. The adjoint operator is
$C_{\mathcal{F}}^{\ast}: \mathcal{K}_{\mathbf{W}} \rightarrow \oplus_{i
\in I} \ell^2(L_{i})$, with $C_{\mathcal{F}}^{\ast}(g_{i})_{i \in
I}=(T_{\mathcal{F}_{i}}^{\ast}g_{i})_{i \in I}$ and it holds
$||C_{\mathcal{F}}^{\ast}(g_{i})_{i \in I}|| \leq \beta||(g_{i})_{i \in
I}||$. If $0 < \alpha ={\rm inf}_{i \in I}\alpha_{i}$, then
$\alpha ||(g_{i})_{i \in I}|| \leq ||C_{\mathcal{F}}^{\ast}(g_{i})_{i
\in I}||$. The bounded left inverses of $C_{\mathcal{F}}^{\ast}$ are
 all $C_{\widetilde{\mathcal{F}}} \in B(\oplus_{i \in I}
\ell^2(L_{i}), \mathcal{K}_{\mathbf{W}})$ such that
$\widetilde{\mathcal{F}}_{i}$ is a dual frame of $\mathcal{F}_{i}$
with an upper frame bound $\widetilde{\beta}_{i}$ that satisfies
$\widetilde{\beta}={\rm sup}_{i \in I}\widetilde{\beta}_{i} <
\infty$. Note that $T_{{\bf w}\mathcal{F}}=T_{\mathbf{W},{\bf
w}}C_{\mathcal{F}}\,\,\text { and }\,\,T_{\mathbf{W},{\bf
w}}=T_{{\bf w}\mathcal{F}}C_{\widetilde{\mathcal{F}}}^{\ast}.$

We now introduce the definition of approximate oblique dual fusion frame systems:
\begin{defn}\label{D oblique dual fusion frame system}
Let $\mathcal{H}=\mathcal{V}\oplus\mathcal{W}^{\perp}$. Let
$(\mathbf{W},{\bf w}, \mathcal{F})$ be a fusion frame system for
$\mathcal{W}$ with upper local frame  bounds $\beta_{i}$ such that
${\rm sup}_{i \in I}\beta_{i} < \infty$, $({\bf V},{\bf v},
\mathcal{G})$ be a fusion frame system for $\mathcal{V}$ with local
upper frame bounds $\widetilde{\beta}_{i}$  such that ${\rm sup}_{i
\in I}\widetilde{\beta}_{i} < \infty$. Let $\mathcal{F}_{i}$ and $\mathcal{G}_{i}$ have the same index set for each $i \in I$. Let $\e \geq 0$. Then
$({\bf V},{\bf v}, \mathcal{G})$ is an $\epsilon$-approximate
oblique dual fusion frame system of $(\mathbf{W},{\bf w},
\mathcal{F})$ on $\mathcal{V}$ if $({\bf V},{\bf v})$ is an
$\epsilon$-approximate oblique
$C_{\mathcal{G}}C_{\mathcal{F}}^{\ast}$-dual fusion frame of $(\mathbf{W},{\bf w})$ on $\mathcal{V}$.
\end{defn}
\begin{rem}\label{R Q Mi sum Wj subset Mi sum Vj}
The operator
$C_{\mathcal{G}}C_{\mathcal{F}}^{\ast}: \mathcal{K}_{\mathbf{W}}
\rightarrow \mathcal{K}_{\mathbf{V}},\,
C_{\mathcal{G}}C_{\mathcal{F}}^{\ast}(f_{i})_{i \in
I}=(T_{\mathcal{G}_{i}}T_{\mathcal{F}_{i}}^{\ast}f_{i})_{i \in I}$ is
block diagonal.
\end{rem}
If $C_{\mathcal{G}}C_{\mathcal{F}}^{\ast}$ in Definition~\ref{D oblique
dual fusion frame system}  is component preserving, we call $({\bf
V},{\bf v}, \mathcal{G})$ an $\epsilon$-\emph{approximate}
\emph{oblique  component preserving dual fusion frame system} of
$(\mathbf{W},{\bf w}, \mathcal{F}).$
\begin{rem}
For $\mathcal{W}=\mathcal{V}$ in Definition~\ref{D oblique
fusion frame dual} and in Definition~\ref{D oblique dual fusion
frame system} we have the concepts of dual fusion frame and dual fusion
frame system, respectively, considered in
\cite{Heineken-Morillas-Benavente-Zakowicz (2014), Heineken-Morillas
(2014)}. We then say that $({\bf V},{\bf v})$ is a
$Q$-dual fusion frame of $(\mathbf{W},{\bf w})$ or that $({\bf
V},{\bf v}, \mathcal{G})$ is a dual fusion frame system of
$(\mathbf{W},{\bf w}, \mathcal{F})$.
\end{rem}
Looking at Definition~\ref{D oblique dual fusion frame
system} and Remark~\ref{R Q Mi sum Wj subset Mi sum Vj} we note that
each $ \e $-approximate oblique dual fusion frame system can be linked to an $ \e $-approximate oblique block diagonal dual fusion frame. Conversely, we can always associate to an  $ \e $-approximate oblique block diagonal dual fusion frame pair an $ \e $-approximate oblique dual fusion frame system pair.
Also, approximate duals can be obtained from approximate oblique duals and viceversa. More precisely, Theorem 4.3, Theorem 4.4, Proposition 5.1, Corollary 5.2, Proposition 5.3 and Corollary 5.4 of \cite{Heineken-Morillas (2018)} can be extended  with the pertinent changes to the approximate case (see also \cite[Section 6]{KBCHHMS (2023)}). In particular we only state the following result which is the extension of \cite[Theorem 4.4]{Heineken-Morillas (2018)} that we will use later:

\begin{thm}\label{T dual fusion frame systems} 
Let $\mathcal{H}=\mathcal{V}\oplus\mathcal{W}^{\perp}$. Let
$(\mathbf{W},{\bf w}, \mathcal{F})$ be a fusion Bessel system for
$\mathcal{W}$ such that $\mathcal{F}_{i}$ has upper frame bound
$\beta_{i}$  with ${\rm sup}_{i \in I}\beta_{i} < \infty$, and let
$({\bf V},{\bf v}, \mathcal{G})$ be a fusion Bessel system for
$\mathcal{V}$  such that $\mathcal{G}_{i}$ has upper frame bound
$\widetilde{\beta}_{i}$ with ${\rm sup}_{i \in
I}\widetilde{\beta}_{i}< \infty$. Let $\e \geq 0$. If
$\mathcal{F}_{i}$ and $\mathcal{G}_{i}$ have the same index set for each $i \in I$, then the
following conditions are equivalent:
  \begin{enumerate}
    \item ${\bf w}\mathcal{F}$ and ${\bf v}\mathcal{G}$ are $\epsilon$-approximate oblique dual frames for $\mathcal{H}.$
    \item  $(\mathbf{V}, {\bf v},\mathcal{G})$ is an $\epsilon$-approximate oblique dual fusion frame system of $(\mathbf{W}, {\bf w},\mathcal{F})$ on $\mathcal{V}$.
  \end{enumerate}
\end{thm}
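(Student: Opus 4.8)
The plan is to unwind both conditions using the factorization identities $T_{{\bf w}\mathcal{F}}=T_{\mathbf{W},{\bf w}}C_{\mathcal{F}}$ and $T_{\mathbf{W},{\bf w}}=T_{{\bf w}\mathcal{F}}C_{\widetilde{\mathcal{F}}}^{\ast}$ recorded just before Definition~\ref{D oblique dual fusion frame system} (and the analogous ones for $({\bf V},{\bf v},\mathcal{G})$), together with the defining relation $T_{\mathbf{W},{\bf w}}^{\ast}=C_{\mathcal{F}}^{\ast}\,T_{{\bf w}\mathcal{F}}^{\ast}$ obtained by taking adjoints. First I would observe that, by the definition of approximate oblique dual frames (the $\mathcal{V}=\mathcal{W}=\mathcal{H}$, general-$Q$ version of Definition~\ref{D oblique dual frame} combined with Lemma~\ref{L equivalencias}), condition~(1) says precisely that $\|T_{{\bf v}\mathcal{G}}T_{{\bf w}\mathcal{F}}^{\ast}-\pi_{\mathcal{V},\mathcal{W}^{\perp}}\|\leq\epsilon$, while by Definition~\ref{D oblique dual fusion frame system} condition~(2) says $\|T_{{\bf V},{\bf v}}\,C_{\mathcal{G}}C_{\mathcal{F}}^{\ast}\,T_{\mathbf{W},{\bf w}}^{\ast}-\pi_{\mathcal{V},\mathcal{W}^{\perp}}\|\leq\epsilon$. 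So the whole theorem reduces to the operator identity
\[
T_{{\bf v}\mathcal{G}}\,T_{{\bf w}\mathcal{F}}^{\ast}
= T_{{\bf V},{\bf v}}\,C_{\mathcal{G}}C_{\mathcal{F}}^{\ast}\,T_{\mathbf{W},{\bf w}}^{\ast},
\]
after which the two norm inequalities are literally the same statement and the equivalence is immediate.

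To establish this identity I would compute the right-hand side: $T_{{\bf V},{\bf v}}C_{\mathcal{G}} = T_{{\bf v}\mathcal{G}}$ by the fusion-frame-system factorization for $({\bf V},{\bf v},\mathcal{G})$, so the right-hand side equals $T_{{\bf v}\mathcal{G}}\,C_{\mathcal{F}}^{\ast}\,T_{\mathbf{W},{\bf w}}^{\ast}$; and $C_{\mathcal{F}}^{\ast}T_{\mathbf{W},{\bf w}}^{\ast} = (T_{\mathbf{W},{\bf w}}C_{\mathcal{F}})^{\ast} = T_{{\bf w}\mathcal{F}}^{\ast}$, again by the factorization for $(\mathbf{W},{\bf w},\mathcal{F})$. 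This gives exactly the left-hand side. Here one must be careful that all the operators involved are genuinely bounded, which is guaranteed by the hypotheses $\sup_i\beta_i<\infty$ and $\sup_i\widetilde\beta_i<\infty$ (these make $C_{\mathcal{F}}$, $C_{\mathcal{G}}$ bounded with norms $\le\beta$, $\le\widetilde\beta$), together with the standing assumption that weights lie in $\ell^\infty(I)$ so that $T_{\mathbf{W},{\bf w}}$, $T_{{\bf V},{\bf v}}$ are bounded; the hypothesis that $\mathcal{F}_i$ and $\mathcal{G}_i$ share an index set is what makes $C_{\mathcal{G}}C_{\mathcal{F}}^{\ast}$ well defined as a map $\oplus_i\ell^2(L_i)\to\oplus_i\ell^2(L_i)$ before composing, so that $C_{\mathcal{G}}C_{\mathcal{F}}^{\ast}$ in Remark~\ref{R Q Mi sum Wj subset Mi sum Vj} makes sense.

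The only genuinely non-trivial point, and the one I would treat as the main obstacle, is the direction (1)$\Rightarrow$(2): condition~(2) as phrased in Definition~\ref{D oblique dual fusion frame system} presupposes that $({\bf V},{\bf v})$ is a \emph{fusion frame} for $\mathcal{V}$ (and $(\mathbf{W},{\bf w})$ a fusion frame for $\mathcal{W}$), whereas the hypotheses of the theorem only give fusion Bessel \emph{systems}. This is handled exactly as in Remark~\ref{Obs definicion de mdoa}(ii): if $\epsilon<1$ the approximate reconstruction inequality forces $\mathcal{R}(T_{\mathbf{W},{\bf w}})=\mathcal{W}$ and $\mathcal{R}(T_{{\bf V},{\bf v}})=\mathcal{V}$, upgrading the Bessel sequences to fusion frames; if $\epsilon\ge 1$ the inequality is non-restrictive and one should read both conditions in the Bessel sense, in which case the operator identity above still gives the equivalence verbatim. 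I would state this caveat once and then present the clean computation; modulo it, the proof is the three-line operator manipulation displayed above.
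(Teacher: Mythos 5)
Your argument is correct and is essentially the paper's own route: the paper defers to the proof of Theorem 4.4 of Heineken--Morillas (2018), which rests on exactly the factorization $T_{{\bf v}\mathcal{G}}T_{{\bf w}\mathcal{F}}^{\ast}=T_{\mathbf{V},{\bf v}}C_{\mathcal{G}}C_{\mathcal{F}}^{\ast}T_{\mathbf{W},{\bf w}}^{\ast}$ coming from $T_{{\bf w}\mathcal{F}}=T_{\mathbf{W},{\bf w}}C_{\mathcal{F}}$ and $T_{{\bf v}\mathcal{G}}=T_{\mathbf{V},{\bf v}}C_{\mathcal{G}}$, after which the two norm conditions coincide. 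Your side remark on upgrading the Bessel hypotheses to frames via Remark~\ref{Obs definicion de mdoa}(ii) when $\epsilon<1$ is also consistent with how the paper treats this point.
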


\subsection{Oblique dual fusion frames from approximate ones}

If in Definition~\ref{D oblique fusion frame dual} $ \e < 1 $, then Neumann's Theorem says that $(T_{{\bf V},{\bf
v}}QT^{\ast}_{\mathbf{W},{\bf w}}) _{|\Vsb} $  is an invertible
operator from $\Vsb$ to $\Vsb$. Note that $(T_{{\bf V},{\bf v}}QT^{\ast}_{\mathbf{W},{\bf
w}}) _{|\Wsb}=(T_{{\bf V},{\bf v}}QT^{\ast}_{\mathbf{W},{\bf w}})
_{|\Vsb} (\pi_{ \Vsb \Wsb^{ \perp } } )_{ \mid \Wsb }$. So by \cite[Lemma 2.1(i)]{Diaz-Heineken-Morillas (2023)}, $(T_{{\bf V},{\bf v}}QT^{\ast}_{\mathbf{W},{\bf w}}) _{|\Wsb}$ is an
invertible operator from $ \Wsb $ to $ \Vsb $. Similarly, $(T_{{\bf
W},{\bf w}}Q^{\ast}T^{\ast}_{\mathbf{V},{\bf v}}) _{|\Wsb} $ and $(T_{{\bf
W},{\bf w}}Q^{\ast}T^{\ast}_{\mathbf{V},{\bf v}}) _{|\Vsb} $ are
invertible operators.
\begin{thm}
Let $\mathcal{H}=\mathcal{V}\oplus
\mathcal{W}^{\perp}$. Let $(\mathbf{W},{\bf w})$ be a fusion frame
for $\mathcal{W}$, $({\bf V},{\bf v})$ be a fusion frame for
$\mathcal{V}$ and $Q \in B(\mathcal{K}_{\mathbf{W}},
\mathcal{K}_{\mathbf{V}})$ such that $(\mathbf{V},{\bf v})$ is an
$\epsilon$-approximate oblique $Q$-dual fusion frame of
$(\mathbf{W},{\bf w})$ on $\mathcal{V}$ with $\epsilon < 1$. Let $A=(T_{{\bf V},{\bf
v}}QT_{\mathbf{W},{\bf w}}^{\ast}) _{|\Vsb})^{ -1 }$ and $\widetilde{\mathbf{v}}$ be a family of weights. If $(A\mathbf{V},\widetilde{{\bf v}})$ is a Bessel sequence for $\mathcal{V}$ and $Q_{A  T_{\mathbf{V},{\bf v}},\widetilde{\mathbf{v}}}$ is a bounded operator, then $(A\mathbf{V},\widetilde{{\bf v}})$ is an oblique $Q_{A  T_{\mathbf{V},{\bf v}},\widetilde{\mathbf{v}}}Q$-dual fusion frame of
$(\mathbf{W},{\bf w})$ on $\mathcal{V}$.
\end{thm}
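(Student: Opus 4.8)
The plan is to reduce the statement to a single intertwining identity for $Q_{AT_{\mathbf{V},\mathbf{v}},\widetilde{\mathbf{v}}}$, namely
\[
T_{A\mathbf{V},\widetilde{\mathbf{v}}}\,Q_{AT_{\mathbf{V},\mathbf{v}},\widetilde{\mathbf{v}}}=AT_{\mathbf{V},\mathbf{v}},
\]
and then to an elementary computation with the oblique projection. Throughout I would write $B:=T_{\mathbf{V},\mathbf{v}}QT^{\ast}_{\mathbf{W},\mathbf{w}}\in B(\mathcal{H})$, so that $\|B-\pi_{\mathcal{V},\mathcal{W}^{\perp}}\|\leq\epsilon<1$ and, by the discussion preceding the statement, $A=(B_{|\mathcal{V}})^{-1}$ is a well-defined bounded operator on $\mathcal{V}$ (Neumann's theorem).

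First I would record two elementary facts about $B$. Since $(\mathbf{V},\mathbf{v})$ is a fusion frame for $\mathcal{V}$ we have ${\rm Im}(T_{\mathbf{V},\mathbf{v}})=\mathcal{V}$, hence ${\rm Im}(B)\subseteq\mathcal{V}$ and $AB$ is a well-defined operator from $\mathcal{H}$ into $\mathcal{V}$; and since $W_{i}\subseteq\mathcal{W}$ for every $i$, for $f\in\mathcal{W}^{\perp}$ one has $T^{\ast}_{\mathbf{W},\mathbf{w}}f=(w_{i}\pi_{W_{i}}f)_{i\in I}=0$, so $\mathcal{W}^{\perp}\subseteq{\rm Ker}(B)$. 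Evaluating $AB$ on the two summands of $\mathcal{H}=\mathcal{V}\oplus\mathcal{W}^{\perp}$: for $v\in\mathcal{V}$ we get $ABv=A(B_{|\mathcal{V}}v)=v$, and for $f\in\mathcal{W}^{\perp}$ we get $ABf=0$. Since $AB$ and $\pi_{\mathcal{V},\mathcal{W}^{\perp}}$ agree on $\mathcal{V}$ and on $\mathcal{W}^{\perp}$, I conclude $AB=\pi_{\mathcal{V},\mathcal{W}^{\perp}}$.

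Granting the intertwining identity, the computation then closes at once:
\[
T_{A\mathbf{V},\widetilde{\mathbf{v}}}\left(Q_{AT_{\mathbf{V},\mathbf{v}},\widetilde{\mathbf{v}}}Q\right)T^{\ast}_{\mathbf{W},\mathbf{w}}=\left(T_{A\mathbf{V},\widetilde{\mathbf{v}}}Q_{AT_{\mathbf{V},\mathbf{v}},\widetilde{\mathbf{v}}}\right)QT^{\ast}_{\mathbf{W},\mathbf{w}}=AT_{\mathbf{V},\mathbf{v}}QT^{\ast}_{\mathbf{W},\mathbf{w}}=AB=\pi_{\mathcal{V},\mathcal{W}^{\perp}}.
\]
To match Definition~\ref{D oblique fusion frame dual} with $\epsilon=0$ it then remains to check three bookkeeping points: $Q_{AT_{\mathbf{V},\mathbf{v}},\widetilde{\mathbf{v}}}Q\in B(\mathcal{K}_{\mathbf{W}},\mathcal{K}_{A\mathbf{V}})$ because both factors are bounded (the first by hypothesis, the second since $Q\in B(\mathcal{K}_{\mathbf{W}},\mathcal{K}_{\mathbf{V}})$); $(\mathbf{W},\mathbf{w})$ is a fusion frame for $\mathcal{W}$ by hypothesis; and $(A\mathbf{V},\widetilde{\mathbf{v}})$, assumed to be a Bessel sequence for $\mathcal{V}$, is in fact a fusion frame for $\mathcal{V}$, because the displayed identity forces ${\rm Im}(T_{A\mathbf{V},\widetilde{\mathbf{v}}})\supseteq{\rm Im}(\pi_{\mathcal{V},\mathcal{W}^{\perp}})=\mathcal{V}$, while Besselness gives the reverse inclusion.

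The hard part — and the only place the hypotheses on $\widetilde{\mathbf{v}}$ and on $Q_{AT_{\mathbf{V},\mathbf{v}},\widetilde{\mathbf{v}}}$ are actually used — is the intertwining identity $T_{A\mathbf{V},\widetilde{\mathbf{v}}}Q_{AT_{\mathbf{V},\mathbf{v}},\widetilde{\mathbf{v}}}=AT_{\mathbf{V},\mathbf{v}}$. I would establish it componentwise: the $j$-th component of $Q_{AT_{\mathbf{V},\mathbf{v}},\widetilde{\mathbf{v}}}(f_{i})_{i\in I}$ is $\widetilde{v}_{j}^{-1}\,AT_{\mathbf{V},\mathbf{v}}M_{j,\mathbf{V}}(f_{i})_{i\in I}$, which lies in $AT_{\mathbf{V},\mathbf{v}}M_{j,\mathbf{V}}\mathcal{K}_{\mathbf{V}}=AV_{j}$, i.e. exactly in the $j$-th subspace of $A\mathbf{V}$; the Bessel hypothesis on $(A\mathbf{V},\widetilde{\mathbf{v}})$ and the boundedness hypothesis on $Q_{AT_{\mathbf{V},\mathbf{v}},\widetilde{\mathbf{v}}}$ guarantee both sides are well-defined bounded operators, and reassembling these components with the weights $\widetilde{v}_{j}$ reproduces $AT_{\mathbf{V},\mathbf{v}}$. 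This is precisely the extension to the present setting of the $Q$-operator construction used for oblique dual fusion frames in \cite{Heineken-Morillas (2018)}; everything else above is routine.
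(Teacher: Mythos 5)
Your proof is correct and follows essentially the same route as the paper's: the paper's (much terser) argument also reduces to the identity $f=A\,T_{\mathbf{V},\mathbf{v}}QT^{\ast}_{\mathbf{W},\mathbf{w}}f=T_{A\mathbf{V},\widetilde{\mathbf{v}}}Q_{AT_{\mathbf{V},\mathbf{v}},\widetilde{\mathbf{v}}}QT^{\ast}_{\mathbf{W},\mathbf{w}}f$ for $f\in\mathcal{V}$ and then invokes Definition~3.1 together with Remark~3.2(ii) to conclude. You have simply made explicit the steps the paper leaves implicit (the intertwining identity $T_{A\mathbf{V},\widetilde{\mathbf{v}}}Q_{AT_{\mathbf{V},\mathbf{v}},\widetilde{\mathbf{v}}}=AT_{\mathbf{V},\mathbf{v}}$, the vanishing of both sides on $\mathcal{W}^{\perp}$, and the surjectivity argument showing $(A\mathbf{V},\widetilde{\mathbf{v}})$ is genuinely a fusion frame for $\mathcal{V}$).
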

\begin{proof}
If $ f \in \Vsb $ we have $ f = ((T_{{\bf V},{\bf v}}QT_{\mathbf{W},{\bf w}}^{\ast})
_{|\Vsb})^{ -1 } T_{{\bf V},{\bf v}}QT_{\mathbf{W},{\bf w}}^{\ast} f =T_{A\mathbf{V},\widetilde{{\bf
v}}}Q_{A  T_{\mathbf{V},{\bf v}},\widetilde{\mathbf{v}}}QT^{\ast}_{\mathbf{W},{\bf w}}f$. By Definition~\ref{D oblique fusion frame dual} and
Remark~\ref{Obs definicion de mdoa}(ii), this proves the theorem.
\end{proof}
That $(A\mathbf{V},\widetilde{{\bf v}})$ is a
fusion Bessel sequence for $\mathcal{V}$ and
$Q_{A  T_{\mathbf{V},{\bf v}},\widetilde{\mathbf{v}}}$
is a well defined bounded operator depends on the particular case
we are working with. Nevertheless, we can use Proposition~\ref{P A invertible AV Bessel} (or Lemma~\ref{L V Bessel}) and Lemma~\ref{L Q acotada} below to have sufficient conditions on the weights $\widetilde{\mathbf{v}}$ to get this.
\begin{lem}\label{L V Bessel}
Let $L \in
B(\mathcal{K}_{\mathbf{W}},\mathcal{V})$. Let
$V_{i}=LM_{i}\mathcal{K}_{\mathbf{W}}$ for each $i \in
I$ and $\mathbf{v}$ be a family of weights. If there exists $\delta > 0$ such that $\delta
\leq v_{i}^{-1}(\gamma(LM_{i}))$ for all $i \in I$,
then $(\mathbf{V},\mathbf{v})$ is a fusion Bessel sequence for $\mathcal{V}$ with upper frame bound
$\frac{\|L\|^{2}}{\delta^{2}}$.
\end{lem}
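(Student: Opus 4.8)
The goal is to show that $(\mathbf{V},\mathbf{v})$ with $V_i = LM_i\mathcal{K}_{\mathbf{W}}$ is a fusion Bessel sequence for $\mathcal{V}$, i.e. that $\sum_{i\in I} v_i^2\|\pi_{V_i}f\|^2 \le \frac{\|L\|^2}{\delta^2}\|f\|^2$ for all $f\in\mathcal{V}$. The plan is to reduce the estimate on the orthogonal projection $\pi_{V_i}f$ to an estimate involving $LM_i$ and the reduced minimum modulus $\gamma(LM_i)$, then sum and use the hypothesis $\delta \le v_i^{-1}\gamma(LM_i)$.

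First I would fix $f\in\mathcal{V}$ and $i\in I$, and observe that $V_i = \operatorname{Im}(LM_i)$ is a closed subspace (this is implicit since the hypothesis $\delta \le v_i^{-1}\gamma(LM_i)$ forces $\gamma(LM_i)>0$, hence $LM_i$ has closed range). The key step is the pointwise inequality
\begin{equation*}
v_i^2\|\pi_{V_i}f\|^2 \le \frac{1}{\delta^2}\gamma(LM_i)^2\|\pi_{V_i}f\|^2 \le \frac{1}{\delta^2}\|(LM_i)^{\ast}f\|^2,
\end{equation*}
where the first inequality is just the hypothesis rewritten as $v_i^2\le\delta^{-2}\gamma(LM_i)^2$, and the second is the fact that $\gamma(LM_i)\|g\|\le\|(LM_i)^{\ast}g\|$ for $g\in\operatorname{Ker}((LM_i)^{\ast})^{\perp} = \overline{\operatorname{Im}(LM_i)} = V_i$, applied to $g=\pi_{V_i}f\in V_i$. (Here one uses $\gamma(A)=\gamma(A^{\ast})$ for operators with closed range, together with the definition of $\gamma$.)

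Next I would sum over $i\in I$ and recognize the resulting sum as the square of a norm in $\oplus_{i\in I}\mathcal{K}_{\mathbf{W}}$-style bookkeeping. Since $(LM_i)^{\ast} = M_i^{\ast}L^{\ast} = M_i L^{\ast}$ (as $M_i$ is selfadjoint) and the $M_i$ project onto mutually orthogonal components of $\mathcal{K}_{\mathbf{W}}$, we get
\begin{equation*}
\sum_{i\in I}\|(LM_i)^{\ast}f\|^2 = \sum_{i\in I}\|M_i L^{\ast}f\|^2 = \|L^{\ast}f\|^2 \le \|L\|^2\|f\|^2.
\end{equation*}
Combining this with the pointwise bound yields $\sum_{i\in I} v_i^2\|\pi_{V_i}f\|^2 \le \frac{\|L\|^2}{\delta^2}\|f\|^2$, which is exactly the fusion Bessel condition with bound $\|L\|^2/\delta^2$; in particular each individual term being finite and $L^{\ast}f$ lying in $\mathcal{V}$ (so $V_i\subseteq\mathcal{V}$) ensures everything is well defined in $\mathcal{V}$.

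The main obstacle I anticipate is purely bookkeeping rather than conceptual: making sure the reduced-minimum-modulus inequality is applied to the right operator and the right subspace — specifically that $\gamma(LM_i)\|\pi_{V_i}f\|\le\|(LM_i)^{\ast}\pi_{V_i}f\|$ and that $(LM_i)^{\ast}\pi_{V_i}f = (LM_i)^{\ast}f$ because $\operatorname{Ker}((LM_i)^{\ast}) = V_i^{\perp}$, so $(LM_i)^{\ast}$ annihilates $f-\pi_{V_i}f\in V_i^{\perp}$. Once that identification is clean, the orthogonality of the ranges of the $M_i$ collapses the sum and the result follows immediately.
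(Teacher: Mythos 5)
Your argument is correct, but it takes a genuinely different route from the paper's. The paper proves the lemma by passing through frame systems: it picks an orthonormal basis $\{e_{ij}\}_{j\in J_i}$ of each block $M_i\mathcal{K}_{\mathbf{W}}$, invokes \cite[Corollary 5.3.2]{Christensen (2016)} together with $\|(LM_i)^{\dagger}\|^{-1}=\gamma(LM_i)$ to see that $\{v_i^{-1}Le_{ij}\}_{j\in J_i}$ is a frame for $V_i$ with lower bound $v_i^{-2}\gamma(LM_i)^2\ge\delta^2$, notes that the full family $\{Le_{ij}\}_{i,j}$ is Bessel with bound $\|L\|^2$, and then applies Theorem~\ref{T wF marco sii WwF fusion frame system} to convert the global Bessel bound divided by the uniform local lower bound into the fusion Bessel bound $\|L\|^2/\delta^2$. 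You instead verify the fusion Bessel inequality directly: the chain $v_i^2\|\pi_{V_i}f\|^2\le\delta^{-2}\gamma(LM_i)^2\|\pi_{V_i}f\|^2\le\delta^{-2}\|(LM_i)^{\ast}f\|^2$ is valid (using $\gamma(A)=\gamma(A^{\ast})$ for closed-range operators, $\mathrm{Ker}((LM_i)^{\ast})=V_i^{\perp}$, and $\pi_{V_i}f\in\mathrm{Ker}((LM_i)^{\ast})^{\perp}$), and the collapse $\sum_i\|M_iL^{\ast}f\|^2=\|L^{\ast}f\|^2\le\|L\|^2\|f\|^2$ is exactly right since the $M_i$ are mutually orthogonal projections summing to the identity on $\mathcal{K}_{\mathbf{W}}$. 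Your version is more elementary and self-contained, needing only basic properties of the reduced minimum modulus and no external frame-theoretic results; the paper's version is less direct but situates the lemma inside the fusion-frame-system machinery (local frames plus Theorem~\ref{T wF marco sii WwF fusion frame system}) that the rest of the paper reuses, which makes the origin of the bound $\|L\|^2/\delta^2$ as a quotient of global Bessel bound by local lower bound more transparent. Both yield the same constant.
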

\begin{proof}
Since $\gamma(LM_{i}) > 0$, $V_{i}$ is a closed a subspace of $\mathcal{V}$. Let $\{e_{ij}\}_{j \in J_{i}}$ be an orthonormal basis for $M_{i}\mathcal{K}_{\mathbf{W}}$. By \cite[Corollary 5.3.2]{Christensen (2016)} and the equality $\|(LM_{i})^{\dagger}\|^{-1}=\gamma(LM_{i})$, $\{v_{i}^{-1}Le_{ij}\}_{j \in J_{i}}$ is a frame for $V_{i}$ with lower frame bound $v_{i}^{-2}\gamma(LM_{i})^{2}$. Since $\{e_{ij}\}_{i \in I, j \in J_{i}}$ is an orthonormal basis for $\mathcal{K}_{\mathbf{W}}$, it is easy to see that $\{Le_{ij}\}_{i \in I, j \in J_{i}}$ is a fusion Bessel sequence for $\mathcal{V}$ with upper Bessel bound $\|L\|^{2}$. By Theorem~\ref{T wF marco sii WwF fusion frame system}, $(\mathbf{V},\mathbf{v})$ is a fusion Bessel sequence in $\mathcal{V}$ with upper Bessel bound
$\frac{\|L\|^{2}}{\delta^{2}}$.
\end{proof}
\begin{lem}\label{L Q acotada}
Let $L \in
B(\mathcal{K}_{\mathbf{W}},\mathcal{V})$ be such that
$\gamma(LM_{i}) > 0$ for all $i \in I$. Let
$V_{i}=LM_{i}\mathcal{K}_{\mathbf{W}}$ for each $i \in
I$ and $\mathbf{v}$ be a family of weights. If there exists $\delta > 0$ such that $v_{i} > \delta > 0$ for each $i \in I$, then
\begin{equation}\label{E QA}
Q_{L,\mathbf{v}}: \mathcal{K}_{\mathbf{W}}
\rightarrow \mathcal{K}_{\mathbf{V}}, \,\,\,Q_{L,\mathbf{v}}(f_{j})_{j \in I}=(\frac{1}{v_{i}} L M_i (f_{j})_{j \in I})_{i\in I},
\end{equation}  
is a well defined bounded operator with $\|Q_{L,\mathbf{v}}\| \leq
\frac{\|L\|}{\delta}$.
\end{lem}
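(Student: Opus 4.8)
The plan is to verify directly that the formula for $Q_{L,\mathbf{v}}$ defines an element of $B(\mathcal{K}_{\mathbf{W}},\mathcal{K}_{\mathbf{V}})$ with the stated norm bound. The first point to settle is that the target sequence actually lies in $\mathcal{K}_{\mathbf{V}}=\oplus_{i\in I}V_i$. Since $\gamma(LM_i)>0$, each $V_i=LM_i\mathcal{K}_{\mathbf{W}}$ is a closed subspace of $\mathcal{V}$, and $\frac{1}{v_i}LM_i(f_j)_{j\in I}$ clearly belongs to $V_i$ because $LM_i$ maps into $LM_i\mathcal{K}_{\mathbf{W}}=V_i$. So for each fixed $(f_j)_{j\in I}\in\mathcal{K}_{\mathbf{W}}$ the component $\frac{1}{v_i}LM_i(f_j)_{j\in I}$ is a legitimate element of $V_i$.

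Next I would estimate the $\ell^2$-norm of the image. Using $v_i>\delta>0$, for any $(f_j)_{j\in I}\in\mathcal{K}_{\mathbf{W}}$ we have
\begin{equation*}
\norm{Q_{L,\mathbf{v}}(f_j)_{j\in I}}^2=\sum_{i\in I}\frac{1}{v_i^2}\norm{LM_i(f_j)_{j\in I}}^2\leq\frac{1}{\delta^2}\sum_{i\in I}\norm{LM_i(f_j)_{j\in I}}^2.
\end{equation*}
Now the key observation is that $M_i(f_j)_{j\in I}=(\chi_i(j)f_j)_{j\in I}$, so the vectors $\{M_i(f_j)_{j\in I}\}_{i\in I}$ are pairwise orthogonal in $\mathcal{K}_{\mathbf{W}}$ and $\sum_{i\in I}\norm{M_i(f_j)_{j\in I}}^2=\norm{(f_j)_{j\in I}}^2$. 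Since each $\norm{LM_i(f_j)_{j\in I}}\leq\norm{L}\,\norm{M_i(f_j)_{j\in I}}$, we obtain $\sum_{i\in I}\norm{LM_i(f_j)_{j\in I}}^2\leq\norm{L}^2\norm{(f_j)_{j\in I}}^2$, and hence $\norm{Q_{L,\mathbf{v}}(f_j)_{j\in I}}\leq\frac{\norm{L}}{\delta}\norm{(f_j)_{j\in I}}$. This simultaneously shows that the series defining the image converges (so $Q_{L,\mathbf{v}}$ is well defined as a map into $\mathcal{K}_{\mathbf{V}}$), that it is bounded, and that $\norm{Q_{L,\mathbf{v}}}\leq\frac{\norm{L}}{\delta}$. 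Linearity is immediate from the linearity of each $LM_i$.

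I do not expect a serious obstacle here; the only slightly delicate point is making sure the orthogonality of $\{M_i(f_j)_{j\in I}\}_{i\in I}$ and the Pythagorean identity are invoked correctly so that applying $\norm{L}$ componentwise does not lose a factor depending on $|I|$. Once that is in place the estimate is a one-line computation. Alternatively one could phrase this via the operator $L C$ where $C$ is the natural isometry-like decomposition, but the direct componentwise estimate above is the cleanest route.
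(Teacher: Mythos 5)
Your proposal is correct and follows essentially the same route as the paper: the paper's proof is exactly the one-line estimate $\sum_{i\in I}\|\tfrac{1}{v_i}LM_i(f_j)_{j\in I}\|^2\leq\tfrac{\|L\|^2}{\delta^2}\sum_{i\in I}\|M_i(f_j)_{j\in I}\|^2=\tfrac{\|L\|^2}{\delta^2}\|(f_j)_{j\in I}\|^2$, which is what you carry out (with the Pythagorean identity for the $M_i$ made explicit). Your additional remarks on the components landing in $V_i$ and on linearity are fine but left implicit in the paper.
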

\begin{proof}
If $(f_{j})_{j \in I} \in
\mathcal{K}_{\mathbf{W}}$, then

\centerline{$\sum_{i \in
I}\|\frac{1}{v_{i}}LM_{i}(f_{j})_{j \in I}\|^{2}\leq
\frac{\|L\|^{2}}{\delta^{2}}\sum_{i \in I}\|M_{i}(f_{j})_{j \in
I}\|^{2}=\frac{\|L\|^{2}}{\delta^{2}}\|(f_{i})_{i \in I}\|^{2}.$}
\end{proof}

\begin{lem}\label{L gamma TVv Mk}
Let $({\bf V},{\bf v})$ be a fusion Bessel sequence for
$\mathcal{V}$. Then $\gamma(T_{\mathbf{V},{\bf v}} M_k)=v_{k}$ for
each $k \in I$.
\end{lem}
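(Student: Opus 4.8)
The plan is to compute $T_{\mathbf{V},\mathbf{v}}M_k$ explicitly, identify its kernel, and then evaluate its norm on the orthogonal complement of that kernel, which is where the reduced minimum modulus is computed. First I would observe that for $(f_i)_{i\in I}\in\mathcal{K}_{\mathbf{V}}=\oplus_{i\in I}V_i$ one has $M_k(f_i)_{i\in I}=(\chi_k(i)f_i)_{i\in I}$, hence $T_{\mathbf{V},\mathbf{v}}M_k(f_i)_{i\in I}=v_kf_k$. Therefore $(f_i)_{i\in I}\in\mathrm{Ker}(T_{\mathbf{V},\mathbf{v}}M_k)$ if and only if $v_kf_k=0$, and since $v_k>0$ this means $f_k=0$; thus $\mathrm{Ker}(T_{\mathbf{V},\mathbf{v}}M_k)=\mathrm{Ker}(M_k)=\{(f_i)_{i\in I}\in\mathcal{K}_{\mathbf{V}}:f_k=0\}$. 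Since $M_k$ is an orthogonal projection, the orthogonal complement of this kernel is $\mathrm{Im}(M_k)=M_k\mathcal{K}_{\mathbf{V}}$, the $k$-th summand, which is isometrically identified with $V_k$.

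Next I would restrict $T_{\mathbf{V},\mathbf{v}}M_k$ to $\mathrm{Ker}(T_{\mathbf{V},\mathbf{v}}M_k)^{\perp}=M_k\mathcal{K}_{\mathbf{V}}$. A generic element there is of the form $x=(\chi_k(i)g)_{i\in I}$ with $g\in V_k$, so $\|x\|=\|g\|$ and $T_{\mathbf{V},\mathbf{v}}M_kx=v_kg$, which has norm $v_k\|g\|=v_k\|x\|$, since $V_k\subseteq\mathcal{H}$ carries the norm of $\mathcal{H}$. Hence $\|T_{\mathbf{V},\mathbf{v}}M_kx\|=v_k\|x\|$ for every $x\in\mathrm{Ker}(T_{\mathbf{V},\mathbf{v}}M_k)^{\perp}$, and taking the infimum over unit vectors $x$ in that subspace yields $\gamma(T_{\mathbf{V},\mathbf{v}}M_k)=v_k$, as claimed. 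As a byproduct this shows that $T_{\mathbf{V},\mathbf{v}}M_k$ has closed range $V_k$, consistent with $\gamma(T_{\mathbf{V},\mathbf{v}}M_k)>0$.

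I do not expect any real obstacle here: the content is simply that, after identifying $M_k\mathcal{K}_{\mathbf{V}}$ with $V_k$, the operator $M_k$ is a coordinate projection and $T_{\mathbf{V},\mathbf{v}}$ acts on that coordinate as multiplication by the scalar $v_k$, so $T_{\mathbf{V},\mathbf{v}}M_k$ is $v_k$ times a partial isometry with initial space $V_k$. The only point meriting a word of care is that the infimum defining $\gamma(T_{\mathbf{V},\mathbf{v}}M_k)$ must be over a nonempty set, which is guaranteed by the standing convention that the subspaces of a fusion (Bessel) sequence are nontrivial.
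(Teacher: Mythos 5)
Your proof is correct and follows essentially the same route as the paper: identify $\mathrm{Ker}(T_{\mathbf{V},\mathbf{v}}M_k)$ as the sequences vanishing in the $k$-th coordinate, restrict to its orthogonal complement (sequences supported on the $k$-th coordinate), and observe that there the operator acts as $g\mapsto v_k g$, so the infimum over unit vectors is exactly $v_k$. Your added remark that the infimum is over a nonempty set is a reasonable extra word of care but changes nothing.
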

\begin{proof}
Note that ${\rm Ker}(T_{\mathbf{V},{\bf v}} M_k)=\{(g_{i})_{i \in
I}\in \mathcal{K}_{\mathbf{V}}: g_{k}=0\}$ and

\centerline{${\rm Ker}(T_{\mathbf{V},{\bf v}}
M_k)^{\perp}=\{(g_{i})_{i \in I}\in \mathcal{K}_{\mathbf{V}}:
g_{i}=0 \text{ if } i \neq k\}$.}

\noindent From this

$\gamma(T_{\mathbf{V},{\bf v}} M_k)=\text{inf}\{\|T_{\mathbf{V},{\bf
v}} M_k(g_{i})_{i \in I}\|:(g_{i})_{i \in I}\in
\mathcal{K}_{\mathbf{V}}, \|(g_{i})_{i \in I}\|=1, (g_{i})_{i \in I}
\in {\rm Ker}(T_{\mathbf{V},{\bf
v}} M_k)^{\perp}\}=\text{inf}\{\|v_{k}g_{k}\|:g_{k} \in V_{k},
\|g_{k}\|=1\}=v_{k}$.
\end{proof}
The following proposition gives an alternative for Lemma~\ref{L V Bessel} for a particular operator $L$.
\begin{prop}\label{P A invertible AV Bessel}
Let $(\mathbf{V},{\bf v})$ be a fusion Bessel sequence
for $\mathcal{V}$. Let $A \in
B(\mathcal{V})$, $\widetilde{\mathbf{v}}$ be a family of weights, and $\delta > 0$. If $A$ is invertible and $\delta \leq v_{i}/\widetilde{v}_{i}$ for all $i \in I$, then $(A\mathbf{V},\widetilde{\mathbf{v}})$ is a fusion Bessel
sequence for $\mathcal{V}$ with upper Bessel bound $\frac{\|AT_{\mathbf{V},{\bf v}}\|^{2}\|A\|^{2}}{\delta^{2}}$.
\end{prop}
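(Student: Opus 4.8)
The plan is to reduce Proposition~\ref{P A invertible AV Bessel} to Lemma~\ref{L V Bessel} by choosing the operator $L$ appropriately. The natural candidate is $L = A T_{\mathbf{V},{\bf v}} \in B(\mathcal{K}_{\mathbf{V}},\mathcal{V})$; note that $A$ is invertible, so $\mathrm{Im}(L) = A\,\mathrm{Im}(T_{\mathbf{V},{\bf v}})$, and since $(\mathbf{V},{\bf v})$ is a fusion Bessel sequence $L$ is indeed bounded. With this choice, for each $i \in I$ we have $L M_{i}\mathcal{K}_{\mathbf{V}} = A\,T_{\mathbf{V},{\bf v}} M_{i}\mathcal{K}_{\mathbf{V}} = A V_{i}$, because $T_{\mathbf{V},{\bf v}} M_{i}(g_j)_{j\in I} = v_i g_i$ ranges over $V_i$. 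So the subspaces generated by $L$ in the sense of Lemma~\ref{L V Bessel} are exactly the $AV_i$.

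First I would establish the reduced minimum modulus estimate $\gamma(L M_i) \geq v_i / \|A^{-1}\|$. Since $A$ is invertible, $\mathrm{Ker}(L M_i) = \mathrm{Ker}(T_{\mathbf{V},{\bf v}} M_i)$, and for $x \in \mathrm{Ker}(L M_i)^{\perp}$ with $\|x\|=1$ we have $\|L M_i x\| = \|A T_{\mathbf{V},{\bf v}} M_i x\| \geq \|A^{-1}\|^{-1}\|T_{\mathbf{V},{\bf v}} M_i x\| \geq \|A^{-1}\|^{-1}\gamma(T_{\mathbf{V},{\bf v}} M_i) = \|A^{-1}\|^{-1} v_i$, where the last equality is Lemma~\ref{L gamma TVv Mk}. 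In particular $\gamma(L M_i) > 0$, so $L M_i$ has closed (nonzero, when $V_i \neq \{0\}$) range and $AV_i$ is a closed subspace of $\mathcal{V}$, which is the hypothesis needed to invoke Lemma~\ref{L V Bessel}.

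Next I would verify the hypothesis of Lemma~\ref{L V Bessel} with $\delta' := \delta/\|A^{-1}\|$ in place of its $\delta$: I need $\delta' \leq \widetilde{v}_i^{-1}\gamma(L M_i)$ for all $i$. Using the estimate just obtained and the hypothesis $\delta \leq v_i/\widetilde{v}_i$, we get $\widetilde{v}_i^{-1}\gamma(L M_i) \geq \widetilde{v}_i^{-1} v_i \|A^{-1}\|^{-1} \geq \delta\|A^{-1}\|^{-1} = \delta'$, as required. Then Lemma~\ref{L V Bessel} yields that $(A\mathbf{V},\widetilde{\mathbf{v}})$ is a fusion Bessel sequence for $\mathcal{V}$ with upper bound $\|L\|^2/(\delta')^2 = \|A T_{\mathbf{V},{\bf v}}\|^2 \|A^{-1}\|^2/\delta^2$.

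Here I should flag a discrepancy: the stated bound in the proposition reads $\frac{\|AT_{\mathbf{V},{\bf v}}\|^{2}\|A\|^{2}}{\delta^{2}}$, whereas the argument above produces $\|A^{-1}\|^2$ in the numerator rather than $\|A\|^2$. I would resolve this in favor of $\|A^{-1}\|^2$ (correcting an apparent typo), since the estimate genuinely uses the lower bound $\|Ay\| \geq \|A^{-1}\|^{-1}\|y\|$; alternatively, if one instead intends the crude bound one can note $\|A^{-1}\| $ need not be controlled by $\|A\|$, so the clean statement is with $\|A^{-1}\|$. The main obstacle in the write-up is thus not a deep one but a bookkeeping point: correctly tracking which operator norm ($\|A\|$ for the Bessel/upper side, $\|A^{-1}\|$ for the lower/$\gamma$ side) enters where, and making the choice of $\delta'$ so that Lemma~\ref{L V Bessel} applies verbatim.
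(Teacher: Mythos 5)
Your proof is correct and follows essentially the same route as the paper: reduce to Lemma~\ref{L V Bessel} with $L=AT_{\mathbf{V},{\bf v}}$ and bound $\gamma(AT_{\mathbf{V},{\bf v}}M_i)$ from below using $\gamma(T_{\mathbf{V},{\bf v}}M_i)=v_i$ from Lemma~\ref{L gamma TVv Mk}; the only difference is that you prove the composition estimate directly instead of citing the Ruiz--Stojanoff remark. The discrepancy you flag is real and you resolved it the right way: the paper's proof writes $\gamma(AT_{\mathbf{V},{\bf v}}M_i)\geq \|A\|^{-1}\gamma(T_{\mathbf{V},{\bf v}}M_i)$ where the correct constant is $\gamma(A)=\|A^{-1}\|^{-1}$, and the stated bound $\|AT_{\mathbf{V},{\bf v}}\|^{2}\|A\|^{2}/\delta^{2}$ can actually fail (take $A=cI$ with $0<c<1$ and a single subspace $V_{1}=\mathcal{V}$, $v_1=\widetilde v_1=\delta=1$: the claimed bound is $c^{4}<1$ while the true Bessel bound is $1$), so the correct conclusion is the one you derive, with $\|A^{-1}\|^{2}$ in place of $\|A\|^{2}$.
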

\begin{proof}
By \cite[Remark 2.4 (4)]{Ruiz-Stojanoff (2008)} and Lemma~\ref{L gamma TVv Mk},

\centerline{$\frac{(\gamma(AT_{\mathbf{V},{\bf v}}M_i))^{2}}{\widetilde{v}_{i}^{2}} \geq \frac{\|A\|^{-2}(\gamma(T_{\mathbf{V},{\bf v}}
M_i))^{2}}{\widetilde{v}_{i}^{2}}=\frac{\|A\|^{-2}
v_{i}^{2}}{\widetilde{v}_{i}^{2}}\geq \delta^{2}\|A\|^{-2}$.}

\noindent So, the conclusion follows from Lemma~\ref{L V Bessel}.
\end{proof}

\subsection{Uniqueness and approximate consistent reconstruction}

Assume that $(\mathbf{W},{\bf w})$ is a fusion frame for
$\mathcal{W}$. Consider the samples $T_{\mathbf{W},{\bf
w}}^{\ast}f=(w_{i}\pi_{W_{i}}f)_{i \in I}$ of an unknown data vector $f \in
\mathcal{H}$. Here the aim is to reconstruct $f$ from
these samples using a fusion frame $({\bf V},{\bf v})$ for
$\mathcal{V}$ so that the reconstruction $f_{r} \in \mathcal{V}$ is
a good approximation of $f$. More precisely, for $\e \geq 0$ we ask for the following two
conditions:
\begin{enumerate}
  \item[(i)] \textit{Uniqueness of the reconstructed signal}: If $f, g \in \mathcal{V}$ and $T_{\mathbf{W},{\bf
w}}^{\ast}f=T_{\mathbf{W},{\bf w}}^{\ast}g$, then $f=g$.

  \item[(ii)] $\epsilon$-\textit{consistent reconstruction}: $||T_{\mathbf{W},{\bf w}}^{\ast}f_{r}-T_{\mathbf{W},{\bf w}}^{\ast}f|| \leq \epsilon ||f||$
for all $f \in \mathcal{H}$.

\end{enumerate}
Condition (i) is equivalent to
$\mathcal{V}\cap\mathcal{W}^{\perp}=\{0\}$ (see \cite{Heineken-Morillas (2018)}). If requirement (ii) is satisfied, we call $ f_{r}$  an $\epsilon$-\emph{consistent
reconstruction} of $ f $ in $ \Vsb $. Note that a $ 0 $-consistent
reconstruction is a \emph{consistent reconstruction} as defined in \cite{Heineken-Morillas (2018)}.
\begin{rem}
Under the hypothesis of Theorem~\ref{T dual fusion frame systems}, we observe that if $||T_{\mathbf{W},{\bf w}}^{\ast}f_{r}-T_{\mathbf{W},{\bf w}}^{\ast}f||
\leq \epsilon ||f||$ for all $f \in \mathcal{H}$ then $||T_{{\bf
w}\mathcal{F}}^{\ast}-T_{{\bf w}\mathcal{F}}^{\ast}f|| \leq \epsilon
\widetilde{\beta} ||f||$ for all $f \in \mathcal{H}$. Also, if $||T_{{\bf w}\mathcal{F}}^{\ast}f_r-T_{{\bf w}\mathcal{F}}^{\ast}f|| \leq
\epsilon ||f||$ for all $f \in \mathcal{H}$ then
$||T_{\mathbf{W},{\bf w}}^{\ast}f_{r}-T_{\mathbf{W},{\bf w}}^{\ast}f||
\leq \epsilon \beta ||f||$ for all $f \in \mathcal{H}$.
\end{rem}
Using an oblique dual fusion frame of $(\mathbf{W},{\bf w})$
in $ \Vsb $, we obtain the next bound for $\norm{ f_{r} -
\pi_{ \Vsb \Wsb^{ \perp } } f }$. The proof is similar to the frame
case (see \cite{Diaz-Heineken-Morillas (2023)}).
\begin{prop}\label{P cota reconstr consist dual G}
Let $\mathcal{H}=\mathcal{V}\oplus\mathcal{W}^{\perp}$. Let $(\mathbf{W},{\bf w})$ be a fusion frame for $ \Wsb $ and $(\mathbf{V},{\bf v})$ be an oblique $Q$-dual fusion frame of
$(\mathbf{W},{\bf w})$ in $ \Vsb $. Let $\e \geq 0$. If $ f \in \Hil$ and $ f_{r} $ is an $
\e $-consistent reconstruction of $f$ in $ \Vsb $, then $\norm{ f_{r} - \pi_{ \Vsb \Wsb^{ \perp } } f } \leq \e
\norm{ T_{\mathbf{V},{\bf
v}} Q}\norm{ f }$.
\end{prop}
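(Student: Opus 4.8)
The plan is to exploit the defining identity of an exact oblique $Q$-dual fusion frame, namely $T_{\mathbf{V},{\bf v}}QT^{\ast}_{\mathbf{W},{\bf w}}=\pi_{\mathcal{V},\mathcal{W}^{\perp}}$, and insert the sample $T^{\ast}_{\mathbf{W},{\bf w}}f$ as a hinge between $f_r$ and $\pi_{\mathcal{V},\mathcal{W}^{\perp}}f$. First I would write
\[
f_{r}-\pi_{\mathcal{V},\mathcal{W}^{\perp}}f
 = f_{r}-T_{\mathbf{V},{\bf v}}QT^{\ast}_{\mathbf{W},{\bf w}}f.
\]
The key observation is that $f_{r}\in\Vsb$, and applying the exact duality identity to $f_r$ gives $f_{r}=\pi_{\mathcal{V},\mathcal{W}^{\perp}}f_{r}=T_{\mathbf{V},{\bf v}}QT^{\ast}_{\mathbf{W},{\bf w}}f_{r}$, since $\pi_{\mathcal{V},\mathcal{W}^{\perp}}$ fixes $\Vsb$. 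Substituting this yields
\[
f_{r}-\pi_{\mathcal{V},\mathcal{W}^{\perp}}f
 = T_{\mathbf{V},{\bf v}}QT^{\ast}_{\mathbf{W},{\bf w}}(f_{r}-f)
 = T_{\mathbf{V},{\bf v}}Q\bigl(T^{\ast}_{\mathbf{W},{\bf w}}f_{r}-T^{\ast}_{\mathbf{W},{\bf w}}f\bigr).
\]

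From here the estimate is immediate: take norms and use submultiplicativity of the operator norm, together with the $\e$-consistency hypothesis $\norm{T^{\ast}_{\mathbf{W},{\bf w}}f_{r}-T^{\ast}_{\mathbf{W},{\bf w}}f}\leq\e\norm{f}$, to obtain
\[
\norm{f_{r}-\pi_{\mathcal{V},\mathcal{W}^{\perp}}f}
 \leq \norm{T_{\mathbf{V},{\bf v}}Q}\,\norm{T^{\ast}_{\mathbf{W},{\bf w}}f_{r}-T^{\ast}_{\mathbf{W},{\bf w}}f}
 \leq \e\,\norm{T_{\mathbf{V},{\bf v}}Q}\,\norm{f},
\]
which is exactly the claimed bound.

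I do not anticipate a serious obstacle here; the argument is short and purely formal once one notices the trick of applying the exact reconstruction formula to $f_r$ itself. The only point that requires a moment's care is justifying $f_{r}=\pi_{\mathcal{V},\mathcal{W}^{\perp}}f_{r}$, which follows because $f_r\in\Vsb$ and $(\pi_{\Vsb\Wsb^{\perp}})_{\mid\Vsb}=I_{\Vsb}$ as recorded in the preliminaries on oblique projections. Everything else — that $T_{\mathbf{V},{\bf v}}Q$ is a bounded operator, so that its norm makes sense — is guaranteed by the standing assumptions that $(\mathbf{V},{\bf v})$ is a fusion frame and $Q\in B(\mathcal{K}_{\mathbf{W}},\mathcal{K}_{\mathbf{V}})$. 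This mirrors the frame-case proof in \cite{Diaz-Heineken-Morillas (2023)} as indicated in the statement.
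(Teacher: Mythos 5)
Your proof is correct and is exactly the argument the paper has in mind (the paper omits it, deferring to the frame case in Diaz--Heineken--Morillas): use $T_{\mathbf{V},{\bf v}}QT^{\ast}_{\mathbf{W},{\bf w}}=\pi_{\mathcal{V},\mathcal{W}^{\perp}}$ applied to $f_r\in\mathcal{V}$ to write $f_r-\pi_{\mathcal{V},\mathcal{W}^{\perp}}f=T_{\mathbf{V},{\bf v}}Q\,(T^{\ast}_{\mathbf{W},{\bf w}}f_r-T^{\ast}_{\mathbf{W},{\bf w}}f)$ and then apply the $\epsilon$-consistency bound. No gaps.
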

If $(\mathbf{V},{\bf v})$ is the canonical oblique dual fusion frame of $(\mathbf{W},{\bf w})$ then $T_{\mathbf{V},{\bf
v}} Q=\pi_{\mathcal{V},\mathcal{W}^{\perp}}S_{\mathbf{W},{\bf
w}}^{\dagger}T_{\mathbf{W},{\bf w}}$. We also have, $T_{\mathbf{W},{\bf
w}}^{\dagger}=T_{\mathbf{W},{\bf
w}}^{\ast}S_{\mathbf{W},{\bf
w}}^{\dagger}$. So part (i) of the following result follows from Proposition~\ref{P cota reconstr
consist dual G}. Part (ii) is straightforward.
\begin{thm}\label{T consistent reconstruction}
Let $\mathcal{H}=\mathcal{V}\oplus\mathcal{W}^{\perp}$. Let $(\mathbf{W},{\bf w})$ be a fusion frame for $ \Wsb $, $ f \in \Hil$ and $ \e
\geq 0 $. The following holds:
\begin{enumerate}
\item[(i)] If $ f_{r} $ is an  $ \e $-consistent reconstruction of
$f$ in $ \Vsb $, then $\norm{ f_{r} \!-\! \pi_{ \Vsb \Wsb^{ \perp } } f
} \leq \e\!\norm{ \pi_{ \Vsb \Wsb^{ \perp } }\!}\!\norm{
T_{\mathbf{W},{\bf w}}^{ \dagger }}\!\norm{ f }$.
\item[(ii)] If $\norm{ f_{r} - \pi_{ \Vsb \Wsb^{ \perp } } f } \leq \frac{
\e }{ \norm{ T_{\mathbf{W},{\bf
w}} } } \norm{ f }$, then $ f_{r} $ is
an $ \e $-consistent reconstruction of $f$ in $ \Vsb $.
\end{enumerate}
\end{thm}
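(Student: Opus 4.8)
The plan is to handle the two parts separately: part~(i) by specializing Proposition~\ref{P cota reconstr consist dual G} to the canonical oblique dual fusion frame, and part~(ii) by a short direct estimate.

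For part~(i), I would take $(\mathbf{V},{\bf v})$ to be the canonical oblique dual fusion frame of $(\mathbf{W},{\bf w})$ on $\Vsb$ (available because $\Hil=\Vsb\oplus\Wsb^{\perp}$) and let $Q$ be the operator attached to it, so that, as recorded just before the statement, $T_{\mathbf{V},{\bf v}}Q=\pi_{\Vsb\Wsb^{\perp}}S_{\mathbf{W},{\bf w}}^{\dagger}T_{\mathbf{W},{\bf w}}$. The one piece of real work is then to show $\norm{T_{\mathbf{V},{\bf v}}Q}\leq \norm{\pi_{\Vsb\Wsb^{\perp}}}\,\norm{T_{\mathbf{W},{\bf w}}^{\dagger}}$. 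For that I would use that $S_{\mathbf{W},{\bf w}}$ is selfadjoint, hence so is $S_{\mathbf{W},{\bf w}}^{\dagger}$, together with the identity $T_{\mathbf{W},{\bf w}}^{\dagger}=T_{\mathbf{W},{\bf w}}^{\ast}S_{\mathbf{W},{\bf w}}^{\dagger}$ noted above: taking adjoints gives $(T_{\mathbf{W},{\bf w}}^{\dagger})^{\ast}=S_{\mathbf{W},{\bf w}}^{\dagger}T_{\mathbf{W},{\bf w}}$, so that $T_{\mathbf{V},{\bf v}}Q=\pi_{\Vsb\Wsb^{\perp}}(T_{\mathbf{W},{\bf w}}^{\dagger})^{\ast}$; submultiplicativity of the operator norm and $\norm{(T_{\mathbf{W},{\bf w}}^{\dagger})^{\ast}}=\norm{T_{\mathbf{W},{\bf w}}^{\dagger}}$ then yield the bound. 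Feeding it into Proposition~\ref{P cota reconstr consist dual G} gives $\norm{f_{r}-\pi_{\Vsb\Wsb^{\perp}}f}\leq \e\,\norm{T_{\mathbf{V},{\bf v}}Q}\,\norm{f}\leq \e\,\norm{\pi_{\Vsb\Wsb^{\perp}}}\,\norm{T_{\mathbf{W},{\bf w}}^{\dagger}}\,\norm{f}$, which is part~(i).

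For part~(ii), the key observation is that $T_{\mathbf{W},{\bf w}}^{\ast}$ ignores the $\Wsb^{\perp}$-component of its argument: since each $W_{i}\subseteq\Wsb$ we have $\pi_{W_{i}}=\pi_{W_{i}}\pi_{\Wsb}$, and since ${\rm Ker}(\pi_{\Vsb\Wsb^{\perp}})=\Wsb^{\perp}$ one has $\pi_{\Wsb}\pi_{\Vsb\Wsb^{\perp}}=\pi_{\Wsb}$; hence $\pi_{W_{i}}\pi_{\Vsb\Wsb^{\perp}}=\pi_{W_{i}}$ for every $i\in I$, and therefore $T_{\mathbf{W},{\bf w}}^{\ast}\pi_{\Vsb\Wsb^{\perp}}=T_{\mathbf{W},{\bf w}}^{\ast}$. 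Consequently $T_{\mathbf{W},{\bf w}}^{\ast}f_{r}-T_{\mathbf{W},{\bf w}}^{\ast}f=T_{\mathbf{W},{\bf w}}^{\ast}(f_{r}-\pi_{\Vsb\Wsb^{\perp}}f)$, so under the hypothesis $\norm{T_{\mathbf{W},{\bf w}}^{\ast}f_{r}-T_{\mathbf{W},{\bf w}}^{\ast}f}\leq \norm{T_{\mathbf{W},{\bf w}}}\,\norm{f_{r}-\pi_{\Vsb\Wsb^{\perp}}f}\leq \e\norm{f}$, i.e.\ $f_{r}$ is an $\e$-consistent reconstruction of $f$ in $\Vsb$.

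Neither part hides a genuine difficulty, and (ii) is entirely routine. The only place asking for a little care is the norm estimate in (i): one must first rewrite $T_{\mathbf{V},{\bf v}}Q$ in the form $\pi_{\Vsb\Wsb^{\perp}}(T_{\mathbf{W},{\bf w}}^{\dagger})^{\ast}$, because the bound obtained directly from the expression $\pi_{\Vsb\Wsb^{\perp}}S_{\mathbf{W},{\bf w}}^{\dagger}T_{\mathbf{W},{\bf w}}$ would only give the weaker $\norm{\pi_{\Vsb\Wsb^{\perp}}}\,\norm{S_{\mathbf{W},{\bf w}}^{\dagger}}\,\norm{T_{\mathbf{W},{\bf w}}}$.
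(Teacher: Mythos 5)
Your proposal is correct and follows essentially the same route as the paper: part (i) is obtained by applying Proposition~\ref{P cota reconstr consist dual G} to the canonical oblique dual fusion frame together with the identity $T_{\mathbf{W},{\bf w}}^{\dagger}=T_{\mathbf{W},{\bf w}}^{\ast}S_{\mathbf{W},{\bf w}}^{\dagger}$ (your adjoint rewriting $T_{\mathbf{V},{\bf v}}Q=\pi_{\Vsb\Wsb^{\perp}}(T_{\mathbf{W},{\bf w}}^{\dagger})^{\ast}$ is exactly the step the paper leaves implicit, and your remark that the naive bound would be weaker is the right reason for it), and part (ii) is the ``straightforward'' estimate via $T_{\mathbf{W},{\bf w}}^{\ast}\pi_{\Vsb\Wsb^{\perp}}=T_{\mathbf{W},{\bf w}}^{\ast}$, which is indeed the needed observation since the hypothesis only controls $\norm{f_{r}-\pi_{\Vsb\Wsb^{\perp}}f}$ and not $\norm{f_{r}-f}$.
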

From the previous theorem we get the next
corollary that links $\e$-consistent
reconstruction with approximate oblique duality.
\begin{cor}
Let $\mathcal{H}=\mathcal{V}\oplus\mathcal{W}^{\perp}$. Let $(\mathbf{W},{\bf w})$ be a fusion frame for $ \Wsb $,
$(\mathbf{V},{\bf v})$ be a fusion frame for $ \Vsb $, $Q \in
B(\mathcal{K}_{\mathbf{W}}, \mathcal{K}_{\mathbf{V}})$ and $\e \geq
0 $. Then
\begin{enumerate}
\item[(i)]
If $f_{r}  = T_{{\bf V},{\bf v}}QT^{\ast}_{\mathbf{W},{\bf
w}} f $ is an
$ \frac{ \e }{ || \pi_{ \Vsb \Wsb^{ \perp } } || ||T_{\mathbf{W},{\bf
w}}^{ \dagger} || } $-consistent reconstruction of $f$
 in $ \Vsb $, for each $f \in \mathcal{H}$, then $(\mathbf{W},{\bf w})$ and $(\mathbf{V},{\bf v})$ are $ \e $-approximate oblique $Q$-dual
fusion frames.

\item[(ii)]
If $(\mathbf{V},{\bf v})$ is an $\e$-approximate oblique $Q$-dual fusion frame of $(\mathbf{W},{\bf w})$ in $\mathcal{V}$, then $ f_{r}  = T_{{\bf V},{\bf v}}Q T^{\ast}_{{\bf W},{\bf w}} f $ is an $ \e || T_{{\bf W},{\bf w}} ||
$-consistent reconstruction of $f$ in $ \Vsb $, for each $f \in
\mathcal{H}$. \qedhere
\end{enumerate}
\end{cor}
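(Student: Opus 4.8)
The plan is to read off both parts directly from Theorem~\ref{T consistent reconstruction}, together with the characterization of approximate oblique duality in Lemma~\ref{L equivalencias} and the symmetry recorded right after Definition~\ref{D oblique fusion frame dual}; no genuinely new argument is needed, so this will be a short deduction.

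For part (i), I would first note that $f_{r}=T_{{\bf V},{\bf v}}QT^{\ast}_{\mathbf{W},{\bf w}}f$ lies in $\Vsb$: since $(\mathbf{V},{\bf v})$ is a fusion frame for $\Vsb$ we have ${\rm Im}(T_{{\bf V},{\bf v}})=\Vsb$, which is what legitimizes calling $f_{r}$ a reconstruction in $\Vsb$ and feeding it to Theorem~\ref{T consistent reconstruction}. Then I apply Theorem~\ref{T consistent reconstruction}(i) with $\e/\bigl(\norm{\pi_{\Vsb\Wsb^{\perp}}}\,\norm{T^{\dagger}_{\mathbf{W},{\bf w}}}\bigr)$ in the role of its $\e$: by hypothesis $f_{r}$ is a consistent reconstruction with exactly that constant, so the conclusion of that theorem, after the two norm factors cancel, gives $\norm{f_{r}-\pi_{\Vsb\Wsb^{\perp}}f}\le\e\norm{f}$ for every $f\in\Hil$. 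This is precisely condition (i) of Lemma~\ref{L equivalencias}, and since $(\mathbf{W},{\bf w})$ and $(\mathbf{V},{\bf v})$ are fusion frames, the closing statement of that lemma yields that $(\mathbf{V},{\bf v})$ is an $\e$-approximate oblique $Q$-dual fusion frame of $(\mathbf{W},{\bf w})$ on $\Vsb$; the observation following Definition~\ref{D oblique fusion frame dual} then upgrades this to the symmetric assertion, namely that $(\mathbf{W},{\bf w})$ and $(\mathbf{V},{\bf v})$ are $\e$-approximate oblique $Q$-dual fusion frames.

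For part (ii), I would run the same chain the other way. From the assumption that $(\mathbf{V},{\bf v})$ is an $\e$-approximate oblique $Q$-dual fusion frame of $(\mathbf{W},{\bf w})$ on $\Vsb$, Definition~\ref{D oblique fusion frame dual} (equivalently, Lemma~\ref{L equivalencias}(i)) gives $\norm{T_{{\bf V},{\bf v}}QT^{\ast}_{\mathbf{W},{\bf w}}f-\pi_{\Vsb\Wsb^{\perp}}f}\le\e\norm{f}$ for all $f\in\Hil$; rewriting the right-hand side as $\bigl(\e\norm{T_{\mathbf{W},{\bf w}}}\bigr)\norm{f}/\norm{T_{\mathbf{W},{\bf w}}}$ and noting once more that $f_{r}\in\Vsb$ because ${\rm Im}(T_{{\bf V},{\bf v}})=\Vsb$, I apply Theorem~\ref{T consistent reconstruction}(ii) with $\e\norm{T_{\mathbf{W},{\bf w}}}$ in the role of its $\e$. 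Its conclusion is exactly that $f_{r}$ is an $\e\norm{T_{\mathbf{W},{\bf w}}}$-consistent reconstruction of $f$ in $\Vsb$, for each $f\in\Hil$.

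The whole argument is bookkeeping; the only two points worth a line of care are the membership $f_{r}\in\Vsb$ (so that $f_{r}$ qualifies as a reconstruction in $\Vsb$ and Theorem~\ref{T consistent reconstruction} applies) and keeping track of the multiplicative constants so that the factor $\norm{\pi_{\Vsb\Wsb^{\perp}}}\,\norm{T^{\dagger}_{\mathbf{W},{\bf w}}}$ in (i) and the factor $\norm{T_{\mathbf{W},{\bf w}}}$ in (ii) cancel exactly against the constants in Theorem~\ref{T consistent reconstruction} (the degenerate case $\Wsb=\{0\}$, which forces $\Vsb=\{0\}$, being trivial). I do not expect any real obstacle beyond that.
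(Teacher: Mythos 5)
Your proposal is correct and follows exactly the route the paper intends: the corollary is stated as an immediate consequence of Theorem~3.13 (applying part (i) with the rescaled constant and part (ii) with $\epsilon\|T_{\mathbf{W},\mathbf{w}}\|$), which is precisely your bookkeeping argument. The two points you flag — that $f_r\in\mathcal{V}$ because ${\rm Im}(T_{\mathbf{V},\mathbf{v}})=\mathcal{V}$, and the exact cancellation of the constants — are the only content, and you handle both correctly.
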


\section{Approximate oblique dual families}

In \cite{Heineken-Morillas-Benavente-Zakowicz (2014),
Heineken-Morillas (2014), Heineken-Morillas (2018)} it is shown that there is a link between (oblique) component preserving dual
fusion frames and the left inverses of the analysis
operator. We will show that similar results hold for $ \e $-approximate oblique component preserving dual
fusion frames.

\begin{lem}\label{L V,v dual fusion frame entonces Vi=ApiWj}
Let $\mathcal{H}=\mathcal{V}\oplus\mathcal{W}^{\perp}$. Let
$(\mathbf{W},{\bf w})$ be a fusion frame for $\mathcal{W}$ and $\e \geq 0.$ If
$({\bf V},{\bf v})$ is an $\epsilon$-approximate oblique component
preserving $Q$-dual fusion frame of $(\mathbf{W},{\bf w})$ on
$\mathcal{V}$ then $V_{i}=LM_{i}\mathcal{K}_{\mathbf{W}}$ for each
$i\in I$ for $L=T_{\mathbf{V},{\bf v}}Q \in
\mathfrak{L}_{T^{\ast}_{\mathbf{W},{\bf w}},\epsilon}^{\mathcal{V},\mathcal{W}^{\perp}}$.
\end{lem}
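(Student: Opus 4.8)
The plan is to show first that $L=T_{\mathbf{V},{\bf v}}Q$ lies in $\mathfrak{L}_{T^{\ast}_{\mathbf{W},{\bf w}},\epsilon}^{\mathcal{V},\mathcal{W}^{\perp}}$, and then to identify the subspaces $V_i$ with $LM_{i}\mathcal{K}_{\mathbf{W}}$. For the first part I would start from the defining inequality $\|T_{\mathbf{V},{\bf v}}QT^{\ast}_{\mathbf{W},{\bf w}}-\pi_{\mathcal{V},\mathcal{W}^{\perp}}\|\leq\epsilon$, which is exactly $\|LT^{\ast}_{\mathbf{W},{\bf w}}-\pi_{\mathcal{V},\mathcal{W}^{\perp}}\|\leq\epsilon$; since $\mathrm{Ker}(T^{\ast}_{\mathbf{W},{\bf w}})=\mathcal{W}^{\perp}$ because $(\mathbf{W},{\bf w})$ is a fusion frame for $\mathcal{W}$, the only thing left to verify for membership in $\mathfrak{L}_{T^{\ast}_{\mathbf{W},{\bf w}},\epsilon}^{\mathcal{V},\mathcal{W}^{\perp}}$ is $\mathrm{Im}(L)=\mathcal{V}$. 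The inclusion $\mathrm{Im}(L)\subseteq\mathcal{V}$ is immediate since $\mathrm{Im}(T_{\mathbf{V},{\bf v}})=\mathcal{V}$ ($(\mathbf{V},{\bf v})$ being a fusion frame for $\mathcal{V}$); for the reverse inclusion I would use that $(T_{\mathbf{V},{\bf v}}QT^{\ast}_{\mathbf{W},{\bf w}})_{|\mathcal{V}}$ is invertible on $\mathcal{V}$ (this is the Neumann-type observation recorded just before the theorem on oblique duals from approximate ones, valid for $\epsilon<1$; if $\epsilon\geq 1$ one should note that surjectivity of $L$ onto $\mathcal V$ is actually part of the standing hypothesis that $(\mathbf V,{\bf v})$ is a fusion frame together with the component-preserving condition, so $\mathrm{Im}(L)=\mathcal V$ holds regardless), so that every $f\in\mathcal{V}$ is of the form $L(T^{\ast}_{\mathbf{W},{\bf w}}g)\in\mathrm{Im}(L)$.

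For the second part, the identity $V_{i}=LM_{i}\mathcal{K}_{\mathbf{W}}$ should come directly from unwinding the component-preserving hypothesis. By definition $Q$ component preserving means $QM_{i,\mathbf{W}}\mathcal{K}_{\mathbf{W}}=M_{i,\mathbf{V}}\mathcal{K}_{\mathbf{V}}$ for each $i\in I$. Applying $T_{\mathbf{V},{\bf v}}$ to both sides gives $LM_{i,\mathbf{W}}\mathcal{K}_{\mathbf{W}}=T_{\mathbf{V},{\bf v}}M_{i,\mathbf{V}}\mathcal{K}_{\mathbf{V}}$. Now I would invoke the remark in the preliminaries that for $A\in B(\mathcal{H},\mathcal{K})$ one has $AW_{j}=\{AT_{\mathbf{W},{\bf w}}M_{j}(f_{i})_{i\in I}\}$, which in the present notation (applied to $A=I_{\mathcal{V}}$ and the fusion frame $(\mathbf{V},{\bf v})$) yields $T_{\mathbf{V},{\bf v}}M_{i,\mathbf{V}}\mathcal{K}_{\mathbf{V}}=V_{i}$. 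Chaining these equalities gives $V_{i}=LM_{i}\mathcal{K}_{\mathbf{W}}$, which is what we want.

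The main obstacle I anticipate is bookkeeping around the surjectivity claim $\mathrm{Im}(L)=\mathcal{V}$: one has to be careful that this is genuinely available. In the $\epsilon<1$ regime it follows cleanly from the invertibility of $(T_{\mathbf{V},{\bf v}}QT^{\ast}_{\mathbf{W},{\bf w}})_{|\mathcal{V}}$, but in general the cleanest route is probably to observe that $V_{i}=T_{\mathbf{V},{\bf v}}M_{i,\mathbf{V}}\mathcal{K}_{\mathbf{V}}=T_{\mathbf{V},{\bf v}}QM_{i,\mathbf{W}}\mathcal{K}_{\mathbf{W}}=LM_{i,\mathbf{W}}\mathcal{K}_{\mathbf{W}}$ first, and then deduce $\mathrm{Im}(L)=\mathcal{V}$ from $\overline{\mathrm{span}}\bigcup_{i}V_{i}=\mathcal{V}$ (since $(\mathbf{V},{\bf v})$ is a fusion frame for $\mathcal{V}$) together with $\mathrm{Im}(L)\supseteq\bigcup_{i}LM_{i,\mathbf{W}}\mathcal{K}_{\mathbf{W}}=\bigcup_{i}V_{i}$ and closedness considerations — or simply note $\mathrm{Im}(L)\supseteq\mathrm{Im}(LM_{I,\mathbf W})=\mathrm{Im}(T_{\mathbf V,{\bf v}}QM_{I,\mathbf W})$ which, since $QM_{I,\mathbf W}=M_{I,\mathbf V}Q$ and $M_{I,\mathbf V}$ acts as identity-type averaging recovering all of $\mathcal K_{\mathbf V}$ in the limit, gives $\mathrm{Im}(L)=\mathrm{Im}(T_{\mathbf V,{\bf v}})=\mathcal V$. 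Either way, once $V_{i}=LM_{i}\mathcal{K}_{\mathbf{W}}$ and $\mathrm{Im}(L)=\mathcal V$ are in hand, the membership $L\in\mathfrak{L}_{T^{\ast}_{\mathbf{W},{\bf w}},\epsilon}^{\mathcal{V},\mathcal{W}^{\perp}}$ follows and the proof is complete.
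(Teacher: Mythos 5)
Your proposal is correct and follows essentially the same route as the paper: the inequality $\|LT^{\ast}_{\mathbf{W},\mathbf{w}}-\pi_{\mathcal{V},\mathcal{W}^{\perp}}\|\leq\epsilon$ is immediate from the definition, and $LM_{i}\mathcal{K}_{\mathbf{W}}=T_{\mathbf{V},\mathbf{v}}QM_{i,\mathbf{W}}\mathcal{K}_{\mathbf{W}}=T_{\mathbf{V},\mathbf{v}}M_{i,\mathbf{V}}\mathcal{K}_{\mathbf{V}}=V_{i}$ by component preservation. The paper's proof is in fact terser than yours — it simply asserts $L\in\mathfrak{L}_{T^{\ast}_{\mathbf{W},\mathbf{w}},\epsilon}^{\mathcal{V},\mathcal{W}^{\perp}}$ without discussing $\mathrm{Im}(L)=\mathcal{V}$ at all — so your extra care about surjectivity (the Neumann argument for $\epsilon<1$; note, though, that your final alternative via $M_{I,\mathbf{W}}$ is circular since $M_{I,\mathbf{W}}$ is the identity) goes beyond, rather than departs from, the published argument.
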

\begin{proof}
Let $Q\in B(\mathcal{K}_{\mathbf{W}},\mathcal{K}_{\mathbf{V}})$ be
component preserving such that $||T_{\mathbf{V},{\bf
v}}QT^{\ast}_{\mathbf{W},{\bf
w}}-\pi_{\mathcal{V},\mathcal{W}^{\perp}}|| \leq \epsilon$ and $L=T_{\mathbf{V},{\bf v}}Q$. Then
$L\in\mathfrak{L}_{T^{\ast}_{\mathbf{W},{\bf w}},\epsilon}^{\mathcal{V},\mathcal{W}^{\perp}}$. Since $Q$ is component
preserving, $LM_{i}\mathcal{K}_{\mathbf{W}}=T_{\mathbf{V},{\bf
v}}QM_{i}\mathcal{K}_{\mathbf{W}}=V_{i}$ for each $i \in
I$.\end{proof}
Reciprocally:
\begin{lem}\label{L Vi=ApiWj entonces V,v dual fusion frame}
Let $\mathcal{H}=\mathcal{V}\oplus\mathcal{W}^{\perp}$ and $\e \geq 0.$ Let
$(\mathbf{W},\mathbf{w})$ be a fusion frame for $\mathcal{W}$ and $L
\in \mathfrak{L}_{T^{\ast}_{\mathbf{W},{\bf w}},\epsilon}^{\mathcal{V},\mathcal{W}^{\perp}}$ such that
$V_{i}=LM_{i}\mathcal{K}_{\mathbf{W}}$ for each $i \in I$. Assume that $(\mathbf{V},\mathbf{v})$ is a fusion Bessel sequence for
$\mathcal{V}$ and $Q_{L,{\bf v}}$ defined in (\ref{E QA}) is a well defined bounded operator. Then
$(\mathbf{V},\mathbf{v})$ is an $\epsilon$-approximate
 oblique component preserving $Q_{L,\mathbf{v}}$-dual fusion frame of
$(\mathbf{W},\mathbf{w})$ on $\mathcal{V}$.
\end{lem}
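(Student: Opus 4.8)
The plan is to verify that $Q_{L,\mathbf{v}}$ witnesses the $\epsilon$-approximate oblique component preserving duality, which amounts to checking two things: first that $T_{\mathbf{V},\mathbf{v}}Q_{L,\mathbf{v}}T^{\ast}_{\mathbf{W},\mathbf{w}}$ is $\epsilon$-close to $\pi_{\mathcal{V},\mathcal{W}^{\perp}}$ in operator norm, and second that $Q_{L,\mathbf{v}}$ is component preserving, i.e. $Q_{L,\mathbf{v}}M_{i,\mathbf{W}}\mathcal{K}_{\mathbf{W}} = M_{i,\mathbf{V}}\mathcal{K}_{\mathbf{V}}$ for each $i\in I$. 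The central computation is that $T_{\mathbf{V},\mathbf{v}}Q_{L,\mathbf{v}} = L$. Indeed, applying the definition (\ref{E QA}) of $Q_{L,\mathbf{v}}$ to $(f_j)_{j\in I}\in\mathcal{K}_{\mathbf{W}}$ gives $Q_{L,\mathbf{v}}(f_j)_{j\in I} = (\tfrac{1}{v_i}LM_i(f_j)_{j\in I})_{i\in I}\in\mathcal{K}_{\mathbf{V}}$, and then $T_{\mathbf{V},\mathbf{v}}$ multiplies the $i$-th component by $v_i$ and sums, yielding $\sum_{i\in I}v_i\cdot\tfrac{1}{v_i}LM_i(f_j)_{j\in I} = L\sum_{i\in I}M_i(f_j)_{j\in I} = L(f_j)_{j\in I}$, since $\sum_{i\in I}M_i = I_{\mathcal{K}_{\mathbf{W}}}$ (the $M_i$ form a resolution of the identity on $\mathcal{K}_{\mathbf{W}}$). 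One must note here that the sum converges appropriately and $L$ is bounded so it can be pulled out; this is routine.

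Given $T_{\mathbf{V},\mathbf{v}}Q_{L,\mathbf{v}} = L$, the norm estimate is immediate from the hypothesis $L\in\mathfrak{L}_{T^{\ast}_{\mathbf{W},\mathbf{w}},\epsilon}^{\mathcal{V},\mathcal{W}^{\perp}}$, which by definition says exactly $\|LT^{\ast}_{\mathbf{W},\mathbf{w}} - \pi_{\mathcal{V},\mathcal{W}^{\perp}}\|\leq\epsilon$; substituting $L = T_{\mathbf{V},\mathbf{v}}Q_{L,\mathbf{v}}$ gives (\ref{E TvQTw*=I}) with $Q = Q_{L,\mathbf{v}}$. For the component preserving property, I would compute $Q_{L,\mathbf{v}}M_{j,\mathbf{W}}\mathcal{K}_{\mathbf{W}}$: applying $M_{j,\mathbf{W}}$ first kills all components except the $j$-th, and then $Q_{L,\mathbf{v}}$ sends $M_{j,\mathbf{W}}(f_i)_{i\in I}$ to the vector whose $i$-th component is $\tfrac{1}{v_i}LM_iM_{j,\mathbf{W}}(f_i)_{i\in I}$, which is zero unless $i = j$ (since $M_iM_j = 0$ for $i\neq j$). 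Hence the image lands in $M_{j,\mathbf{V}}\mathcal{K}_{\mathbf{V}}$, and the $j$-th component ranges over $\tfrac{1}{v_j}LM_j\mathcal{K}_{\mathbf{W}} = \tfrac{1}{v_j}V_j = V_j$ by the hypothesis $V_j = LM_j\mathcal{K}_{\mathbf{W}}$. Thus $Q_{L,\mathbf{v}}M_{j,\mathbf{W}}\mathcal{K}_{\mathbf{W}} = M_{j,\mathbf{V}}\mathcal{K}_{\mathbf{V}}$, so $Q_{L,\mathbf{v}}$ is component preserving.

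Finally I would invoke the standing hypotheses that $(\mathbf{V},\mathbf{v})$ is a fusion Bessel sequence for $\mathcal{V}$ and that $Q_{L,\mathbf{v}}$ is a well-defined bounded operator, so that all the operators appearing are genuinely bounded and Definition~\ref{D oblique fusion frame dual} (together with its component preserving refinement) applies; combined with Remark~\ref{Obs definicion de mdoa}(ii) when $\epsilon<1$ (though the statement allows general $\epsilon\geq 0$ and only asserts Bessel hypotheses, so one should be careful to phrase the conclusion exactly as the approximate oblique component preserving $Q_{L,\mathbf{v}}$-dual fusion frame property as defined). I do not anticipate a serious obstacle here — the only point requiring mild care is the interchange of $L$ with the (possibly infinite) sum $\sum_i M_i$, which is justified by boundedness of $L$ and the orthogonality of the ranges of the $M_i$, i.e. by continuity of $L$ applied to the unconditionally convergent series $\sum_i M_i(f_j)_{j\in I}$.
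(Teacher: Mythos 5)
Your proof is correct and follows essentially the same route as the paper's: establish $T_{\mathbf{V},\mathbf{v}}Q_{L,\mathbf{v}}=L$, check directly that $Q_{L,\mathbf{v}}$ is component preserving using $V_i=LM_i\mathcal{K}_{\mathbf{W}}$, and read off the norm bound from $L\in\mathfrak{L}_{T^{\ast}_{\mathbf{W},\mathbf{w}},\epsilon}^{\mathcal{V},\mathcal{W}^{\perp}}$. The only detail the paper adds that you leave implicit is that ${\rm Im}(T_{\mathbf{V},\mathbf{v}})=\{\sum_i g_i: g_i\in V_i\}={\rm Im}(L)=\mathcal{V}$, which upgrades the assumed fusion Bessel sequence $(\mathbf{V},\mathbf{v})$ to a fusion frame for $\mathcal{V}$ as required by Definition~\ref{D oblique fusion frame dual}.
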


\begin{proof}
We can conclude from the hypotheses that $\{\sum_{i \in I}g_{i}: g_{i} \in V_{i} \}={\rm
Im}(L)=\mathcal{V}$, $Q_{L,{\bf v}}$ is component preserving and
$L=T_{\mathbf{V},{\bf v}}Q_{L,{\bf v}}$. Since
$L\in\mathfrak{L}_{T^{\ast}_{\mathbf{W},{\bf w}},\epsilon}^{\mathcal{V},\mathcal{W}^{\perp}}$, $||T_{\mathbf{V},{\bf v}}Q_{L,{\bf v}}T^{\ast}_{\mathbf{W},{\bf
w}}-\pi_{\mathcal{V},\mathcal{W}^{\perp}}|| \leq \epsilon$. Thus
$(\mathbf{V},{\bf v})$ is an $\epsilon$-approximate
 oblique component preserving $Q_{L,{\bf v}}$-dual fusion frame of
$(\mathbf{W},{\bf w})$ on $\mathcal{V}$.
\end{proof}
In Lemmas~\ref{L V Bessel} and \ref{L Q acotada} one can find sufficient conditions for $(\mathcal{V},v)$ being a fusion Bessel sequence for $\mathcal{V}$ and for $Q_{L,v}$ being a well
defined bounded operator in Lemma~\ref{L Vi=ApiWj entonces V,v dual
fusion frame}. For the case in which $\mathcal{W}$ and $\mathcal{V}$
are finite-dimensional, Lemma~\ref{L V,v dual fusion frame entonces
Vi=ApiWj} and Lemma~\ref{L Vi=ApiWj entonces V,v dual fusion frame}
lead to the following characterization:
\begin{thm}\label{T V,v dual fusion
frame sii Vi=ApiWj} Let $\mathcal{W}$ and $\mathcal{V}$ be
finite-dimensional subspaces of the Hilbert space $\mathcal{H}$ such
that $\mathcal{H}=\mathcal{V}\oplus \mathcal{W}^{\perp}$ and $\epsilon \geq 0$. Let
$(\mathbf{W},{\bf w})$ be a fusion frame for $\mathcal{W}$. Then
$({\bf V},{\bf v})$ is an $\epsilon$-approximate oblique component preserving $Q$-dual
fusion frame of $(\mathbf{W},{\bf w})$ on $\mathcal{V}$ if and only
if $V_{i}=LM_{i}\mathcal{K}_{\mathbf{W}}$ for each $i\in I$ and
$Q=Q_{L,{\bf v}},$ for some $L\in\mathfrak{L}_{T^{\ast}_{\mathbf{W},{\bf w}},
\epsilon}^{\mathcal{V},\mathcal{W}^{\perp}}$ where ${\rm
Im}(L)=\mathcal{V}$. Furthermore, any element of $L\in\mathfrak{L}_{T^{\ast}_{\mathbf{W},{\bf w}},
\epsilon}^{\mathcal{V},\mathcal{W}^{\perp}}$ with ${\rm
Im}(L)=\mathcal{V}$ is of the form $T_{{\bf V},{\bf v}}Q$ where
$({\bf V},{\bf v})$ is some $\epsilon$-approximate oblique component preserving $Q$-dual
fusion frame of $(\mathbf{W},{\bf w})$ on $\mathcal{V}$.
\end{thm}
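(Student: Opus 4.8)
The plan is to prove the claimed characterization (Theorem~\ref{T V,v dual fusion frame sii Vi=ApiWj}) as a finite-dimensional refinement of the two preceding lemmas, whose combination almost gives the statement but leaves a gap about the Bessel/boundedness hypotheses. The key observation is that in finite dimensions every fusion sequence indexed by a finite set is automatically a fusion Bessel sequence and every linear operator between the relevant finite-dimensional spaces is bounded, so the hypotheses ``$(\mathbf{V},{\bf v})$ is a fusion Bessel sequence'' and ``$Q_{L,{\bf v}}$ is a well defined bounded operator'' in Lemma~\ref{L Vi=ApiWj entonces V,v dual fusion frame} become vacuous; this is what upgrades the two one-directional lemmas into an equivalence.

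For the forward implication I would start from an $\epsilon$-approximate oblique component preserving $Q$-dual fusion frame $({\bf V},{\bf v})$ and apply Lemma~\ref{L V,v dual fusion frame entonces Vi=ApiWj} directly: it yields $L:=T_{\mathbf{V},{\bf v}}Q\in\mathfrak{L}_{T^{\ast}_{\mathbf{W},{\bf w}},\epsilon}^{\mathcal{V},\mathcal{W}^{\perp}}$ with $V_{i}=LM_{i}\mathcal{K}_{\mathbf{W}}$ for all $i$, and since $\mathfrak{L}_{T^{\ast}_{\mathbf{W},{\bf w}},\epsilon}^{\mathcal{V},\mathcal{W}^{\perp}}$ is by definition the set of $\epsilon$-approximate oblique left inverses \emph{with image equal to $\mathcal{V}$}, the condition ${\rm Im}(L)=\mathcal{V}$ is built in. The one remaining point is to check $Q=Q_{L,{\bf v}}$, i.e.\ that the block-diagonal component preserving operator $Q$ must coincide with the canonical operator of formula~(\ref{E QA}) attached to $L$; this follows because $Q$ component preserving means $QM_{i}\mathcal{K}_{\mathbf{W}}=M_{i,\mathbf{V}}\mathcal{K}_{\mathbf{V}}$, and on each component the synthesis operator $T_{\mathbf{V},{\bf v}}$ restricted to the $i$-th block acts by $g\mapsto v_{i}g$ on $V_i$, so composing with $L=T_{\mathbf{V},{\bf v}}Q$ forces $Q$ on the $i$-th block to be $\frac{1}{v_i}LM_i$, which is exactly $Q_{L,{\bf v}}$.

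For the reverse implication and the ``furthermore'' clause I would take any $L\in\mathfrak{L}_{T^{\ast}_{\mathbf{W},{\bf w}},\epsilon}^{\mathcal{V},\mathcal{W}^{\perp}}$ with ${\rm Im}(L)=\mathcal{V}$, set $V_{i}=LM_{i}\mathcal{K}_{\mathbf{W}}$ and pick any weights ${\bf v}$. Since everything in sight is finite-dimensional, $(\mathbf{V},{\bf v})$ is automatically a fusion Bessel sequence for $\mathcal{V}$ and $Q_{L,{\bf v}}$ of~(\ref{E QA}) is automatically a well defined bounded operator, so the hypotheses of Lemma~\ref{L Vi=ApiWj entonces V,v dual fusion frame} are met and that lemma gives that $(\mathbf{V},{\bf v})$ is an $\epsilon$-approximate oblique component preserving $Q_{L,{\bf v}}$-dual fusion frame of $(\mathbf{W},{\bf w})$ on $\mathcal{V}$; moreover, as noted in the proof of that lemma, $L=T_{\mathbf{V},{\bf v}}Q_{L,{\bf v}}$, which is the assertion of the last sentence. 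I would also remark (invoking Remark~\ref{Obs definicion de mdoa}(ii) when $\epsilon<1$, or otherwise noting that $L$ having image $\mathcal{V}$ forces the spanning condition $\cup_i V_i$ to span $\mathcal{V}$) that $({\bf V},{\bf v})$ is indeed a genuine fusion frame for $\mathcal{V}$, so the terminology is consistent. The main obstacle is essentially bookkeeping: making the identification $Q=Q_{L,{\bf v}}$ precise — one must unwind the definitions of block-diagonal/component preserving operators and of the component maps $Q_j$ carefully enough to see that a component preserving $Q$ is \emph{uniquely} determined by $L=T_{\mathbf{V},{\bf v}}Q$ together with the weights, rather than merely compatible with it.
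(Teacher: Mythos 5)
Your proposal is correct and follows essentially the same route as the paper, which presents this theorem as the finite-dimensional combination of Lemma~\ref{L V,v dual fusion frame entonces Vi=ApiWj} and Lemma~\ref{L Vi=ApiWj entonces V,v dual fusion frame}, with finite-dimensionality (and the paper's convention that finite-dimensional subspaces carry finite index sets) rendering the Bessel and boundedness hypotheses automatic. Your extra verification that a component preserving $Q$ with $L=T_{\mathbf{V},{\bf v}}Q$ must equal $Q_{L,{\bf v}}$ (via $LM_i=T_{\mathbf{V},{\bf v}}M_iQ=v_i(Q\cdot)_i$) is exactly the detail needed to close the forward implication, and it is sound.
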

\begin{rem}
\cite[Proposition 2.4]{Heineken-Morillas (2018)},
Theorem~\ref{T V,v dual fusion frame sii Vi=ApiWj} and \cite[Theorem
3.5]{Heineken-Morillas (2014)} tell us that if $\mathcal{W}$ and
$\mathcal{V}$ are finite-dimensional, there is a bijection between $\epsilon$-approximate component preserving dual and $\epsilon$-approximate  oblique component preserving dual fusion
frames.
\end{rem}
\begin{rem}
We can conclude from Theorem~\ref{T V,v dual fusion frame sii
Vi=ApiWj} that if $\mathcal{W}$ and $\mathcal{V}$ are finite-dimensional
we can always associate to any $\epsilon$-approximate oblique $Q$-dual fusion frame $({\bf
V},{\bf v})$ of $(\mathbf{W},{\bf w})$ the $\epsilon$-approximate oblique component preserving $Q_{L,{\bf
\tilde{v}}}$-dual fusion frame
$\{(LM_{i}\mathcal{K}_{\mathbf{W}},\tilde{v}_{i})\}_{i \in I}$ where
$L=T_{{\bf V},{\bf v}}Q$ and $\{\widetilde{v}_{i}\}_{i \in I}$ are
arbitrary weights. Moreover, if $Q$ is block diagonal, then
$Q_{T_{{\bf V},{\bf v}}Q,{\bf v}}(f_i)_{i \in I}=Q(f_i)_{i \in I}$
for each $(f_i)_{i \in I} \in \mathcal{K}_{\mathbf{W}}$.
\end{rem}

\begin{rem}\label{R conjunto inversas oblicuas de TWw*}
It can be seen that $L \in \mathfrak{L}_{T^{\ast}_{\mathbf{W},{\bf w}},\epsilon}^{\mathcal{V},\mathcal{W}^{\perp}}$ if and only if 

\centerline{$L=\pi_{\mathcal{V},\mathcal{W}^{\perp}}S_{\mathbf{W},{\bf
w}}^{\dagger}T_{\mathbf{W},{\bf
w}}+B(I_{\mathcal{K}_{\mathbf{W}}}-T^{\ast}_{\mathbf{W},{\bf
w}}S_{\mathbf{W},{\bf w}}^{\dagger}T_{\mathbf{W},{\bf w}})+R$}

\noindent for some $B, R\in B(\mathcal{K}_{\mathbf{W}},\mathcal{V})$ such that $||RT^{\ast}_{\mathbf{W},{\bf
w}}|| \leq
\epsilon$. Alternatively, $L \in \mathfrak{L}_{T^{\ast}_{\mathbf{W},{\bf w}},\epsilon}^{\mathcal{V},\mathcal{W}^{\perp}}$ if and only if

\centerline{$L=\pi_{\mathcal{V},\mathcal{W}^{\perp}}S_{\mathbf{W},{\bf
w}}^{\dagger}T_{\mathbf{W},{\bf w}}+B$}

\noindent for some $B\in
B(\mathcal{K}_{\mathbf{W}},\mathcal{V})$ such that $||BT^{\ast}_{\mathbf{W},{\bf w}}|| \leq \epsilon$.
\end{rem}
Now, we analyze approximate oblique component preserving
dual fusion frame systems in relation with a given fusion frame for a closed
subspace of $\mathcal{H}$, local approximate dual frames and an approximate oblique left
inverse of its analysis operator.
\begin{thm}\label{T L TW T fusion frame system dual 3}
Let $\mathcal{H}=\mathcal{V} \oplus
\mathcal{W}^{\perp}$ and $\epsilon \geq 0$. Assume
that $(\mathbf{W}, {\bf w})$ is a fusion frame for
$\mathcal{W}$, $L\in \mathfrak{L}_{T^{\ast}_{\mathbf{W},{\bf w}},\epsilon}^{\mathcal{V},\mathcal{W}^{\perp}}$ and ${\bf v}$ is a collection of weights such that $\inf_{i \in I}v_i>0$. For
each $i\in I$ let $\{f_{i,l}\}_{l\in L_i}$ and
$\{\tilde{f}_{i,l}\}_{l\in L_i}$ be $\epsilon_{i}$-approximate
dual frames for $W_i$, $\paren{w_{i}\epsilon_{i}}_{i \in I} \in
\ell^{2}\paren{I}$, $\beta_i$ upper frame bounds of
$\{f_{i,l}\}_{l\in L_i}$ such that $\sup_{i \in I}\beta_i<\infty$,
$\tilde{\alpha}_i$ and $\tilde{\beta}_i$ frame bounds of
$\{\tilde{f}_{i,l}\}_{l\in L_i}$ such that $\sup_{i \in
I}\widetilde{\beta}_i<\infty$, $\mathcal{G}_i=\{
\frac{1}{v_{i}}L(\chi_{i}(j)\tilde{f}_{i,l})_{j \in I}\}_{l\in L_i}$
and $V_{i}=\overline{{\rm span}}\,\mathcal{G}_i$. Then
\begin{enumerate}
 \item  $\mathcal{G}_i$ is a frame for $V_{i}$ with frame
bounds $\|L^{\dagger}\|^{-2}\frac{\widetilde{\alpha}_i}{v_{i}^{2}}$
and $\|L\|^{2}\frac{\widetilde{\beta}_i}{v_{i}^{2}}.$
 \item $(\mathbf{V},\mathbf{v},\mathcal{G})$ is an $\paren{\norm{L}\paren{\sum_{i \in
I}w_{i}^{2}\epsilon_{i}^{2}}^{1/2}+\epsilon}$-approximate oblique
component preserving dual fusion frame system of $(\mathbf{W}, {\bf
w},
 \mathcal{F})$ on $\mathcal{V}$.
  \item If $\epsilon_{i}=0$ for each $i \in I$, then
$Q_{L,\mathbf{v}}=C_{\mathcal{G}}C_{\mathcal{F}}^{\ast}$.
\end{enumerate}
\end{thm}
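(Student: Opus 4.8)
The plan is to treat the three parts in order, as each feeds into the next. For part (1), I would fix $i \in I$ and analyse the map $f_{i,l} \mapsto \frac1{v_i} L(\chi_i(j)\tilde f_{i,l})_{j\in I}$. Note that $(\chi_i(j)\tilde f_{i,l})_{j\in I} = M_i (\ldots)$ lives in $M_i\mathcal K_{\mathbf W}$, on which $L$ restricts to $L M_i$; since $\{\tilde f_{i,l}\}_{l\in L_i}$ is a frame for $W_i$ with bounds $\tilde\alpha_i,\tilde\beta_i$, the image family is the image under $\frac1{v_i}(L M_i)|_{W_i}$ of that frame. Using the standard fact that applying a bounded operator with closed range to a frame yields a frame for the image (e.g. \cite[Corollary 5.3.2]{Christensen (2016)}), together with $\|(L M_i)^\dagger\|^{-1} = \gamma(L M_i) \geq \|L^\dagger\|^{-1}$ (by \cite[Remark 2.4 (4)]{Ruiz-Stojanoff (2008)} applied to $L$ and $M_i$, since $\gamma(M_i)=1$) and $\|L M_i\| \leq \|L\|$, I get that $\mathcal G_i$ is a frame for $V_i = \overline{\mathrm{span}}\,\mathcal G_i$ with bounds $\|L^\dagger\|^{-2}\tilde\alpha_i/v_i^2$ and $\|L\|^{2}\tilde\beta_i/v_i^2$. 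The hypothesis $\inf_i v_i > 0$ and $\sup_i\tilde\beta_i < \infty$ make $\sup_i$ of the upper bounds finite, so $(\mathbf V,\mathbf v,\mathcal G)$ is a legitimate fusion Bessel system, and in fact a fusion frame system once part (2) gives it a lower bound (via Remark~\ref{Obs definicion de mdoa}(ii), since the approximation constant will be checked to be admissible only where needed).

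For part (2), the key is the identity $L = T_{\mathbf V,\mathbf v} Q_{L,\mathbf v}$ where $Q_{L,\mathbf v}$ is the block-diagonal, component-preserving operator of~(\ref{E QA}); this is exactly the structure exploited in Lemma~\ref{L Vi=ApiWj entonces V,v dual fusion frame}, and $Q_{L,\mathbf v}$ is bounded by Lemma~\ref{L Q acotada} (using $\inf_i v_i > 0$ and $\gamma(LM_i)>0$, which holds since $L M_i$ has closed nonzero range). The target operator for the fusion frame system is $C_{\mathcal G} C_{\mathcal F}^\ast$, which is block diagonal by Remark~\ref{R Q Mi sum Wj subset Mi sum Vj} and acts componentwise as $T_{\mathcal G_i} T_{\mathcal F_i}^\ast$. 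I would estimate
\[
\|T_{\mathbf V,\mathbf v}\,C_{\mathcal G}C_{\mathcal F}^\ast\,T^{\ast}_{\mathbf W,\mathbf w} - \pi_{\mathcal V,\mathcal W^\perp}\|
\leq \|T_{\mathbf V,\mathbf v}\,C_{\mathcal G}C_{\mathcal F}^\ast\,T^{\ast}_{\mathbf W,\mathbf w} - L\,T^{\ast}_{\mathbf W,\mathbf w}\| + \|L\,T^{\ast}_{\mathbf W,\mathbf w} - \pi_{\mathcal V,\mathcal W^\perp}\|,
\]
where the second term is $\leq\epsilon$ since $L\in\mathfrak L^{\mathcal V,\mathcal W^\perp}_{T^\ast_{\mathbf W,\mathbf w},\epsilon}$. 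For the first term I would compute $T_{\mathbf V,\mathbf v}\,C_{\mathcal G}C_{\mathcal F}^\ast$ componentwise: on the $i$-th component it sends $g \mapsto \frac1{v_i}L(\chi_i(j)\cdot)_j$ composed with $T_{\mathcal G_i}T_{\mathcal F_i}^\ast$, and after reassembling with the weights $v_i$ this becomes $L$ applied blockwise to $(T_{\mathcal G_i}T_{\mathcal F_i}^\ast \pi_{W_i}f)_i$; whereas $L T^\ast_{\mathbf W,\mathbf w} f = L(w_i \pi_{W_i}f)_i$, which after using $L = T_{\mathbf V,\mathbf v}Q_{L,\mathbf v}$ and block-diagonality amounts to $L$ applied blockwise to $(\pi_{W_i} f)_i$ up to the bookkeeping of weights. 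Carefully tracking the $w_i$'s, the difference is $L$ applied to the vector whose $i$-th block is $w_i(T_{\mathcal G_i}T_{\mathcal F_i}^\ast - \pi_{W_i})\pi_{W_i}f$, whence
\[
\|T_{\mathbf V,\mathbf v}C_{\mathcal G}C_{\mathcal F}^\ast T^{\ast}_{\mathbf W,\mathbf w}f - LT^{\ast}_{\mathbf W,\mathbf w}f\|^2 \leq \|L\|^2 \sum_{i\in I} w_i^2\|(T_{\mathcal G_i}T_{\mathcal F_i}^\ast - \pi_{W_i})\pi_{W_i}f\|^2 \leq \|L\|^2\Big(\sum_{i\in I}w_i^2\epsilon_i^2\Big)\|f\|^2,
\]
using that $\{f_{i,l}\},\{\tilde f_{i,l}\}$ are $\epsilon_i$-approximate dual frames for $W_i$, i.e. $\|T_{\mathcal G_i}T_{\mathcal F_i}^\ast - \pi_{W_i}\|\leq\epsilon_i$ on $W_i$. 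The hypothesis $(w_i\epsilon_i)_i\in\ell^2(I)$ makes this sum finite. Combining, the approximation constant is $\|L\|(\sum_i w_i^2\epsilon_i^2)^{1/2} + \epsilon$; component preservation follows because $V_i = L M_i\mathcal K_{\mathbf W}$ (one checks $\overline{\mathrm{span}}\,\mathcal G_i = LM_i\mathcal K_{\mathbf W}$ from part (1)'s computation plus $\tilde f_{i,l}$ spanning $W_i$) and $C_{\mathcal G}C_{\mathcal F}^\ast$ is the component-preserving witness, so Lemma~\ref{L Vi=ApiWj entonces V,v dual fusion frame} (with $C_{\mathcal G}C_{\mathcal F}^\ast$ in place of $Q_{L,\mathbf v}$) applies. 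Part (3) is then immediate: if every $\epsilon_i = 0$ then each $\{\tilde f_{i,l}\}$ is an exact dual of $\{f_{i,l}\}$, so $T_{\mathcal G_i}T_{\mathcal F_i}^\ast = \pi_{W_i}$; comparing the componentwise actions of $C_{\mathcal G}C_{\mathcal F}^\ast$ and of $Q_{L,\mathbf v}$ on $\mathcal K_{\mathbf W}$ — the former sends $(f_i)_i \mapsto (T_{\mathcal G_i}T_{\mathcal F_i}^\ast f_i)_i = (\pi_{W_i}f_i)_i = (f_i)_i$ and the latter sends $(f_i)_i\mapsto(\frac1{v_i}LM_i(f_j)_j)_i$, and one verifies these coincide on $\mathcal K_{\mathbf W}$ by unwinding $\mathcal G_i$ — gives $Q_{L,\mathbf v} = C_{\mathcal G}C_{\mathcal F}^\ast$.

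The main obstacle I anticipate is the bookkeeping in part (2): correctly tracking how the weights $v_i$ in $T_{\mathbf V,\mathbf v}$ cancel the $\frac1{v_i}$ factors built into $\mathcal G_i$, and how the weights $w_i$ from $T^\ast_{\mathbf W,\mathbf w}$ interact with the approximate-duality defect $T_{\mathcal G_i}T_{\mathcal F_i}^\ast - \pi_{W_i}$ on each subspace $W_i$, so that the final error genuinely collapses to $\|L\|^2\sum_i w_i^2\epsilon_i^2$ and not something with a spurious $v_i$-dependence. A secondary subtlety is verifying $V_i = LM_i\mathcal K_{\mathbf W}$ (needed for component preservation): this requires knowing that $\{\tilde f_{i,l}\}_{l\in L_i}$ spans $W_i$ (true, being a frame for $W_i$) so that $\{(\chi_i(j)\tilde f_{i,l})_j : l\}$ spans $M_i\mathcal K_{\mathbf W} \cap (\text{the }i\text{-th block})$, whose closure under $L$ is $LM_i\mathcal K_{\mathbf W}$ — one must be slightly careful that taking closed span commutes appropriately with applying the bounded operator $LM_i$ with closed range, which is where part (1)'s frame conclusion is actually used.
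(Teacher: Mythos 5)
Your proposal is correct and follows essentially the same route as the paper's proof: part (1) from the standard result on images of frames under bounded operators with closed range, part (2) by computing $T_{\mathbf{V},\mathbf{v}}C_{\mathcal{G}}C_{\mathcal{F}}^{\ast}T^{\ast}_{\mathbf{W},\mathbf{w}}f$ componentwise, comparing it with $LT^{\ast}_{\mathbf{W},\mathbf{w}}f=T_{\mathbf{V},\mathbf{v}}Q_{L,\mathbf{v}}T^{\ast}_{\mathbf{W},\mathbf{w}}f$, and concluding by the triangle inequality together with $L\in \mathfrak{L}_{T^{\ast}_{\mathbf{W},\mathbf{w}},\epsilon}^{\mathcal{V},\mathcal{W}^{\perp}}$, and part (3) by direct comparison of the two operators when $\epsilon_{i}=0$. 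The only blemish is notational: the blockwise defect inside $L$ should be $w_{i}\bigl(T_{\widetilde{\mathcal{F}}_{i}}T_{\mathcal{F}_{i}}^{\ast}-\pi_{W_{i}}\bigr)\pi_{W_{i}}f$ with $\widetilde{\mathcal{F}}_{i}=\{\tilde{f}_{i,l}\}_{l\in L_{i}}$, not $T_{\mathcal{G}_{i}}T_{\mathcal{F}_{i}}^{\ast}$, since $\mathcal{G}_{i}$ lies in $V_{i}$ rather than $W_{i}$.
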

\begin{proof}
(1) holds by \cite[Proposition 5.3.1]{Christensen (2016)}.

Using Theorem~\ref{T wF marco sii WwF fusion frame system}, as in the
proof of \cite[Proposition 6.8]{Heineken-Morillas (2018)}, we obtain
that $(\mathbf{V},\mathbf{v},\mathcal{G})$ is a fusion Bessel system
for $\mathcal{H}$ with upper frame bounds $\|L\|^{2}\sup_{i \in
I}\widetilde{\beta}_i$. We also have that $Q_{L,\mathbf{v}}$ is a
well defined bounded operator with
$||Q_{L,\mathbf{v}}||\leq\frac{||L||^{2}}{\inf_{i \in I}v_{i}^{2}}$.

Let $f \in \mathcal{H}$. We have,
\begin{align*}
T_{\mathbf{V},{\bf
v}}C_{\mathcal{G}}C_{\mathcal{F}}^{\ast}T^{\ast}_{\mathbf{W},{\bf
w}}f&=\sum_{i \in
I}w_{i}v_{i}T_{\mathcal{G}_{i}}T_{\mathcal{F}_{i}}^{\ast}\pi_{\mathcal{W}_{i}}f\\
&=\sum_{i \in I}w_{i}\sum_{l\in
L_i}\langle\pi_{\mathcal{W}_{i}}f,f_{i,l}\rangle
L(\chi_{i}(j)\tilde{f}_{i,l})_{j \in I}=L\paren{w_{i}\sum_{l\in
L_i}\langle\pi_{\mathcal{W}_{i}}f,f_{i,l}\rangle \tilde{f}_{i,l}}_{i
\in I}
\end{align*}
and $T_{\mathbf{V},{\bf v}}Q_{L,{\bf v}}T^{\ast}_{\mathbf{W},{\bf
w}}f=L\paren{w_{i}\pi_{\mathcal{W}_{i}}f}_{i \in I}$.
Then
\begin{align*}
\norm{T_{\mathbf{V},{\bf
v}}C_{\mathcal{G}}C_{\mathcal{F}}^{\ast}T^{\ast}_{\mathbf{W},{\bf
w}}f-T_{\mathbf{V},{\bf v}}Q_{L,{\bf v}}T^{\ast}_{\mathbf{W},{\bf w}}f}
&\leq\norm{L}\paren{\sum_{i \in I}w_{i}^{2}\norm{\sum_{l\in
L_i}\langle\pi_{\mathcal{W}_{i}}f,f_{i,l}\rangle
\tilde{f}_{i,l}-\pi_{\mathcal{W}_{i}}f}^{2}}^{1/2}\\
&\leq\norm{L}\!\paren{\sum_{i \in
I}w_{i}^{2}\epsilon_{i}^{2}\norm{\pi_{\mathcal{W}_{i}}f}^{2}}^{1/2}\!\!\leq
\norm{L}\paren{\sum_{i \in
I}w_{i}^{2}\epsilon_{i}^{2}}^{1/2}\!\!\!\!\norm{f}.
\end{align*}

By Lemma~\ref{L Vi=ApiWj entonces V,v dual fusion frame},
\begin{align*}
\norm{T_{\mathbf{V},{\bf
v}}C_{\mathcal{G}}C_{\mathcal{F}}^{\ast}T^{\ast}_{\mathbf{W},{\bf
w}}f-\pi_{\mathcal{V},\mathcal{W}^{\perp}}f}\leq&\norm{T_{\mathbf{V},{\bf
v}}C_{\mathcal{G}}C_{\mathcal{F}}^{\ast}T^{\ast}_{\mathbf{W},{\bf
w}}f-T_{\mathbf{V},{\bf v}}Q_{L,{\bf v}}T^{\ast}_{\mathbf{W},{\bf
w}}f}\\&+\norm{T_{\mathbf{V},{\bf v}}Q_{L,{\bf
v}}T^{\ast}_{\mathbf{W},{\bf
w}}f-\pi_{\mathcal{V},\mathcal{W}^{\perp}}f}\\
\leq& \norm{L}\paren{\sum_{i \in
I}w_{i}^{2}\epsilon_{i}^{2}}^{1/2}\norm{f}+\epsilon\norm{f}.
\end{align*}
This proves (2).

Finally, (3) can be proved as \cite[Proposition
6.8(3)]{Heineken-Morillas (2018)}.
\end{proof}
In the next theorem we consider approximate oblique dual
fusion frame systems in relation to a given frame for a subspace and an approximate oblique left inverse of its analysis operator.
\begin{thm}\label{T L TF T fusion frame system dual}
Let $\mathcal{H}=\mathcal{V} \oplus
\mathcal{W}^{\perp} $ and $\epsilon \geq 0$.  Let ${\bf w}$ and ${\bf v}$ be two
collections of weights such that $\inf_{i \in I}v_i>0$. Let ${\bf
w}\mathcal{F}$ be a frame for $\mathcal{W}$ with local upper frame
bounds $\beta_i$ such that $\sup_{i \in I}\beta_i<\infty$, $L \in
\mathfrak{L}_{T_{{\bf w}\mathcal{F}}^{\ast},\epsilon}^{\mathcal{V},\mathcal{W}^{\perp}}$ and
$\{\{e_{i,l}\}_{l\in L_i}\}_{i\in I}$ be the standard basis for
$\oplus_{i \in I}\ell^2(L_{i})$. For each $i \in I$, set
$W_{i}=\overline{\text{span}}\{f_{i,l}\}_{l\in L_i}$ and
$V_{i}=\overline{\text{span}}\{\frac{1}{v_{i}}Le_{i,l}\}_{l\in
L_i}$. Let $\mathcal{G}=\{\{\frac{1}{v_{i}}Le_{i,l}\}_{l\in
L_i}\}_{i \in I}.$ Then
\begin{enumerate}
 \item $\{\frac{1}{v_{i}}Le_{i,l}\}_{l\in L_i}$ is a frame for $V_{i}$ with frame
bounds $\frac{\|L^{\dagger}\|^{-2}}{v_{i}^{2}}$ and
$\frac{\|L\|^{2}}{v_{i}^{2}}$.
 \item $(\mathbf{V}, {\bf v}, \mathcal{G})$ is an $\epsilon$-approximate oblique dual fusion frame system of $(\mathbf{W}, {\bf w},
 \mathcal{F})$ on $\mathcal{V}$.
\end{enumerate}
\end{thm}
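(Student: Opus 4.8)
The plan is to reduce everything to the single operator identity $T_{\mathbf{V},{\bf v}}C_{\mathcal{G}}=L$. Granting this, statement (2) is essentially a restatement of the hypothesis $L\in\mathfrak{L}_{T^{\ast}_{{\bf w}\mathcal{F}},\epsilon}^{\mathcal{V},\mathcal{W}^{\perp}}$, and (1) is the computation already carried out in the proof of Lemma~\ref{L V Bessel}. For (1) I would write $M_{i}$ for the orthogonal projection of $\oplus_{j\in I}\ell^{2}(L_{j})$ onto its $i$-th summand, so that $\{e_{i,l}\}_{l\in L_{i}}$ is an orthonormal basis of $\mathrm{Im}(M_{i})$ and $\mathcal{G}_{i}=\{\frac{1}{v_{i}}LM_{i}e_{i,l}\}_{l\in L_{i}}$ is the image of that basis under $\frac{1}{v_{i}}L$. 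Running the first lines of the proof of Lemma~\ref{L V Bessel} verbatim --- by \cite[Corollary 5.3.2]{Christensen (2016)}, the equality $\gamma(LM_{i})=\|(LM_{i})^{\dagger}\|^{-1}$, and $\|LM_{i}\|\le\|L\|$ --- shows that $V_{i}=\mathrm{Im}(LM_{i})$ is closed and that $\mathcal{G}_{i}$ is a frame for $V_{i}$ with the bounds displayed in (1). In particular $\sup_{i}\widetilde{\beta}_{i}=\|L\|^{2}/\inf_{i}v_{i}^{2}<\infty$ since $\inf_{i}v_{i}>0$, and by Lemma~\ref{L V Bessel} (or Theorem~\ref{T wF marco sii WwF fusion frame system}) $(\mathbf{V},{\bf v})$ is a fusion Bessel sequence for $\mathcal{V}$, so that $C_{\mathcal{G}}$, $C_{\mathcal{F}}^{\ast}$ and $T_{\mathbf{V},{\bf v}}$ are all well-defined bounded operators.

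For (2) I would first record the operator identities. From $T_{{\bf w}\mathcal{F}}=T_{\mathbf{W},{\bf w}}C_{\mathcal{F}}$ (noted just before Definition~\ref{D oblique dual fusion frame system}) one gets $T^{\ast}_{{\bf w}\mathcal{F}}=C_{\mathcal{F}}^{\ast}T^{\ast}_{\mathbf{W},{\bf w}}$, and unwinding the definitions of $T_{\mathbf{V},{\bf v}}$, $C_{\mathcal{G}}$ and $\mathcal{G}_{i}$ gives, for every $(x_{i})_{i\in I}\in\oplus_{i\in I}\ell^{2}(L_{i})$,
\[
T_{\mathbf{V},{\bf v}}C_{\mathcal{G}}(x_{i})_{i\in I}=\sum_{i\in I}v_{i}\sum_{l\in L_{i}}(x_{i})_{l}\,\tfrac{1}{v_{i}}Le_{i,l}=L\,(x_{i})_{i\in I},
\]
that is, $T_{\mathbf{V},{\bf v}}C_{\mathcal{G}}=L$. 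Composing with $C_{\mathcal{F}}^{\ast}T^{\ast}_{\mathbf{W},{\bf w}}=T^{\ast}_{{\bf w}\mathcal{F}}$ yields $T_{\mathbf{V},{\bf v}}C_{\mathcal{G}}C_{\mathcal{F}}^{\ast}T^{\ast}_{\mathbf{W},{\bf w}}=LT^{\ast}_{{\bf w}\mathcal{F}}$, hence $\|T_{\mathbf{V},{\bf v}}C_{\mathcal{G}}C_{\mathcal{F}}^{\ast}T^{\ast}_{\mathbf{W},{\bf w}}-\pi_{\mathcal{V},\mathcal{W}^{\perp}}\|=\|LT^{\ast}_{{\bf w}\mathcal{F}}-\pi_{\mathcal{V},\mathcal{W}^{\perp}}\|\le\epsilon$ by the very definition of $\mathfrak{L}_{T^{\ast}_{{\bf w}\mathcal{F}},\epsilon}^{\mathcal{V},\mathcal{W}^{\perp}}$.

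It then remains to check the fusion-system bookkeeping so that the above inequality means exactly what (2) asserts. The triple $(\mathbf{W},{\bf w},\mathcal{F})$ is a fusion frame system for $\mathcal{W}$ with $\sup_{i}\beta_{i}<\infty$ by hypothesis --- equivalently, by Theorem~\ref{T wF marco sii WwF fusion frame system}, $(\mathbf{W},{\bf w})$ is a fusion frame for $\mathcal{W}$, which also follows from $\mathrm{Im}(T_{\mathbf{W},{\bf w}})\supseteq\mathrm{Im}(T_{\mathbf{W},{\bf w}}C_{\mathcal{F}})=\mathrm{Im}(T_{{\bf w}\mathcal{F}})=\mathcal{W}$. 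By (1), $(\mathbf{V},{\bf v},\mathcal{G})$ is a fusion Bessel system for $\mathcal{V}$ with $\sup_{i}\widetilde{\beta}_{i}<\infty$, and it is an honest fusion frame system because $\mathrm{Im}(T_{\mathbf{V},{\bf v}})\supseteq\mathrm{Im}(T_{\mathbf{V},{\bf v}}C_{\mathcal{G}})=\mathrm{Im}(L)=\mathcal{V}$, while $\mathrm{Im}(T_{\mathbf{V},{\bf v}})\subseteq\mathcal{V}$, so $(\mathbf{V},{\bf v})$ is a fusion frame for $\mathcal{V}$; moreover $\mathcal{F}_{i}$ and $\mathcal{G}_{i}$ share the index set $L_{i}$ and $C_{\mathcal{G}}C_{\mathcal{F}}^{\ast}\in B(\mathcal{K}_{\mathbf{W}},\mathcal{K}_{\mathbf{V}})$. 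Thus the displayed inequality is precisely the statement (Definition~\ref{D oblique fusion frame dual}) that $(\mathbf{V},{\bf v})$ is an $\epsilon$-approximate oblique $C_{\mathcal{G}}C_{\mathcal{F}}^{\ast}$-dual fusion frame of $(\mathbf{W},{\bf w})$ on $\mathcal{V}$, i.e.\ (Definition~\ref{D oblique dual fusion frame system}) $(\mathbf{V},{\bf v},\mathcal{G})$ is an $\epsilon$-approximate oblique dual fusion frame system of $(\mathbf{W},{\bf w},\mathcal{F})$ on $\mathcal{V}$. I expect the genuinely delicate step to be (1) together with this bookkeeping: one has to be sure that each restriction $LM_{i}$ has closed range --- so that $V_{i}$ is a bona fide closed subspace and $\mathcal{G}_{i}$ a bona fide frame rather than just a complete Bessel sequence --- and that the local frame bounds are uniform, which is exactly where $\inf_{i}v_{i}>0$ is used; after that the reconstruction estimate in (2) drops out of the identity $T_{\mathbf{V},{\bf v}}C_{\mathcal{G}}=L$ with no further work.
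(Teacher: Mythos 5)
Your argument is correct where it matters and follows essentially the same route as the paper: the paper's proof consists of pointing to \cite[Lemma 4.5]{Diaz-Heineken-Morillas (2023)} --- which is precisely your identity $T_{\mathbf{V},{\bf v}}C_{\mathcal{G}}=L$, read as the statement that ${\bf v}\mathcal{G}=\{Le_{i,l}\}_{i,l}$ is an $\epsilon$-approximate oblique dual frame of ${\bf w}\mathcal{F}$ --- followed by Theorem~\ref{T dual fusion frame systems}, whose content is exactly the identity $T_{{\bf v}\mathcal{G}}T^{\ast}_{{\bf w}\mathcal{F}}=T_{\mathbf{V},{\bf v}}C_{\mathcal{G}}C_{\mathcal{F}}^{\ast}T^{\ast}_{\mathbf{W},{\bf w}}$ that you verify by hand. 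You have merely inlined the two citations, which is a perfectly good (indeed more self-contained) way to present it.

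The one genuine weak point is part (1). The machinery you invoke (\cite[Corollary 5.3.2]{Christensen (2016)} applied to the orthonormal basis $\{e_{i,l}\}_{l\in L_i}$ of the $i$-th block and the operator $\frac{1}{v_i}L$) delivers the lower frame bound $\|(LM_i)^{\dagger}\|^{-2}/v_i^{2}=\gamma(LM_i)^{2}/v_i^{2}$, not the stated $\|L^{\dagger}\|^{-2}/v_i^{2}$, and these are not interchangeable: $\gamma(LM_i)$ can be strictly smaller than $\gamma(L)=\|L^{\dagger}\|^{-1}$. For instance, with $\mathcal{H}=\mathcal{V}=\mathcal{W}=\mathbb{C}$, $I=\{1,2\}$, $w_i=v_i=1$, $\mathcal{F}_1=\mathcal{F}_2=\{1\}$ and $L(a,b)=(a+b)/2$, one has $LT^{\ast}_{{\bf w}\mathcal{F}}=I$ and $\|L^{\dagger}\|^{-2}=1/2$, yet $\mathcal{G}_1=\{1/2\}$ has optimal lower frame bound $1/4$; so the bound displayed in (1) is not in general a valid lower bound, and your claim that the computation gives ``the bounds displayed in (1)'' is a non sequitur for the lower one. (This imprecision is present in the paper's own statement and proof, so you have reproduced rather than created it.) In addition, when some $L_i$ is infinite you must justify that $LM_i$ has closed range, i.e.\ $\gamma(LM_i)>0$ --- closedness of $\mathrm{Im}(L)$ does not imply closedness of $L(\ell^{2}(L_i))$ --- before $\mathcal{G}_i$ is a frame, rather than merely a complete Bessel sequence, for $V_i$; this is automatic only for finite $L_i$. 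None of this affects the estimate in your proof of (2), which uses only $T_{\mathbf{V},{\bf v}}C_{\mathcal{G}}C_{\mathcal{F}}^{\ast}T^{\ast}_{\mathbf{W},{\bf w}}=LT^{\ast}_{{\bf w}\mathcal{F}}$ and the uniform upper bounds $\widetilde{\beta}_i\leq\|L\|^{2}/\inf_{j}v_j^{2}$, although Definition~\ref{D oblique dual fusion frame system} does formally require each $\mathcal{G}_i$ to be a frame for $V_i$.
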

\begin{proof}
The proof is similar to the one of \cite[Proposition 6.9]{Heineken-Morillas (2018)}. Here, we use \cite[Proposition 5.3.1]{Christensen
(2016)}, \cite[Lemma 4.5]{Diaz-Heineken-Morillas (2023)}, and Theorem~\ref{T dual
fusion frame systems}
\end{proof}

%%%%%%%%%%%%%%%%%%%%%%%%%%%%%%%%%%%%%%%%
\section{Better approximation}

Assume that $\mathcal{H}=\mathcal{V}\oplus\mathcal{W}^{\perp}$. The next theorem shows that if $(\mathbf{W},{\bf w})$ and $ (\mathbf{V},{\bf v})$ are $\e$-approximate oblique $Q$-dual fusion frames with $0 \leq \e < 1$, then we can construct
from $ (\mathbf{V},{\bf v})$ other approximate oblique dual fusion frames
in $ \Vsb $ of $(\mathbf{W},{\bf w})$ that reconstruct as ``good" as
we want. This is similar to \cite[Proposition
5.1]{Diaz-Heineken-Morillas (2023)} for approximate oblique dual
frames (see also \cite[Proposition 3.2]{Christensen-Laugesen (2010)}
for approximate dual frames). 

We define $A=\sum_{ n=0}^{ \infty } ( \pi_{ \Vsb \Wsb^{ \bot } } -
T_{\mathbf{V},{\bf v}} Q T_{\mathbf{W},{\bf w}}^{ \ast } )^n$. By Neumann's Theorem, $I
- ( \pi_{ \Vsb \Wsb^{ \bot } } - T_{\mathbf{V},{\bf v}} Q
T_{\mathbf{W},{\bf w}}^{ \ast } ) $ is invertible and $A=( I - ( \pi_{ \Vsb \Wsb^{ \bot } } -
T_{\mathbf{V},{\bf v}} Q T_{\mathbf{W},{\bf w}}^{ \ast } ) )^{ -1 }$. We set $A_{N}=\sum_{ n=0}^{ N } ( \pi_{ \Vsb \Wsb^{ \bot } } -
T_{\mathbf{V},{\bf v}} Q T_{\mathbf{W},{\bf w}}^{ \ast } )^n$.
\begin{thm}\label{T better approximation}
Let $\mathcal{H}=\mathcal{V}\oplus\mathcal{W}^{\perp}$. Let $(\mathbf{W},{\bf w})$ be a fusion frame for $ \Wsb $ and
$ (\mathbf{V},{\bf v})$ an $\e$-approximate oblique $Q$-dual fusion frame in
$\Vsb $ with $0 \leq \e < 1$. Let $\widetilde{{\bf w}}=\{\widetilde{w}_{k}\}_{k \in I}$ and $\widetilde{{\bf v}}=\{\widetilde{v}_{k}\}_{k \in I}$ be families of weights. If $(A_{N}\mathbf{V},\widetilde{{\bf v}})$ is
a fusion Bessel sequence for $\mathcal{V}$ and
$Q_{A_{N}T_{\mathbf{V},{\bf v}},\widetilde{\mathbf{v}}}$
is a well defined bounded operator, then
$(A_{N}\mathbf{V},{\bf v})$ is an
$\e^{N+1}$-approximate oblique $Q_{A_{N}T_{\mathbf{V},{\bf v}},\widetilde{\mathbf{v}}}Q$-dual fusion frame of
$(\mathbf{W},{\bf w})$ in $ \Vsb $ and $\norm{ \pi_{ \Vsb \Wsb^{ \bot } } -
T_{A_{N}\mathbf{V},\widetilde{{\bf v}}}
Q_{A_{N}T_{\mathbf{V},{\bf v}},\widetilde{\mathbf{v}}}Q
T_{\mathbf{W},{\bf w}}^{ \ast } } \leq \e^{N+1} \longrightarrow 0$ as
$N\rightarrow \infty$.
\end{thm}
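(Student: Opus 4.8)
The plan is to reduce the statement to a Neumann--series telescoping. Throughout put $P=\pi_{\Vsb\Wsb^{\perp}}$, $B=T_{\mathbf{V},{\bf v}}QT_{\mathbf{W},{\bf w}}^{\ast}$ and $X=P-B$; the standing assumption $\norm{X}\le\e<1$ lets Neumann's Theorem guarantee that $A$ and each $A_N$ are well defined bounded operators, with $\norm{X^{N+1}}\le\norm{X}^{N+1}\le\e^{N+1}$.

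First I would record the bookkeeping identity
\[
T_{A_N\mathbf{V},\widetilde{\mathbf{v}}}\,Q_{A_NT_{\mathbf{V},{\bf v}},\widetilde{\mathbf{v}}}=A_N T_{\mathbf{V},{\bf v}} .
\]
Since $V_i=T_{\mathbf{V},{\bf v}}M_i\mathcal{K}_{\mathbf{V}}$, one has $A_NV_i=(A_NT_{\mathbf{V},{\bf v}})M_i\mathcal{K}_{\mathbf{V}}$, so the pair $(A_N\mathbf{V},\widetilde{\mathbf{v}})$ together with $Q_{A_NT_{\mathbf{V},{\bf v}},\widetilde{\mathbf{v}}}$ is precisely the situation of Lemma~\ref{L Vi=ApiWj entonces V,v dual fusion frame} with $L=A_NT_{\mathbf{V},{\bf v}}$ (and $\mathcal{K}_{\mathbf{W}}$ replaced by $\mathcal{K}_{\mathbf{V}}$); the displayed equality then follows exactly as there, in one line from $\sum_{i\in I}M_i=I$ and the boundedness of $L$. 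Consequently $T_{A_N\mathbf{V},\widetilde{\mathbf{v}}}\,Q_{A_NT_{\mathbf{V},{\bf v}},\widetilde{\mathbf{v}}}\,Q\,T_{\mathbf{W},{\bf w}}^{\ast}=A_NB$.

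The crux is then to show $P-A_NB=X^{N+1}$. The key observation is $T_{\mathbf{W},{\bf w}}^{\ast}P=T_{\mathbf{W},{\bf w}}^{\ast}$: since ${\rm Im}(I-P)=\Wsb^{\perp}$ and $\pi_{W_i}$ vanishes on $\Wsb^{\perp}$, we get $\pi_{W_i}P=\pi_{W_i}$ for all $i$. Hence $BP=B$, and together with $P^2=P$ this yields $XP=P^2-BP=P-B=X$, so $X^nP=X^n$ for every $n\ge1$. Writing $B=P-X$ and summing $X^nB=X^nP-X^{n+1}$ over $n=0,\dots,N$,
\[
A_NB=\left(P+\sum_{n=1}^{N}X^n\right)-\left(\sum_{n=1}^{N}X^n+X^{N+1}\right)=P-X^{N+1},
\]
so $P-A_NB=X^{N+1}$ and $\norm{P-A_NB}\le\norm{X}^{N+1}\le\e^{N+1}$.

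Combining the two steps gives $\norm{\pi_{\Vsb\Wsb^{\perp}}-T_{A_N\mathbf{V},\widetilde{\mathbf{v}}}Q_{A_NT_{\mathbf{V},{\bf v}},\widetilde{\mathbf{v}}}QT_{\mathbf{W},{\bf w}}^{\ast}}\le\e^{N+1}$, which tends to $0$ since $\e<1$. Finally, because $\e^{N+1}<1$, $(\mathbf{W},{\bf w})$ is a fusion frame and $(A_N\mathbf{V},\widetilde{\mathbf{v}})$ is assumed to be a fusion Bessel sequence, Remark~\ref{Obs definicion de mdoa}(ii) promotes $(A_N\mathbf{V},\widetilde{\mathbf{v}})$ to a fusion frame for $\Vsb$, and then Definition~\ref{D oblique fusion frame dual}, applied with the operator $Q_{A_NT_{\mathbf{V},{\bf v}},\widetilde{\mathbf{v}}}Q$, shows it is an $\e^{N+1}$-approximate oblique $Q_{A_NT_{\mathbf{V},{\bf v}},\widetilde{\mathbf{v}}}Q$-dual fusion frame of $(\mathbf{W},{\bf w})$ in $\Vsb$. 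I expect the only real subtlety to be bookkeeping --- making sure $Q_{A_NT_{\mathbf{V},{\bf v}},\widetilde{\mathbf{v}}}$ and $T_{A_N\mathbf{V},\widetilde{\mathbf{v}}}$ are legitimate bounded operators with matching domains and index sets, which is exactly what the two standing hypotheses provide (cf.\ also Lemma~\ref{L V Bessel}, Lemma~\ref{L Q acotada} and Proposition~\ref{P A invertible AV Bessel} for sufficient conditions on $\widetilde{\mathbf{v}}$) --- since the operator algebra collapses cleanly once $XP=X$ is in hand.
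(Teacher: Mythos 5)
Your proof is correct and follows essentially the same route as the paper: the identity $T_{A_N\mathbf{V},\widetilde{\mathbf{v}}}Q_{A_NT_{\mathbf{V},{\bf v}},\widetilde{\mathbf{v}}}Q=A_NT_{\mathbf{V},{\bf v}}Q$, the telescoping of the Neumann partial sum, and the fact that $\pi_{\Vsb\Wsb^{\perp}}-T_{\mathbf{V},{\bf v}}QT_{\mathbf{W},{\bf w}}^{\ast}$ annihilates $\Wsb^{\perp}$ (your $XP=X$ is exactly the paper's use of $T_{\mathbf{W},{\bf w}}^{\ast}\pi_{\Vsb\Wsb^{\perp}}=T_{\mathbf{W},{\bf w}}^{\ast}$). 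The only difference is presentational: you work with operators directly and make the bookkeeping identity explicit, while the paper argues pointwise on $f\in\Hil$.
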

\begin{proof}
Let $ f\in\Hil $. Then
\begin{align*}
T_{A_{N}\mathbf{V},\widetilde{\bf v}}
Q_{A_{N}T_{\mathbf{V},{\bf v}},\widetilde{\mathbf{v}}}Q
T_{\mathbf{W},{\bf w}}^{ \ast } f &= \sum_{ n=0 }^{ N } ( \pi_{
\Vsb \Wsb^{ \bot } } - T_{\mathbf{V},{\bf v}} Q T_{\mathbf{W},{\bf
w}}^{ \ast } )^n T_{\mathbf{V},{\bf v}} Q T_{\mathbf{W},{\bf w}}^{
\ast } f\\
&= \sum_{ n=0 }^{ N } ( \pi_{ \Vsb \Wsb^{ \bot } } -
T_{\mathbf{V},{\bf v}} Q T_{\mathbf{W},{\bf w}}^{ \ast } )^n ( \pi_{
\Vsb \Wsb^{ \bot } } - ( \pi_{ \Vsb \Wsb^{ \bot } } -
T_{\mathbf{V},{\bf v}} Q T_{\mathbf{W},{\bf w}}^{ \ast } ) ) \pi_{ \Vsb \Wsb^{ \bot } } f\\
&= \pi_{ \Vsb \Wsb^{ \bot } } f - ( \pi_{ \Vsb \Wsb^{ \bot } } -
T_{\mathbf{V},{\bf v}} Q T_{\mathbf{W},{\bf w}}^{ \ast } )^{ N+1 }
\pi_{ \Vsb \Wsb^{ \bot } } f.
\end{align*}
Since $ \Wsb^{ \bot } \subseteq \mathcal{N}( \pi_{ \Vsb \Wsb^{ \bot
} } - T_{\mathbf{V},{\bf v}} Q T_{\mathbf{W},{\bf w}}^{ \ast } ) $,
this implies that 

\centerline{$\pi_{ \Vsb \Wsb^{ \bot } }f -
T_{A_{N}\mathbf{V},\widetilde{\bf v}}
Q_{A_{N}T_{\mathbf{V},{\bf v}},\widetilde{\mathbf{v}}}Q
T_{\mathbf{W},{\bf w}}^{ \ast }f = ( \pi_{ \Vsb \Wsb^{ \bot } }
-T_{\mathbf{V},{\bf v}} Q T_{\mathbf{W},{\bf w}}^{ \ast } )^{ N+1
}f$.} 

\noindent So

\centerline{$\norm{\pi_{ \Vsb \Wsb^{ \bot } } -
T_{A_{N}\mathbf{V},\widetilde{\bf v}}
Q_{A_{N}T_{\mathbf{V},{\bf v}},\widetilde{\mathbf{v}}}Q
T_{\mathbf{W},{\bf w}}^{ \ast }} \leq \norm{ ( \pi_{ \Vsb \Wsb^{
\bot } } -T_{\mathbf{V},{\bf v}} Q T_{\mathbf{W},{\bf w}}^{ \ast }
)^{ N+1 }} \leq\e^{N+1} \rightarrow 0$ as $N\rightarrow \infty$.}
\end{proof}
\begin{rem}
Observe that from the preceding proof we have

\centerline{$\pi_{ \Vsb \Wsb^{ \bot } } -
T_{A_{N}\mathbf{V},\widetilde{\bf v}}
Q_{A_{N}T_{\mathbf{V},{\bf v}},\widetilde{\mathbf{v}}}Q
T_{\mathbf{W},{\bf w}}^{ \ast } = ( \pi_{ \Vsb \Wsb^{ \bot } } -
T_{\mathbf{V},{\bf v}} Q T_{\mathbf{W},{\bf w}}^{ \ast } )^{ N+1
}$.}

\noindent Note that the operator $T_{A_{N}\mathbf{V},\widetilde{\bf
v}}Q_{A_{N}T_{\mathbf{V},{\bf v}},\widetilde{\mathbf{v}}}Q$
can be expressed as $T_{A_{N}\mathbf{V},\widetilde{\bf
v}}Q_{A_{N}T_{\mathbf{V},{\bf v}},\widetilde{\mathbf{v}}}Q
= A_{N} T_{\mathbf{V},{\bf v}} Q$. If $(A_{N}^{\ast}\mathbf{W},\widetilde{{\bf w}})$ is
a fusion Bessel sequence for $\mathcal{W}$ and
$Q_{A_{N}^{\ast}T_{\mathbf{W},{\bf w}},\widetilde{\mathbf{w}}}$
is a well defined bounded operator then
$T_{A_{N}^{\ast}\mathbf{W},\widetilde{\bf
w}}Q_{A_{N}^{\ast}T_{\mathbf{W},{\bf w}},\widetilde{\mathbf{w}}}Q^{\ast}
= A_{N}^{\ast} T_{\mathbf{W},{\bf w}} Q^{\ast}$,

\centerline{$\pi_{ \Wsb \Vsb^{ \bot } } -
T_{A_{N}^{\ast}\mathbf{W},\widetilde{\bf w}}
Q_{A_{N}^{\ast}T_{\mathbf{W},{\bf w}},\widetilde{\mathbf{w}}}Q^{\ast}
T_{\mathbf{V},{\bf v}}^{ \ast } = ( \pi_{ \Wsb \Vsb^{ \bot } } -
T_{\mathbf{W},{\bf w}} Q^{\ast} T_{\mathbf{V},{\bf v}}^{ \ast } )^{ N+1
}$}

\noindent and

\centerline{$\norm{ \pi_{ \Wsb \Vsb^{ \bot } } -
T_{A_{N}^{\ast}\mathbf{W},\widetilde{\bf w}}
Q_{A_{N}^{\ast}T_{\mathbf{W},{\bf w}},\widetilde{\mathbf{w}}}Q^{\ast}
T_{\mathbf{V},{\bf v}}^{ \ast } } \leq \norm{ \pi_{ \Wsb \Vsb^{ \bot
} } - T_{\mathbf{W},{\bf w}} Q^{\ast} T_{\mathbf{V},{\bf v}}^{ \ast }
}^{ N+1 } \leq\e^{N+1} \rightarrow 0$.}

From the previous results, we observe that we can improve
the approximation given by the approximate oblique dual fusion
frames $(\mathbf{W},{\bf w})$ and $(\mathbf{V},{\bf v})$, modifying
either of them.
\end{rem}
In the next theorem we show that the fusion frames
$(A_{N}^{\ast}\mathbf{W},\widetilde{{\bf w}})$ and
$(A_{N}\mathbf{V},\widetilde{{\bf v}})$ defined
before are linked to oblique dual fusion frames of $(\mathbf{W},{\bf
w})$ and $(\mathbf{V},{\bf v})$.

\begin{thm}
Let $\mathcal{H}=\mathcal{V}\oplus\mathcal{W}^{\perp}$. Let $(\mathbf{W},{\bf w})$ be a fusion frame for $\Wsb$ and $
(\mathbf{V},{\bf v})$ be a fusion frame for $\Vsb $. Assume that
$(\mathbf{W},{\bf w})$ and $ (\mathbf{V},{\bf v})$ are
$\e$-approximate oblique $Q$-dual fusion frames with $0 \leq \e <
1$. Let $\widetilde{{\bf v}}=\{\widetilde{v}_{k}\}_{k \in I}$ and
$\widetilde{{\bf w}}=\{\widetilde{w}_{k}\}_{k \in I}$ be two
families of weights. Assume that
$(A\mathbf{V},\widetilde{{\bf v}})$ and
$(A^{\ast}\mathbf{W},\widetilde{{\bf w}})$ are
fusion Bessel sequences for $\mathcal{V}$ and $\mathcal{W}$, respectively. Assume also that
$Q_{AT_{\mathbf{V},{\bf v}},\widetilde{\mathbf{v}}}$ and
$Q_{A^{\ast}T_{\mathbf{W},{\bf w}},\widetilde{\mathbf{w}}}$ are
well defined bounded operators. Then:
\begin{enumerate}
  \item[(i)] $(A\mathbf{V},\widetilde{{\bf v}})$ is an oblique
$Q_{AT_{\mathbf{V},{\bf v}},\widetilde{\mathbf{v}}}Q$-dual
fusion frame of $(\mathbf{W},{\bf w})$ in $\Vsb$.
  \item[(ii)] $(A^{\ast}\mathbf{W},\widetilde{{\bf w}})$ is an oblique
$Q_{A^{\ast}T_{\mathbf{W},{\bf w}},\widetilde{\mathbf{w}}}Q^{\ast}$-dual
fusion frame of $(\mathbf{V},{\bf v})$ in $\Wsb$.
  \item[(iii)] If $Q$ is surjective (injective) and $\widetilde{Q}: \mathcal{K}_{\mathbf{V}} \rightarrow
\mathcal{K}_{\mathbf{W}}$ is a bounded right or left inverse of $Q$, then $(A\mathbf{V},\widetilde{{\bf v}})$ is an
$\frac{\e}{1-\e}$-approximate oblique
$Q_{AT_{\mathbf{V},{\bf v}},\widetilde{\mathbf{v}}}QQ_{A^{\ast}T_{\mathbf{W},{\bf w}},\widetilde{\mathbf{w}}}^{\ast}$-dual frame of $(A^{\ast}\mathbf{W},\widetilde{{\bf
w}})$ in $\Vsb$.
\end{enumerate}
\end{thm}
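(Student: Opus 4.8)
The plan is to reduce the whole statement to two elementary algebraic identities together with one \emph{exact} (not merely approximate) reconstruction identity. Write $B=\pi_{\Vsb\Wsb^{\perp}}-T_{\mathbf{V},{\bf v}}QT^{\ast}_{\mathbf{W},{\bf w}}$, so that $\|B\|\leq\e<1$ and, by Neumann's Theorem, $A=\sum_{n=0}^{\infty}B^{n}=(I-B)^{-1}$. First I would record the two identities
\[
T_{A\mathbf{V},\widetilde{{\bf v}}}\,Q_{AT_{\mathbf{V},{\bf v}},\widetilde{\mathbf{v}}}=A\,T_{\mathbf{V},{\bf v}},\qquad\qquad T_{A^{\ast}\mathbf{W},\widetilde{{\bf w}}}\,Q_{A^{\ast}T_{\mathbf{W},{\bf w}},\widetilde{\mathbf{w}}}=A^{\ast}\,T_{\mathbf{W},{\bf w}},
\]
both checked directly from the defining formula~(\ref{E QA}) of $Q_{L,{\bf v}}$: for $(f_{j})_{j}\in\mathcal{K}_{\mathbf{V}}$ one gets $Q_{AT_{\mathbf{V},{\bf v}},\widetilde{\mathbf{v}}}(f_{j})_{j}=(\frac{v_{i}}{\widetilde{v}_{i}}Af_{i})_{i}$ with $\frac{v_{i}}{\widetilde{v}_{i}}Af_{i}\in AV_{i}$, and then $T_{A\mathbf{V},\widetilde{{\bf v}}}$ collapses the weights to $\sum_{i}v_{i}Af_{i}=A\,T_{\mathbf{V},{\bf v}}(f_{j})_{j}$; the second identity is the same computation, and taking adjoints there yields $Q_{A^{\ast}T_{\mathbf{W},{\bf w}},\widetilde{\mathbf{w}}}^{\ast}\,T_{A^{\ast}\mathbf{W},\widetilde{{\bf w}}}^{\ast}=T^{\ast}_{\mathbf{W},{\bf w}}\,A$. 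These are the full Neumann-series analogues of the factorisation recorded in the Remark following Theorem~\ref{T better approximation}.

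For (i) the crux is the exact identity $A\,T_{\mathbf{V},{\bf v}}QT^{\ast}_{\mathbf{W},{\bf w}}=\pi_{\Vsb\Wsb^{\perp}}$. I would prove it from two structural facts: ${\rm Im}(B)\subseteq\Vsb$ (both summands of $B$ map into $\Vsb$), and $\Wsb^{\perp}\subseteq{\rm Ker}(B)$ (both $\pi_{\Vsb\Wsb^{\perp}}$ and $T^{\ast}_{\mathbf{W},{\bf w}}$ vanish on $\Wsb^{\perp}$, the latter since each $W_{i}\subseteq\Wsb$). Given $f\in\Hil$, put $g=\pi_{\Vsb\Wsb^{\perp}}f\in\Vsb$; since $f-g\in\Wsb^{\perp}$ we have $Bf=Bg$, hence $T_{\mathbf{V},{\bf v}}QT^{\ast}_{\mathbf{W},{\bf w}}f=\pi_{\Vsb\Wsb^{\perp}}f-Bf=(I-B)g$, and applying $A=(I-B)^{-1}$ gives $A\,T_{\mathbf{V},{\bf v}}QT^{\ast}_{\mathbf{W},{\bf w}}f=g=\pi_{\Vsb\Wsb^{\perp}}f$. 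Combined with the first of the two identities above this reads $T_{A\mathbf{V},\widetilde{{\bf v}}}(Q_{AT_{\mathbf{V},{\bf v}},\widetilde{\mathbf{v}}}Q)T^{\ast}_{\mathbf{W},{\bf w}}=\pi_{\Vsb\Wsb^{\perp}}$. Also ${\rm Im}(A-I)\subseteq\Vsb$ (each $B^{n}$ with $n\geq1$ has range in $\Vsb$, which is closed), so $A\Vsb\subseteq\Vsb$ and ${\rm Im}(T_{A\mathbf{V},\widetilde{{\bf v}}})\subseteq\Vsb$, while the displayed identity forces it to contain ${\rm Im}(\pi_{\Vsb\Wsb^{\perp}})=\Vsb$; thus the hypothesised fusion Bessel sequence $(A\mathbf{V},\widetilde{{\bf v}})$ is a fusion frame for $\Vsb$, and Definition~\ref{D oblique fusion frame dual} yields (i).

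For (ii) I would argue by symmetry: by the remark following Definition~\ref{D oblique fusion frame dual}, $(\mathbf{W},{\bf w})$ is an $\e$-approximate oblique $Q^{\ast}$-dual fusion frame of $(\mathbf{V},{\bf v})$ on $\Wsb$, and $\Hil=\Wsb\oplus\Vsb^{\perp}$ by \cite[Lemma 2.1]{Christensen-Eldar (2004)}; since $\pi_{\Wsb\Vsb^{\perp}}-T_{\mathbf{W},{\bf w}}Q^{\ast}T^{\ast}_{\mathbf{V},{\bf v}}=B^{\ast}$, its Neumann series is $(I-B^{\ast})^{-1}=A^{\ast}$, so (ii) is exactly part (i) after the substitutions $(\mathbf{W},{\bf w})\leftrightarrow(\mathbf{V},{\bf v})$, $Q\leftrightarrow Q^{\ast}$, $A\leftrightarrow A^{\ast}$, $\widetilde{{\bf v}}\leftrightarrow\widetilde{{\bf w}}$. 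For (iii), combining the identities above gives
\[
T_{A\mathbf{V},\widetilde{{\bf v}}}\,(Q_{AT_{\mathbf{V},{\bf v}},\widetilde{\mathbf{v}}}\,Q\,Q_{A^{\ast}T_{\mathbf{W},{\bf w}},\widetilde{\mathbf{w}}}^{\ast})\,T_{A^{\ast}\mathbf{W},\widetilde{{\bf w}}}^{\ast}=A\,T_{\mathbf{V},{\bf v}}QT^{\ast}_{\mathbf{W},{\bf w}}\,A=\pi_{\Vsb\Wsb^{\perp}}A,
\]
and then $\pi_{\Vsb\Wsb^{\perp}}A-\pi_{\Vsb\Wsb^{\perp}}=\pi_{\Vsb\Wsb^{\perp}}(A-I)=A-I$ (using ${\rm Im}(A-I)\subseteq\Vsb$ and $(\pi_{\Vsb\Wsb^{\perp}})_{|\Vsb}=I_{\Vsb}$), so $\|\pi_{\Vsb\Wsb^{\perp}}A-\pi_{\Vsb\Wsb^{\perp}}\|=\|A-I\|\leq\sum_{n\geq1}\e^{n}=\frac{\e}{1-\e}$. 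Since $(A\mathbf{V},\widetilde{{\bf v}})$ and $(A^{\ast}\mathbf{W},\widetilde{{\bf w}})$ are already fusion frames (for $\Vsb$ and $\Wsb$, by (i) and (ii)), this is the assertion in (iii); the bounded one-sided inverse $\widetilde{Q}$ of $Q$ serves only to guarantee that $Q_{AT_{\mathbf{V},{\bf v}},\widetilde{\mathbf{v}}}QQ_{A^{\ast}T_{\mathbf{W},{\bf w}},\widetilde{\mathbf{w}}}^{\ast}$ is an admissible operator $\mathcal{K}_{A^{\ast}\mathbf{W}}\to\mathcal{K}_{A\mathbf{V}}$ intertwining the two fusion frames.

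The hard part will be the exact identity of part (i): everything hinges on $B$ annihilating $\Wsb^{\perp}$ and having range inside $\Vsb$, which is what lets one replace $T_{\mathbf{V},{\bf v}}QT^{\ast}_{\mathbf{W},{\bf w}}$ by the invertible operator $I-B$ ``on the $\Vsb$ side'' and cancel it against $A=(I-B)^{-1}$; that same range restriction is also what prevents the possibly large norm $\|\pi_{\Vsb\Wsb^{\perp}}\|$ from spoiling the bound $\frac{\e}{1-\e}$ in (iii). A secondary point to handle with care is that $A$ must carry each $V_{i}$ into a closed subspace of $\Vsb$ (needed for the $\mathcal{K}$-space bookkeeping), which is immediate once ${\rm Im}(B)\subseteq\Vsb$ is established.
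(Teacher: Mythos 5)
Your proof is correct, and for parts (i)--(ii) it takes a genuinely more direct route than the paper. The paper obtains the exact identity $T_{A\mathbf{V},\widetilde{\bf v}}Q_{AT_{\mathbf{V},{\bf v}},\widetilde{\mathbf{v}}}QT^{\ast}_{\mathbf{W},{\bf w}}=\pi_{\Vsb\Wsb^{\perp}}$ as a limit: it writes $T_{A_{N}\mathbf{V},\widetilde{\bf v}}Q_{A_{N}T_{\mathbf{V},{\bf v}},\widetilde{\mathbf{v}}}Q=A_{N}T_{\mathbf{V},{\bf v}}Q\to AT_{\mathbf{V},{\bf v}}Q$ and then squeezes $\norm{\pi_{\Vsb\Wsb^{\perp}}-AT_{\mathbf{V},{\bf v}}QT^{\ast}_{\mathbf{W},{\bf w}}}\leq\e^{N+1}+o(1)$ using Theorem~\ref{T better approximation}, letting $N\to\infty$. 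You instead prove $A\,T_{\mathbf{V},{\bf v}}QT^{\ast}_{\mathbf{W},{\bf w}}=\pi_{\Vsb\Wsb^{\perp}}$ in one stroke from the two structural facts ${\rm Im}(B)\subseteq\Vsb$ and $\Wsb^{\perp}\subseteq{\rm Ker}(B)$, which let you cancel $A=(I-B)^{-1}$ against $(I-B)$ on the $\Vsb$ side; this is self-contained (no appeal to the $A_N$ theorem) and makes transparent why the result is exact rather than approximate. Your verification that the image of $T_{A\mathbf{V},\widetilde{\bf v}}$ equals $\Vsb$, upgrading Bessel to fusion frame, is a detail the paper delegates to Remark~\ref{Obs definicion de mdoa}(ii). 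For part (iii) your computation coincides with the paper's: both reduce to $\pi_{\Vsb\Wsb^{\perp}}(A-I)=A-I$ and the Neumann bound $\norm{A-I}\leq\sum_{n\geq 1}\e^{n}=\e/(1-\e)$; and, like the paper, you never actually invoke the hypothesis on $\widetilde{Q}$, which appears to play no role in the argument since the composite operator is well defined without it.
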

\begin{proof}
Note that $T_{A_{N}\mathbf{V},\widetilde{\bf
v}}Q_{A_{N}T_{\mathbf{V},{\bf v}},\widetilde{\mathbf{v}}}Q
= A_N T_{\mathbf{V},{\bf v}} Q \rightarrow A T_{\mathbf{V},{\bf v}}
Q = T_{A\mathbf{V},\widetilde{\bf
v}}Q_{AT_{\mathbf{V},{\bf v}},\widetilde{\mathbf{v}}}Q$ as $
N \rightarrow \infty $. By Theorem~\ref{T better approximation},
\begin{align*}
||\pi_{\Vsb \Wsb^{\bot}}-T_{A\mathbf{V},\widetilde{\bf
v}}Q_{AT_{\mathbf{V},{\bf v}},\widetilde{\mathbf{v}}}QT_{\mathbf{W},{\bf w}}^{ \ast } || \leq & \,||\pi_{\Vsb
\Wsb^{\bot}}-T_{A_{N}\mathbf{V},\widetilde{\bf
v}}Q_{A_{N}T_{\mathbf{V},{\bf v}},\widetilde{\mathbf{v}}}Q
T_{\mathbf{W},{\bf w}}^{ \ast } ||\\
& +||T_{A_{N}\mathbf{V},\widetilde{\bf
v}}Q_{A_{N}T_{\mathbf{V},{\bf v}},\widetilde{\mathbf{v}}}Q
T_{\mathbf{W},{\bf w}}^{ \ast
}-T_{A\mathbf{V},\widetilde{\bf
v}}Q_{AT_{\mathbf{V},{\bf v}},\widetilde{\mathbf{v}}}Q T_{\mathbf{W},{\bf w}}^{ \ast }||\\
\leq & \,\e^{ N+1 } +||T_{A_{N}\mathbf{V},\widetilde{\bf
v}}Q_{A_{N}\mathbf{V},\widetilde{{\bf
v}}}-T_{A\mathbf{V},\widetilde{\bf
v}}Q_{AT_{\mathbf{V},{\bf v}},\widetilde{\mathbf{v}}}Q||\, ||
T_{\mathbf{W},{\bf w}}||
\end{align*}
The right hand side tends to $0$ as $ N \rightarrow \infty $,
 so we have $T_{A\mathbf{V},\widetilde{\bf
v}}Q_{AT_{\mathbf{V},{\bf v}},\widetilde{\mathbf{v}}}QT_{\mathbf{W},  {\bf w}}^{ \ast }=\pi_{\Vsb \Wsb^{\bot}}$. Therefore
$(A\mathbf{V},\widetilde{{\bf v}})$ is an
oblique $Q_{AT_{\mathbf{V},{\bf v}},\widetilde{\mathbf{v}}}Q$-dual fusion frame of $(\mathbf{W},{\bf w})$ in $\Vsb$.
Analogously
$(A^{\ast}\mathbf{W},\widetilde{{\bf w}})$ is an oblique $Q_{A^{\ast}T_{\mathbf{W},{\bf w}},\widetilde{\mathbf{w}}}Q^{\ast}$-dual
fusion frame of $(\mathbf{V},{\bf v})$ in $\Wsb$. Hence, (i) and (ii) hold.

From (i) and $\mathcal{R}(I-A)\subseteq\Vsb$,
\begin{align*}
\pi_{\Vsb \Wsb^{\bot}}-T_{A\mathbf{V},\widetilde{\bf
v}}Q_{AT_{\mathbf{V},{\bf v}},\widetilde{\mathbf{v}}}QQ_{A^{\ast}T_{\mathbf{W},{\bf w}},\widetilde{\mathbf{w}}}^{\ast}
T_{A^{\ast}\mathbf{W},\widetilde{\bf w}}^{ \ast }&=\pi_{\Vsb \Wsb^{\bot}}-A T_{\mathbf{V},{\bf v}} Q
T_{\mathbf{W},{\bf w}}^*A\\&=\pi_{\Vsb
\Wsb^{\bot}}-T_{A\mathbf{V},\widetilde{\bf
v}}Q_{AT_{\mathbf{V},{\bf v}},\widetilde{\mathbf{v}}}Q
T_{\mathbf{W},{\bf w}}^*A \\&= \pi_{\Vsb \Wsb^{\bot}}(I-A)=I-A.
\end{align*}

\noindent So $||\pi_{\Vsb \Wsb^{\bot}}-T_{A\mathbf{V},\widetilde{\bf
v}}Q_{AT_{\mathbf{V},{\bf v}},\widetilde{\mathbf{v}}}QQ_{A^{\ast}T_{\mathbf{W},{\bf w}},\widetilde{\mathbf{w}}}^{\ast}
T_{A^{\ast}\mathbf{W},\widetilde{\bf w}}^{ \ast }|| = \| \sum_{ n=1}^{ \infty } (\pi_{ \Vsb \Wsb^{ \bot } }-T_{\mathbf{V},{\bf v}} Q T_{\mathbf{W},{\bf w}}^{ \ast })^n \| \leq \frac{\e}{1-\e}$. This shows (iii).
\end{proof}
If we apply Proposition~\ref{P A invertible AV Bessel} (or Lemma~\ref{L V Bessel}) and Lemma~\ref{L Q acotada}, we get sufficient conditions on the weights $\widetilde{{\bf v}}$ and $\widetilde{{\bf w}}$ for $(A\mathbf{V},\widetilde{{\bf v}})$ and $(A^{\ast}\mathbf{w},\widetilde{{\bf w}})$ to be fusion Bessel sequences for $\mathcal{V}$ and $Q_{AT_{\mathbf{V},{\bf v}},\widetilde{\mathbf{v}}}$ and
$Q_{A^{\ast}T_{\mathbf{W},{\bf w}},\widetilde{\mathbf{w}}}$ to be bounded operators.
\begin{rem}\label{R Fmonio Gmonio no duales}
From the previous equalities, if $(A\mathbf{V},\widetilde{{\bf v}})$ is an oblique $Q_{AT_{\mathbf{V},{\bf v}},\widetilde{\mathbf{v}}}QQ_{A^{\ast}T_{\mathbf{W},{\bf w}},\widetilde{\mathbf{w}}}^{\ast}$-dual fusion frame
of $(A^{\ast}\mathbf{W},\widetilde{{\bf w}})$ in $\Vsb$, then
$A=I$ and $T_{\mathbf{V},{\bf v}} Q T_{\mathbf{W},{\bf w}}^{ \ast }=\pi_{ \Vsb \Wsb^{ \bot } }$, i.e., $(\mathbf{V},{\bf v})$ is an oblique $Q$-dual fusion frame of $(\mathbf{W},{\bf w})$ in $\Vsb$.
\end{rem}
From Theorem~\ref{T A invertible An AnV} below (or Lemma~\ref{L V Bessel}) and Lemma~\ref{L Q acotada} we obtain sufficient conditions on the weights $\widetilde{\mathbf{v}}$  to assure that $(A_{N}\mathbf{V},\widetilde{{\bf v}})$
is a fusion Bessel sequence for $\mathcal{V}$ and
$Q_{A_{N}T_{\mathbf{V},{\bf v}},\widetilde{\mathbf{v}}}$
is a well defined bounded operator. To prove Theorem~\ref{T A invertible An AnV} we need the following result.
\begin{lem}\label{L A B}
Let $A, B \in B(\mathcal{H},\mathcal{K})$ be such that ${\rm
Ker}(A) \subseteq {\rm Ker}(B)$. Then $\gamma(A)-\gamma(B)\leq ||B - A ||$.
\end{lem}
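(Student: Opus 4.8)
The plan is to argue directly from the definition $\gamma(T)=\inf\{\|Tx\|:\|x\|=1,\ x\in\mathrm{Ker}(T)^{\perp}\}$ and the single hypothesis $\mathrm{Ker}(A)\subseteq\mathrm{Ker}(B)$. The one structural observation needed is that this kernel inclusion is equivalent, upon passing to orthogonal complements, to $\mathrm{Ker}(B)^{\perp}\subseteq\mathrm{Ker}(A)^{\perp}$; this is exactly what makes every vector competing in the infimum for $\gamma(B)$ admissible in the infimum for $\gamma(A)$.

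Concretely, assuming first that $B\neq 0$, I would take an arbitrary unit vector $x\in\mathrm{Ker}(B)^{\perp}\subseteq\mathrm{Ker}(A)^{\perp}$ and estimate
\[
\gamma(A)\ \leq\ \|Ax\|\ \leq\ \|Bx\|+\|(A-B)x\|\ \leq\ \|Bx\|+\|A-B\|,
\]
where the first inequality is the definition of $\gamma(A)$ applied to the admissible vector $x$, and the remaining ones are the triangle inequality and the operator-norm bound together with $\|x\|=1$. Taking the infimum over all such unit vectors $x$ on the right-hand side gives $\gamma(A)\leq\gamma(B)+\|A-B\|$, i.e. $\gamma(A)-\gamma(B)\leq\|B-A\|$.

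It remains to dispose of the degenerate case $B=0$, where $\mathrm{Ker}(B)^{\perp}=\{0\}$ and the above infimum is vacuous: there the claim reduces to $\gamma(A)\leq\|A\|$, which is immediate from the definition of $\gamma$, since $\gamma(A)\leq\|Ax\|\leq\|A\|$ for any admissible unit vector $x$, and $\gamma(A)=0=\|A\|$ when $A=0$ so that no such $x$ exists. I do not expect a genuine obstacle in this lemma; the only points calling for a word of care are precisely this boundary case and the direction of the inclusion of orthogonal complements, both of which are routine.
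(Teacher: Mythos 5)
Your proof is correct and follows essentially the same route as the paper's: both arguments take a unit vector $x\in{\rm Ker}(B)^{\perp}$, use ${\rm Ker}(B)^{\perp}\subseteq{\rm Ker}(A)^{\perp}$ to make it admissible for $\gamma(A)$, apply the triangle inequality to get $\gamma(A)\leq\|Bx\|+\|B-A\|$, and take the infimum over such $x$. Your explicit treatment of the degenerate case $B=0$ is a minor point of extra care that the paper omits.
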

\begin{proof}
We have $|\, ||Bx|| - ||A x|| \,| \leq ||Bx - A x|| \leq
||B - A || \, || x ||$. If $||x|| = 1$, $ ||A x|| - ||B - A ||  \leq ||Bx||$. So,

\centerline{$\text{inf}\{\|Ax\|:\|x\|=1, x \in {\rm Ker}(B)^{\perp}\}-
||B - A || \leq \gamma(B)$.}

Since ${\rm Ker}(A) \subseteq {\rm Ker}(B)$, then
$\text{inf}\{\|Ax\|:\|x\|=1, x \in {\rm Ker}(B)^{\perp}\}\geq \gamma(A)$. Hence we obtain the conclusion.
\end{proof}
\begin{cor}\label{C An A gamma mayor 0}
Let $A_{n}, A \in B(\mathcal{H},\mathcal{K})$ where $A$ is
invertible and $A_{n} \rightarrow A$ as $n \rightarrow \infty$. Then
there exists $N \in \mathbb{N}$ such that if $n \geq N$ then
$\gamma(A_{n}) > 0$.
\end{cor}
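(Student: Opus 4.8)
The plan is to deduce this immediately from Lemma~\ref{L A B} together with the elementary observation that an invertible operator has strictly positive reduced minimum modulus.

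First I would note that since $A$ is invertible it is in particular injective, so $\mathrm{Ker}(A)=\{0\}\subseteq\mathrm{Ker}(A_{n})$ for every $n$. Thus the hypothesis of Lemma~\ref{L A B} is satisfied with the pair $(A,A_{n})$ playing the role of $(A,B)$, and the lemma gives
\[
\gamma(A)-\gamma(A_{n})\leq\|A_{n}-A\|,
\]
equivalently $\gamma(A_{n})\geq\gamma(A)-\|A_{n}-A\|$ for all $n$.

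Next I would use that $A$ has closed (indeed, full) nonzero range, so by the characterization of the reduced minimum modulus recalled in Section~2 we have $\gamma(A)>0$ (in fact $\gamma(A)=\|A^{-1}\|^{-1}$). Since $A_{n}\to A$ in operator norm, there exists $N\in\mathbb{N}$ such that $\|A_{n}-A\|<\gamma(A)$ for all $n\geq N$. Combining this with the inequality above yields $\gamma(A_{n})\geq\gamma(A)-\|A_{n}-A\|>0$ for every $n\geq N$, which is exactly the assertion.

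I do not anticipate any genuine obstacle: the only step needing a moment's attention is checking the kernel-inclusion hypothesis of Lemma~\ref{L A B}, and that is trivial here because $A$ is injective. Everything else is the routine ``small perturbation of an invertible operator'' estimate.
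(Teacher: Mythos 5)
Your proof is correct and follows essentially the same route as the paper's: apply Lemma~\ref{L A B} with $B=A_{n}$ (the kernel inclusion being trivial since $A$ is injective), use $\gamma(A)>0$ for an invertible operator, and choose $N$ with $\|A_{n}-A\|<\gamma(A)$. Your write-up just makes explicit the intermediate inequality $\gamma(A_{n})\geq\gamma(A)-\|A_{n}-A\|$ that the paper leaves implicit.
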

\begin{proof}
If $A$ is invertible then ${\rm Ker}(A)=\{0\}$ and
$\gamma(A)
> 0$. Let $N \in \mathbb{N}$ such that $||A_{n} - A ||
< \gamma(A)$ if $n \geq N$. By Lemma~\ref{L A B}, $\gamma(A_{n})
> 0$ if $n \geq N$.
\end{proof}
\begin{thm}\label{T A invertible An AnV}
Let $(\mathbf{V},{\bf v})$ be a fusion Bessel sequence for $\Vsb$. Given $ n\in\N$, let $A_{n}, A \in B(\mathcal{H})$ where $A$ is
invertible, $A_{n} \rightarrow A$ as $n \rightarrow \infty$, and ${\rm R}(A), {\rm R}(A_{n}) \subseteq \mathcal{V}$. Let $\widetilde{{\bf v}}=\{\widetilde{v}_{k}\}_{k \in I}$ be a family of weights. If there exists $\delta > 0$ such that $\delta \leq v_{i}/\widetilde{v}_{i}$ for all $i \in
I$, then there exists $n_{0} \in \mathbb{N}$ such that
$(A_{n}\mathbf{V},\widetilde{\mathbf{v}})$ is a fusion Bessel sequence for $\mathcal{V}$ for all $n \geq n_{0}$.
\end{thm}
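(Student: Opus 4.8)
The plan is to apply Lemma~\ref{L V Bessel} to the operator $L_{n}:=A_{n}T_{\mathbf{V},{\bf v}}$, which lies in $B(\mathcal{K}_{\mathbf{V}},\mathcal{V})$ because ${\rm R}(A_{n})\subseteq\mathcal{V}$, with $\mathbf{V}$ playing the role of $\mathbf{W}$ and $\widetilde{{\bf v}}$ the role of the family of weights in that lemma. First I would note that $T_{\mathbf{V},{\bf v}}M_{i}(g_{j})_{j\in I}=v_{i}g_{i}$, so $T_{\mathbf{V},{\bf v}}M_{i}\mathcal{K}_{\mathbf{V}}=V_{i}$ and hence $L_{n}M_{i}\mathcal{K}_{\mathbf{V}}=A_{n}V_{i}$; thus the fusion Bessel sequence furnished by Lemma~\ref{L V Bessel} is precisely $(A_{n}\mathbf{V},\widetilde{{\bf v}})$. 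It therefore suffices to produce $\delta'>0$ and $n_{0}\in\mathbb{N}$ such that $\delta'\leq\widetilde{v}_{i}^{-1}\gamma(L_{n}M_{i})$ for all $i\in I$ and all $n\geq n_{0}$.

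To obtain this uniform lower bound on $\gamma(L_{n}M_{i})=\gamma(A_{n}T_{\mathbf{V},{\bf v}}M_{i})$ I would compare $A_{n}$ with the invertible operator $A$. Since $A$ is injective, ${\rm Ker}(AT_{\mathbf{V},{\bf v}}M_{i})={\rm Ker}(T_{\mathbf{V},{\bf v}}M_{i})\subseteq{\rm Ker}(A_{n}T_{\mathbf{V},{\bf v}}M_{i})$, so Lemma~\ref{L A B} applied to the pair $AT_{\mathbf{V},{\bf v}}M_{i}$, $A_{n}T_{\mathbf{V},{\bf v}}M_{i}$ gives
\[
\gamma(A_{n}T_{\mathbf{V},{\bf v}}M_{i})\geq\gamma(AT_{\mathbf{V},{\bf v}}M_{i})-\|(A_{n}-A)T_{\mathbf{V},{\bf v}}M_{i}\|.
\]
Using $\|Ay\|\geq\|A^{-1}\|^{-1}\|y\|$, the injectivity of $A$, and Lemma~\ref{L gamma TVv Mk}, the first term is at least $\|A^{-1}\|^{-1}\gamma(T_{\mathbf{V},{\bf v}}M_{i})=\|A^{-1}\|^{-1}v_{i}$; and $\|(A_{n}-A)T_{\mathbf{V},{\bf v}}M_{i}\|\leq\|A_{n}-A\|\,\|T_{\mathbf{V},{\bf v}}M_{i}\|\leq\|A_{n}-A\|\,v_{i}$. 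Hence $\gamma(L_{n}M_{i})\geq v_{i}\bigl(\|A^{-1}\|^{-1}-\|A_{n}-A\|\bigr)$. The key point is that both estimates carry the common factor $v_{i}$, so the hypothesis $\delta\leq v_{i}/\widetilde{v}_{i}$ will absorb it.

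To finish, I would choose (using $A_{n}\to A$ and the invertibility of $A$) an $n_{0}$ with $\|A_{n}-A\|\leq\frac{1}{2}\|A^{-1}\|^{-1}$ for $n\geq n_{0}$; then $\gamma(L_{n}M_{i})\geq v_{i}/(2\|A^{-1}\|)$ for every $i$, so $\widetilde{v}_{i}^{-1}\gamma(L_{n}M_{i})\geq v_{i}/(2\|A^{-1}\|\widetilde{v}_{i})\geq\delta/(2\|A^{-1}\|)=:\delta'>0$. Lemma~\ref{L V Bessel} then yields that $(A_{n}\mathbf{V},\widetilde{{\bf v}})$ is a fusion Bessel sequence for $\mathcal{V}$ (with upper bound $\|A_{n}T_{\mathbf{V},{\bf v}}\|^{2}/(\delta')^{2}$) for all $n\geq n_{0}$. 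The main obstacle is that the $A_{n}$ need not be invertible, so one cannot directly repeat the argument of Proposition~\ref{P A invertible AV Bessel}; this is exactly what Lemma~\ref{L A B} (in the spirit of Corollary~\ref{C An A gamma mayor 0}) is designed to circumvent, and the only delicate point is keeping the bound uniform in $i$, which succeeds because every relevant quantity is proportional to $v_{i}$.
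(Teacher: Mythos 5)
Your proof is correct and follows essentially the same route as the paper's: both reduce to Lemma~\ref{L V Bessel} via a uniform lower bound on $\gamma(A_{n}T_{\mathbf{V},{\bf v}}M_{i})$ obtained from Lemma~\ref{L A B}, the kernel identity ${\rm Ker}(AT_{\mathbf{V},{\bf v}}M_{i})={\rm Ker}(T_{\mathbf{V},{\bf v}}M_{i})$, and Lemma~\ref{L gamma TVv Mk}, with every term proportional to $v_{i}$ so that $\delta\leq v_{i}/\widetilde{v}_{i}$ yields uniformity in $i$. The only (immaterial) difference is that you bound $\gamma(AT_{\mathbf{V},{\bf v}}M_{i})$ below by $\|A^{-1}\|^{-1}v_{i}$ directly from invertibility, whereas the paper cites \cite[Remark 2.4 (4)]{Ruiz-Stojanoff (2008)} to get the constant $\|A\|^{-1}v_{i}$.
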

\begin{proof}
Note that ${\rm Ker}(AT_{\mathbf{V},{\bf v}} M_i) = {\rm
Ker}(T_{\mathbf{V},{\bf v}} M_i) \subset {\rm
Ker}(A_{n}T_{\mathbf{V},{\bf v}} M_i)$. So, by Lemma~\ref{L A B},

\centerline{$\gamma(AT_{\mathbf{V},{\bf v}} M_i)-
||A_{n}T_{\mathbf{V},{\bf v}} M_i - AT_{\mathbf{V},{\bf v}} M_i||
\leq \gamma(A_{n}T_{\mathbf{V},{\bf v}} M_i)$.}

We have

\centerline{$||A_{n}T_{\mathbf{V},{\bf v}} M_i - AT_{\mathbf{V},{\bf
v}} M_i|| \leq ||A_{n}- A || \, ||T_{\mathbf{V},{\bf v}} M_i||=
v_{i}||A_{n}- A ||$}

\noindent and by \cite[Remark 2.4 (4)]{Ruiz-Stojanoff (2008)} and
Lemma~\ref{L gamma TVv Mk}

\centerline{$\gamma(A T_{\mathbf{V},{\bf v}} M_i) \geq
\|A\|^{-1}\gamma(T_{\mathbf{V},{\bf v}} M_i)=v_{i}\|A\|^{-1}$.}

\noindent Hence

\centerline{$\gamma(A_{n}T_{\mathbf{V},{\bf v}} M_i) \geq
v_{i}(\|A\|^{-1}-||A_{n}- A ||)$.}

Let $\|A\|^{-1} > \delta_{1} > 0$ and $n_{0}$ such that $||A_{n}- A ||
\leq \|A\|^{-1} - \delta_{1}$ if $n \geq n_{0}$. Then

\centerline{$\gamma(A_{n}T_{\mathbf{V},{\bf v}} M_i) \geq \delta_{1}
v_{i}$ if $n \geq n_{0}$.}

\noindent From here, if $n \geq n_{0}$,

\centerline{$\left(\frac{\gamma(A_{n} T_{\mathbf{V},{\bf v}}
M_i)}{\widetilde{v}_{i}}\right)^{2} \geq \left(\frac{v_{i}\delta_{1}}{\widetilde{v}_{i}}\right)^{2}\geq \left(\delta
\delta_{1}\right)^{2}$.}

Consequently, by Lemma~\ref{L V Bessel},
$(A_{n}\mathbf{V},\widetilde{\mathbf{v}})$ is a fusion Bessel
sequence for $\mathcal{V}$ if $n \geq n_{0}$.
\end{proof}

\section*{Acknowledgement}

The research of J. P. D\'{i}az, S. B. Heineken, and P. M. Morillas has been partially
supported by Grants PIP 112-201501-00589-CO (CONICET) and PROIPRO
3-2823 (UNSL). S. B. Heineken also acknowledges the support of
Grants PICT-2011-0436 (UBA) and UBACyT 20020130100422BA.

% ----------------------------------------------------------------
%\bibliographystyle{amsplain}

\end{document}